
%
\documentclass[3p]{elsarticle}
%
%
\usepackage{amsthm}
\usepackage{amsmath}%
\usepackage{amsfonts}%
\usepackage{amssymb}%
\usepackage{graphicx}
\usepackage{mathabx}
\usepackage{enumitem}
\usepackage{bbm}
\usepackage{lipsum}

\usepackage{hyperref}

%
\theoremstyle{remark}
\newtheorem{remark}{Remark}

\theoremstyle{definition}
\newtheorem{definition}{Definition}

\theoremstyle{plain}
\newtheorem{theorem}{Theorem}

\newtheorem{corollary}{Corollary}

\newtheorem{lemma}{Lemma}

\newtheorem{proposition}{Proposition}

\numberwithin{equation}{section}
\numberwithin{theorem}{section}
\numberwithin{proposition}{section}
\numberwithin{lemma}{section}
\numberwithin{corollary}{section}
\numberwithin{remark}{section}
\numberwithin{example}{section}
\numberwithin{definition}{section}

\newcommand{\intt}[1]{\int_0^t#1\;\text{d}s\,}

\newcommand{\norm}[1]{\left\lVert#1\right\rVert}

\journal{Journal of Differential Equations}

\begin{document}
\begin{frontmatter}
\title{Kinetic Relaxation to Entropy Based Coupling Conditions for Isentropic Flow on Networks}
\author{Yannick Holle\fnref{myfootnote}}
\address{Institut f\"ur Mathematik, RWTH Aachen University,\\ Templergraben 55, D-52062 Aachen, Germany}
\ead{holle@eddy.rwth-aachen.de}
\fntext[myfootnote]{This work has been funded by the Deutsche Forschungsgemeinschaft
(DFG, German Research Foundation) Projektnummer 320021702/GRK2326
Energy, Entropy, and Dissipative Dynamics (EDDy). The author would like to thank Michael Herty and Michael Westdickenberg for discussions and bringing the subject to his attention.}
\begin{abstract}
We consider networks for isentropic gas and prove existence of weak solutions for a large class of coupling conditions. First, we construct approximate solutions by a vector-valued BGK model with a kinetic coupling function. Introducing so-called kinetic invariant domains and using the method of compensated compactness justifies the relaxation towards the isentropic gas equations. We will prove that certain entropy flux inequalities for the kinetic coupling function remain true for the traces of the macroscopic solution. These inequalities define the macroscopic coupling condition. Our techniques are also applicable to networks with arbitrary many junctions which may possibly contain circles. We give several examples for coupling functions and prove corresponding entropy flux inequalities. We prove also new existence results for solid wall boundary conditions and pipelines with discontinuous cross-sectional area.
\end{abstract}
\begin{keyword}
hyperbolic conservation laws \sep network \sep coupling condition \sep isentropic gas dynamics \sep BGK model \sep kinetic entropy \sep relaxation limit
\MSC[2010] 35L65\sep 76N15\sep 82C40
\end{keyword}
\end{frontmatter}

\section{Introduction}
\label{sec:Introduction}
This paper considers networks modeled by one dimensional conservation laws which are coupled at a junction. We are especially interested in (isentropic) gas flows in pipeline networks, but there are many other applications for example in traffic, supply chains, data networks or blood circulation. 
This field became of interest of many researchers in the last two decades and was studied in various directions (analysis, numerics, modeling, optimization,...). See for example the overview by Bressan et al. \cite{BCGHP2014}. In this paper we will rigorously prove existence of solutions to the coupled Cauchy problem. We use a kinetic BGK model to construct approximate solutions and justify the limit with the compensated compactness method. The obtained macroscopic solution satisfies inherited entropy flux inequalities at the junction.\medskip\\
Bouchut \cite{Bo1999} introduced a (vector-valued) BGK model relaxing to the isentropic gas equations. We will use this model to construct a sequence of approximate solutions. Berthelin and Bouchut proved the relaxation of finite mass and energy solutions rigorously for initial value problems \cite{BeBo2000,BeBo2002Kinetic} and initial boundary value problems \cite{BeBo2002Boundary}. The construction of BGK solutions is simple and can be done by a characteristics formula and a fixed point argument. We adopt these techniques to networks with a kinetic coupling condition. \\
To justify the relaxation process, we will use Tartar's method of compensated compactness \cite{Ta1979}. The method can be used for strictly hyperbolic conservation laws with a rich family of entropies. DiPerna \cite{Di1983} adopted this technique to the isentropic gas equations which are not strictly hyperbolic in the vacuum. DiPerna's result holds if the finite mass and energy initial data is bounded in $L^\infty$ and the adiabatic exponent is given by $\gamma=1+2/n$, where $n \in \mathbb N_{\ge 3}$ denotes the degrees of freedom of the molecules. In the meantime this result was extended to every $\gamma\ge 1$. We will restrict ourselves to the case $\gamma\in(1,3)$, which contains the cases of air and the shallow water equations. Since the arguments of compensated compactness are local, we can apply a result by Lions, Perthame and Souganidis \cite{LPS1996} separately to every single pipeline.\medskip \\
Network models for the isentropic gas equations were addressed by many researchers \cite{BHK2006II,BHK2006,CoGa2006,HeSe2008}. Most of the results are based on the wave front tracking technique proposed by Dafermos \cite{Da1972}. The first step consists of finding solutions to so-called generalized Riemann problems at the junctions. These Riemann problems can be used to construct solutions to Cauchy problems if the total variation of the initial data is sufficiently small. Notice, that this is a strong restriction to the technique. Furthermore, the front tracking method is not able to handle networks with arbitrary many junctions which may contain circles.\\
There are also some publications which use a kinetic approach to derive coupling conditions for the macroscopic model \cite{BoKl2018Linear,BoKl2018Scalar,BKP2016,HeMo2009}.
Recently, Borsche and Klar studied half-Riemann problems for scalar \cite{BoKl2018Scalar} and linear \cite{BoKl2018Linear} equations with a kinetic approach to derive macroscopic coupling conditions. Their coupling conditions are defined in a more explicit way compared to our conditions, and they are more interested in numerical aspects. On the other hand, coupling conditions introduced by entropy flux inequalities seem to be the more natural choice for analytical considerations.\medskip\\
The most important problem in studying networks is to define the (physically correct) coupling condition. In the case of $BV$-solutions, the trace of the variables $\rho$ and $u$ always exists, and we can simply give explicit conditions for these traces. A natural condition is conservation of mass or equivalently that the mass-in-flux is equal to the mass-out-flux at the junction. One can simply check that this condition is not sufficient to ensure uniqueness of the solution. The most common additional conditions are equality of pressure, momentum flux or the Bernoulli invariant at the junction. As proven by Reigstad \cite{Re2015}, the first two coupling constants do not produce physically correct solutions in the sense that energy could increase at the junction. Equality of the Bernoulli invariants ensures this property, but this condition is not able to explain the Bernoulli principle. Furthermore, all these macroscopic coupling conditions are not able to describe different geometries of the junction.\\
Next, we explain our approach to construct physically correct coupling conditions. First, notice that we cannot ensure existence of boundary traces of $\rho$ and $u$ itself since we consider $L^\infty$-solutions. 
A similar problem appears if one considers initial boundary value problems. Since the seminal paper by Dubois and LeFloch \cite{DuLe1988}, it is a standard approach to define boundary conditions by inequalities for certain entropy fluxes at the boundary. Existence of solutions with these boundary conditions was proven in \cite{BeBo2002Boundary} for the isentropic gas equations. This result motivates to adapt this idea to networks and illustrates why we want to express the coupling condition in terms of entropy flux traces. The conditions are inherited from the coupling condition on the kinetic level. We couple the kinetic BGK solutions by a certain coupling function $\Psi$, which satisfies inequalities for increasing functions of the kinetic entropy flux traces. As for the Godunov scheme \cite{KSX1997}, we can show that the entropy flux traces are lower semi-continuous with respect to the limit $\epsilon\to 0$. Therefore, the entropy flux inequalities remain true for the macroscopic limit.\\
Our main existence result for the macroscopic solution holds for a large class of kinetic coupling functions $\Psi$ with controlled mass and entropy production. This generality can be used to model the geometry and the local behavior of the junction. In particular, we expect that there is no unique physically correct coupling condition. A similar phenomenon appears in the theory of non-conservative products \cite{DLM1995} which can be used to model gas pipelines with discontinuous cross-sectional area \cite{LeTh2003}. We conjecture that a sufficiently large set of entropy flux inequalities at the junction leads to (in some sense) unique solutions. \\
We give some examples for coupling functions and prove corresponding entropy flux inequalities. For example coupling functions given by a convolution operator or given by linear combinations of the incoming data with the same velocity. Furthermore, we get results for solid wall boundary conditions and pipelines with discontinuous cross-sectional area since they are special cases of our setting.\medskip\\
The paper is organized as follows. In the first part, we use very general coupling conditions to prove the main results in Section \ref{sec:MainResults}. In Section \ref{sec:KineticModel}, we introduce the kinetic model and all necessary properties of it. In Section \ref{sec:SolutionToTheBGKModel}, we prove existence for the coupled kinetic BGK equation. In Section \ref{sec:MaximumPrinciple}, we give a maximum principle on the Riemann invariants which is used to justify the limit $\epsilon \to 0$ and to prove the macroscopic boundary conditions in Section \ref{sec:RelaxationtotheMacroscopicLimit}. This finishes the proofs of the main results, and we continue with some examples for coupling functions and prove entropy flux inequalities in Section \ref{sec:Examples}. In Section \ref{sec:ExtensionsandOutlook}, we show how to generalize our results to networks with arbitrary many junctions and give a short outlook for further research.\medskip\\
We finish the introduction with some notation. The natural space to consider kinetic boundary traces is $L^1_\mu$ with the measure $\mathrm d \mu=|\xi|\mathrm d\xi \mathrm d t$. Sometimes we consider locally integrable functions in $x$ in the sense that $f\in L^1((0,\infty)_t\times \Omega_x\times \mathbb R_\xi)$ for every compact set $\Omega\subset (0,\infty)$ and use the simpler notation $f\in L^1((0,\infty)_t\times (0,\infty)_{\mathrm{loc},x}\times \mathbb R_\xi)$. We write $f\in L^1(\Omega)$ for both $f\in L^1(\Omega,\mathbb R)$ and $f\in L^1(\Omega,\mathbb R^2)$. For $f\in L^1(\mathbb R_\xi,\mathbb R^2)$ with $f\in D_\xi^i$ for a.e. $\xi\in\mathbb R$, we write $f\in L^1(\mathbb R_\xi,D^i_\xi)$. Furthermore, we use combinations or small extensions of these notations.
\section{Main Results}
\label{sec:MainResults}
We study gas networks consisting of $d\in\mathbb N$ (infinitely long) pipelines connected by a single junction. Each pipeline is modeled by a one-dimensional half-space solution to the isentropic gas equations
\begin{align}\label{eq:MacroscopicEquation}
\begin{cases}
\partial_t\rho^i+\partial_x(\rho u)^i &=0,\\
\partial_t(\rho u)^i+\partial_x(\rho u^2+\kappa \rho^\gamma)^i &=0,
\end{cases}
\quad\text{ for }t>0,\,x>0, 
\end{align}
with pressure $\rho^i(t,x)\ge 0$, flow velocity $u^i(t,x)\in \mathbb R$ and $\kappa>0,\, 1<\gamma<3$. The cross-section of the $i$-th pipeline is given by $A^i>0$. Bouchut \cite{Bo1999} introduced a semi-kinetic BGK model for the isentropic gas equations given by
\begin{equation}\label{eq:KineticEquation}
\partial_tf^i+\xi\partial_x f^i=\frac{M[f^i]-f^i}{\epsilon},\quad \text{for }t>0,\,x>0,\,\xi\in\mathbb R,
\end{equation}
where $f^i=f^i(t,x,\xi)\in\mathbb R^2$. $M$ is a vector-valued Maxwellian for this system and will be defined later. We ask for solutions to the BGK model satisfying
\begin{equation}
f^i(t,x,\xi)\in D=\{(f_0,f_1)\in \mathbb R^2|f_0>0 \text{ or }f_0=f_1=0\},
\end{equation}
with initial data
\begin{equation}
f^i(0,x,\xi)=f^{0,i}(x,\xi),\quad x>0,\,\xi\in\mathbb R,
\end{equation}
and coupling condition
\begin{align}\label{eq:CouplingCondition}
f^i(t,0,\xi)=\Psi^i[t,f(t,0,\cdot)](\xi),\quad t>0,\,\xi>0.
\end{align}
The coupling function is given by
\begin{align}
\begin{split}
\Psi\colon (0,\infty)\times L^1_\mu((-\infty,0)_{\xi},D)^d&\to  L^1_\mu((0,\infty)_{\xi},D)^d;\\
(t,g)&\mapsto \Psi[t,g],
\end{split}\label{eq:DefPsi}
\end{align}
and satisfies the continuity property:
\begin{equation}
\begin{split}
L_\mu^1((0,\infty)_{\mathrm{loc},t}\times(-\infty,0)_\xi,D)^d &\to L^1_\mu((0,\infty)_{\mathrm{loc},t}\times (0,\infty)_\xi,D)^d;\\
g&\mapsto \big((t,\xi)\mapsto\Psi[t,g(t,\cdot)](\xi)\big)\qquad \text{ is continuous.}
\end{split}\label{eq:ContPsi}
\end{equation}
\begin{theorem}\label{thm:ExistenceKineticModel}
Assume that $f^{0}\in L^1((0,\infty)_x\times \mathbb R_\xi,D)^d$ and
\begin{equation}\label{eq:FiniteEnergy}
\sum_{i=1}^d A^i \iint_{(0,\infty)\times \mathbb R}H(f^{0,i}(x,\xi),\xi)\;\mathrm{d}x\mathrm{d}\xi\,<\infty.
\end{equation}
Let $\Psi$ satisfy (\ref{eq:DefPsi} -- \ref{eq:ContPsi}). Assume that there exist $b_0,b_H\in L^1((0,\infty)_{\mathrm{loc},t},[0,\infty))$ such that for a.e. $t\in(0,\infty)$ 
\begin{align}
\sum_{i=1}^d A^i\int_0^\infty |\xi|\ \Psi^i_0[t,g](\xi)\;\mathrm{d}\xi\,\label{eq:PsiMassConservation}
&\le\sum_{i=1}^d A^i\int_{-\infty}^0|\xi|\ g^i_0(\xi)\;\mathrm{d}\xi +b_0(t),\\
\sum_{i=1}^d A^i\int_0^\infty |\xi|\ H(\Psi^i[t,g](\xi),\xi)\;\mathrm{d}\xi\,\label{eq:PsiEntropyConservation}
&\le\sum_{i=1}^d A^i \int_{-\infty}^0|\xi|\ H(g^i(\xi),\xi)\;\mathrm{d}\xi\,+b_H(t),
\end{align}
for all $g\in L^1_\mu((-\infty,0)_\xi,D)^d$. The function $H\colon D\times\mathbb R\to[0,\infty)$ is the kinetic energy and will be defined in (\ref{eq:defkineticenergy}).
Then, there exists a solution $f=(f^1,...,f^d)$ to (\ref{eq:KineticEquation} -- \ref{eq:CouplingCondition}) satisfying
\begin{gather}
f^i\in C([0,\infty)_t,L^1((0,\infty)_x\times \mathbb R_\xi))\cap C([0,\infty)_x,L_\mu^1((0,\infty)_{\mathrm{loc},t}\times \mathbb R_\xi)),\\
\text{ for any } t\ge 0, f^i(t,x,\xi)\in D\text{ a.e. in }(0,\infty)_x\times \mathbb R_\xi,\\
H(f^i(t,x,\xi),\xi)\in L^\infty((0,\infty)_t,L^1((0,\infty)_x\times\mathbb R_\xi)),\\
\partial_t\left(\int_{\mathbb R}f^i\;\text{d}\xi\right)+\partial_x\left(\int_{\mathbb R}\xi f^i\;\text{d}\xi\right)=0. \label{eq:DifferentialEquationMoments}
\end{gather}
Furthermore, we have for any $t\in [0,\infty)$
\begin{align}
\sum_{i=1}^d A^i\iint_{(0,\infty)\times \mathbb R} f^i_0(t,x,\xi)\;\mathrm{d}x\mathrm{d}\xi
&\le\sum_{i=1}^d A^i\iint_{(0,\infty)\times \mathbb R} f^{0,i}_0(x,\xi)\;\mathrm{d}x\mathrm{d}\xi+\int_0^t b_0(s)\;\text{d}s\label{eq:SolutionMassConservation}\\
\sum_{i=1}^d A^i\iint_{(0,\infty)\times \mathbb R} H(f^i(t,x,\xi),\xi)\;\mathrm{d}x\mathrm{d}\xi
&\le\sum_{i=1}^d A^i\iint_{(0,\infty)\times \mathbb R} H(f^{0,i}(x,\xi),\xi)\;\mathrm{d}x\mathrm{d}\xi+\int_0^t b_H(s)\;\text{d}s.\label{eq:SolutionEntropyConservation}
\end{align}
If we additionally assume equality in (\ref{eq:PsiMassConservation}) for a.e. $t\in(0,\infty)$, we obtain equality in (\ref{eq:SolutionMassConservation}).
\end{theorem}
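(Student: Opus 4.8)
The plan is to fix the relaxation parameter $\epsilon>0$, build the solution of (\ref{eq:KineticEquation})--(\ref{eq:CouplingCondition}) pipeline by pipeline through the method of characteristics, close the junction coupling by a fixed-point argument, and finally read off the mass and energy balances (\ref{eq:SolutionMassConservation})--(\ref{eq:SolutionEntropyConservation}) from the moment equation together with the structural hypotheses on $\Psi$. First I would recast (\ref{eq:KineticEquation}) in mild form: along the characteristic $s\mapsto x+\xi(s-t)$ the equation is an ODE with damping $-f^i/\epsilon$ and source $M[f^i]/\epsilon$, so $f^i(t,x,\xi)$ decomposes into an initial-data term $e^{-t/\epsilon}f^{0,i}(x-\xi t,\xi)$ when the foot reaches $\{t=0\}$, a boundary term $e^{-(t-\tau)/\epsilon}f^i(\tau,0,\xi)$ from the entry time $\tau=t-x/\xi$ when $\xi>0$ and $x<\xi t$, and the relaxation integral $\tfrac1\epsilon\int e^{-(t-s)/\epsilon}M[f^i](s,x-\xi(t-s),\xi)\,\mathrm ds$. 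The decisive geometric fact is that for $\xi<0$ the backward characteristic issued from $(t,0)$ stays in $\{x>0\}$; hence the outgoing trace $f^i(t,0,\cdot)$ on $\{\xi<0\}$ is determined by the initial data and by $M[f^i]$ in the interior alone, and only the incoming part $\{\xi>0\}$ is prescribed by the coupling (\ref{eq:CouplingCondition}).

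This triangular structure motivates a two-level fixed point. With an incoming influx $h=(h^i)_i$ on $\{\xi>0\}$ regarded as given, the $d$ pipelines decouple and each solves an initial-boundary value problem for (\ref{eq:KineticEquation}); since $M$ is Lipschitz on $D$ in the relevant $L^1_\mu$-topology and $D$ is convex and invariant under $M$ (Section \ref{sec:KineticModel}), a contraction on $C([0,T_0],L^1)$ for small $T_0$, as in Berthelin--Bouchut \cite{BeBo2002Boundary}, produces a unique mild solution $f=S[h]$ and its outgoing trace $R[h]^i:=S[h]^i(\cdot,0,\cdot)$ on $\{\xi<0\}$. The coupling condition is then the single equation $h=\Psi[\,\cdot\,,R[h]]$ for the influx. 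Only continuity of $\Psi$ is available by (\ref{eq:ContPsi}), so a contraction is out of reach and I would apply a Schauder-type theorem: $R$ is continuous by continuous dependence of the mild solution on its boundary data, and since (\ref{eq:PsiMassConservation})--(\ref{eq:PsiEntropyConservation}) are postulated for every $g$, the image $\Psi[\,\cdot\,,R[h]]$ lies in $D$ and obeys the flux bounds irrespective of $h$, so the a priori estimates below fix a bounded convex set that the map preserves. The asserted regularity—continuity of $t\mapsto f^i$ in $L^1$ and of $x\mapsto f^i$ in $L^1_\mu$ (whence the trace), pointwise membership in $D$, and $H(f^i,\xi)\in L^\infty_tL^1$—then follows from the mild formula, the invariance of $D$, and the energy bound.

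The a priori estimates are the computational core and yield (\ref{eq:SolutionMassConservation})--(\ref{eq:SolutionEntropyConservation}) at once. Integrating the moment identity (\ref{eq:DifferentialEquationMoments}) over $x\in(0,\infty)$, with the flux vanishing as $x\to\infty$ by finite mass, leaves the boundary flux $\int_{\mathbb R}\xi f^i_0(t,0,\xi)\,\mathrm d\xi$; splitting into $\xi>0$ and $\xi<0$, inserting the coupling on $\{\xi>0\}$ and writing $\xi=-|\xi|$ on $\{\xi<0\}$, the $A^i$-weighted sum is precisely the incoming minus outgoing mass flux in (\ref{eq:PsiMassConservation}), so $\frac{\mathrm d}{\mathrm dt}\sum_i A^i\iint_{(0,\infty)\times\mathbb R}f^i_0\,\mathrm dx\,\mathrm d\xi\le b_0(t)$, and integration gives (\ref{eq:SolutionMassConservation}); with equality in (\ref{eq:PsiMassConservation}) this chain is an equality. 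For the energy I would test (\ref{eq:KineticEquation}) with $\partial_f H(f^i,\xi)$, justified by the convexity of $H$: the transport part integrates to $\sum_i A^i\int_{\mathbb R}\xi H(f^i(t,0,\xi),\xi)\,\mathrm d\xi$, bounded by $b_H(t)$ via the same splitting and (\ref{eq:PsiEntropyConservation}), while the collision part is nonpositive since $M[f]$ minimizes $\int H(\cdot,\xi)\,\mathrm d\xi$ at fixed moments, whence convexity gives $\int\partial_f H(f)\cdot(M[f]-f)\,\mathrm d\xi\le\int\big(H(M[f],\xi)-H(f,\xi)\big)\,\mathrm d\xi\le0$; thus $\frac{\mathrm d}{\mathrm dt}\sum_i A^i\iint_{(0,\infty)\times\mathbb R}H(f^i,\xi)\,\mathrm dx\,\mathrm d\xi\le b_H(t)$ and (\ref{eq:SolutionEntropyConservation}) follows. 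These same bounds let me extend $S[h]$ from $[0,T_0]$ to all $t\ge0$.

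I expect the main obstacle to be the compactness required for the Schauder step rather than the estimates. Because the velocity $\xi$ is unbounded there is no finite speed of propagation to exploit, so compactness of $h\mapsto\Psi[\,\cdot\,,R[h]]$ must come from elsewhere: the relaxation integral $\tfrac1\epsilon\int_0^t e^{-(t-s)/\epsilon}(\cdots)\,\mathrm ds$ acts as a mollifier in $t$ and yields equicontinuity in time, while the regularity of $M$ in $\xi$ together with the uniform bounds furnished by the kinetic invariant domains of Section \ref{sec:MaximumPrinciple} control the $\xi$-behavior; reconciling these to produce a genuinely compact map on the invariant convex set, so that the merely continuous coupling $\Psi$ admits a fixed point, is the technical heart of the construction.
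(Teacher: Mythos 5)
Your overall architecture (Duhamel formula along characteristics, a topological fixed point, a priori mass/energy bounds extracted from (\ref{eq:PsiMassConservation})--(\ref{eq:PsiEntropyConservation}), and the observation that the outgoing trace on $\{\xi<0\}$ is determined by the interior so that the coupling has a triangular structure) matches the paper. But two steps would fail as written. First, your inner fixed point rests on the claim that $M$ is Lipschitz on $D$ in $L^1_\mu$, so that each pipeline's IBVP with prescribed influx $h$ is solved by contraction, yielding a \emph{unique} mild solution $S[h]$ and hence a well-defined outer map $h\mapsto\Psi[\cdot,R[h]]$. The Maxwellian $\chi(\rho,\xi-u)=c_{\gamma,\kappa}(a_\gamma^2\rho^{\gamma-1}-(\xi-u)^2)_+^\lambda$ is only H\"older (not Lipschitz) in the moments near vacuum; this is precisely why Berthelin--Bouchut, and the paper, treat the BGK nonlinearity itself by a Schauder-type argument. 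The paper's map $F$ sends a source candidate $g$ to the solution of the \emph{linear} transport problem with source $M[g]$, in which the coupling is resolved exactly (the outgoing trace for $\xi<0$ depends only on $f^0$ and $M[g]$, so $\Psi$ applied to it closes the boundary data for $\xi>0$); there is a single Tychonoff--Schauder fixed point over $g$, not a two-level scheme. If you replace your inner contraction by Schauder you lose uniqueness of $S[h]$ and the outer map is no longer single-valued, so the two-level structure does not survive the repair.

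Second, you correctly identify compactness as the crux but point to the wrong sources: the time-mollification by $\tfrac1\epsilon\int e^{-(t-s)/\epsilon}\cdots\,\mathrm ds$ gives no compactness in $(x,\xi)$, and the kinetic invariant domains of Section \ref{sec:MaximumPrinciple} are \emph{not} hypotheses of Theorem \ref{thm:ExistenceKineticModel} (they enter only in Theorem \ref{thm:ExistenceMacroscopicEquation}), so no $L^\infty$ control is available here. The paper obtains compactness from the velocity averaging lemma of Golse--Lions--Perthame--Sentis applied on each pipeline: condition (\ref{con:C4}) puts $\partial_t g_n+\xi\partial_x g_n+g_n/\epsilon$ in $C/\epsilon$, the energy bound gives equi-integrability via Lemma \ref{lemma:EstimatesHandf} and Dunford--Pettis, and averaging upgrades weak convergence of $g_n$ to strong $L^1_{\mathrm{loc}}$ convergence of the moments $(\rho_n,\rho_nu_n)$, which by the stability proposition yields relative compactness of $F(\tilde C)$ in $C([0,T],L^1_{\mathrm{loc}})^d$. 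Without this ingredient your Schauder step has no compact map. A smaller remark: your energy balance tests the equation with $H'(f,\xi)$, which requires a renormalization/chain-rule justification for merely $L^1$ solutions; the paper avoids this in the existence proof by applying Jensen's inequality directly to the Duhamel formula (estimates (\ref{eq:EstimateHwithBC1})--(\ref{eq:EstimateHwithBC3})) together with the entropy minimization principle, which is the cleaner route at this stage.
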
 
\begin{remark}
In (\ref{eq:PsiMassConservation}), the function $b_0$ controls the local mass production at the junction. More precisely, the mass leaving the junction is bounded by the mass entering the junction plus the bound on the mass production $b_0$. In the physically relevant case, we expect $b_0=0$ since this implies that no mass is produced at the junction. We use this local estimate to prove the global mass estimate in (\ref{eq:SolutionMassConservation}). Similarly, we use (\ref{eq:PsiEntropyConservation}) with the kinetic energy functional $H$ to obtain a global estimate on the kinetic energy in (\ref{eq:SolutionEntropyConservation}).\\
Notice that we could use similar assumptions with other (symmetric) kinetic entropy functions to obtain similar global bounds on the kinetic entropy in the network. In particular, we get equality in (\ref{eq:SolutionMassConservation}) if we assume equality in (\ref{eq:PsiMassConservation}) and use $H_{-\mathbbm 1}(f,\xi)=-f_0$.
\end{remark}
In the next step, we want to take the limit $\epsilon\to 0$ to obtain a macroscopic solution to the isentropic gas equations (\ref{eq:MacroscopicEquation}). As usual, we ask for an entropy solution to (\ref{eq:MacroscopicEquation}) which additionally satisfies 
\begin{equation}\label{eq:EntropySolution}
\partial_t(\eta_S(\rho^i,u^i))+\partial_x(G_S(\rho^i,u^i))\le 0 \quad\text{in }(0,\infty)_t\times(0,\infty)_x
\end{equation}
for entropy pairs $(\eta_S,G_S)$ parametrized by a convex function $S\colon \mathbb R\to\mathbb R$ of class $C^1$ with $|S(v)|\le B(1+v^2)$ for a constant $B>0$.\\
To justify the limit, we will need uniform $L^\infty$-bounds on the solutions which can be obtained by a maximum principle for the (kinetic) Riemann invariants. We introduce the family of kinetic invariant domains ($\tilde D_\xi^1,\dots,\tilde D_\xi^d)$ by
\begin{equation}
\tilde D_\xi^i=\{f\in D;\,f=0 \text{ or } \omega^i_{\min}\le\omega_1(f,\xi)\le\omega_2(f,\xi)\le \omega^i_{\max}\}.
\end{equation}
We assume $f^{0,i}\in L^1((0,\infty)_x\times\mathbb R_\xi,\tilde D_\xi^i)$ and for a.e. $t\in(0,\infty)$ 
\begin{gather}\label{eq:KineticInvariancePsi}
\begin{split}
\sum_{i=1}^d A^i\int_0^\infty |\xi|\ H_{S_\omega^i}(\Psi^i[t,g](\xi),\xi)\;\mathrm{d}\xi\,
\le\sum_{i=1}^d A^i \int_{-\infty}^0|\xi|\ H_{S_\omega^i}(g^i(\xi),\xi)\;\mathrm{d}\xi,\\
\text{for all $g\in L^1_\mu((-\infty,0)_\xi,D)^d$, where $S_\omega^i(v)=(v-\omega_{\max}^i)_+^2+(\omega_{\min}^i-v)_+^2$}.
\end{split}
\end{gather}
This assumption implies $f\in \tilde D_\xi^i$ a.e. $t,x,\xi$ and leads to the uniform $L^\infty$-bounds (see Theorem \ref{thm:MaximumPrinciple}).
\begin{theorem}\label{thm:ExistenceMacroscopicEquation}
Let $f_\epsilon$ be the solution obtained in Theorem \ref{thm:ExistenceKineticModel} with initial data $f^{0,i}\in L^1((0,\infty)_x\times \mathbb R_\xi,\tilde D_\xi^i)$ satisfying (\ref{eq:FiniteEnergy}) and coupling function $\Psi$ satisfying (\ref{eq:DefPsi}), (\ref{eq:ContPsi}) and (\ref{eq:KineticInvariancePsi}) for some $-\infty<\omega_{\min}^i<\omega_{\max}^i<\infty$. Then $(\rho_\epsilon^i,\rho^i_\epsilon u^i_\epsilon)(t,x)=\int_\mathbb R f^i_{\epsilon}(t,x,\xi) \text{d}\xi$ are uniformly bounded in $L^\infty((0,\infty)_t\times (0,\infty)_x)$. After passing if necessary to a subsequence, $(\rho_\epsilon^i,\rho_\epsilon^i u_\epsilon^i)$ converge a.e. in $(0,\infty)_t\times (0,\infty)_x$ to an entropy solution $(\rho^i,\rho^i u^i)$ to (\ref{eq:MacroscopicEquation}), (\ref{eq:EntropySolution}) remaining in $\tilde D^i$ with initial data $(\rho^{0,i},\rho^{0,i} u^{0,i})=\int_\mathbb R f^{0,i}\text{d}\xi$. Furthermore, after passing if necessary to a subsequence again,
\begin{equation}
\overline{G_S(\rho^i,u^i)}(t,0)\le \psi_S^i(t):=\underset{\epsilon\to 0}{\operatorname{w*-lim}}\int_\mathbb R \xi\,H_S(f_\epsilon(t,0,\xi),\xi)\;\text{d}\xi
\end{equation}
a.e. $t>0$, where $S\colon \mathbb R\to\mathbb R$ is convex, of class $C^1$ and $|S(v)|\le B(1+v^2)$ for a constant $B$. 
In particular, $\overline{G_S(\rho^i,u^i)}(t,0)$ and $\psi_S^i(t)$ are bounded in $L^\infty_t(0,\infty)$.
\end{theorem}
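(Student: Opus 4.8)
The plan is to establish the three assertions in the order stated: the uniform $L^\infty$-bound, the relaxation limit on each pipeline, and finally the inherited entropy flux inequality at the junction.

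First I would obtain the uniform bound from the kinetic invariant domains. The assumption (\ref{eq:KineticInvariancePsi}) is tailored precisely so that the coupling function does not increase the boundary entropy associated with the convex functions $S_\omega^i$; combined with the maximum principle of Theorem \ref{thm:MaximumPrinciple}, this forces $f_\epsilon^i(t,x,\cdot)\in\tilde D_\xi^i$ for a.e. $t,x$, uniformly in $\epsilon$. Since membership in $\tilde D_\xi^i$ confines the kinetic Riemann invariants $\omega_1,\omega_2$ between $\omega_{\min}^i$ and $\omega_{\max}^i$, the macroscopic Riemann invariants of $(\rho_\epsilon^i,\rho_\epsilon^i u_\epsilon^i)=\int_{\mathbb R}f_\epsilon^i\,\mathrm d\xi$ inherit the same bounds, which yields the uniform $L^\infty$-bound and keeps the moments inside $\tilde D^i$.

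Second I would pass to the relaxation limit separately on each pipeline. Because the arguments of compensated compactness are purely local, I can apply the Lions--Perthame--Souganidis framework \cite{LPS1996} on $(0,\infty)_t\times(0,\infty)_x$ for each fixed $i$. The two ingredients are: the moment identity (\ref{eq:DifferentialEquationMoments}) together with the relaxation estimate $f_\epsilon-M[f_\epsilon]\to 0$ (controlled through the kinetic energy dissipation in $\epsilon$), which guarantees that any limit of the moments solves (\ref{eq:MacroscopicEquation}); and the $H^{-1}_{\mathrm{loc}}$-compactness of the entropy production $\partial_t\eta_S(\rho_\epsilon^i,u_\epsilon^i)+\partial_x G_S(\rho_\epsilon^i,u_\epsilon^i)$, which follows from integrating the kinetic entropy inequality in $\xi$ and invoking the uniform bounds. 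The $L^\infty$-bound from the first step then permits extraction of an a.e. convergent subsequence whose limit $(\rho^i,\rho^i u^i)$ is an entropy solution of (\ref{eq:MacroscopicEquation}), (\ref{eq:EntropySolution}) remaining in $\tilde D^i$, with the prescribed initial data.

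Third, I would derive the junction inequality. The kinetic entropy inequality, coming from the minimisation property of the Maxwellian, reads $\partial_t H_S(f_\epsilon,\xi)+\xi\,\partial_x H_S(f_\epsilon,\xi)\le 0$; integrating in $\xi$ and testing against a nonnegative $\phi(t,x)$ supported near $x=0$ produces the boundary contribution $\int_0^\infty\!\int_{\mathbb R}\xi\,H_S(f_\epsilon(t,0,\xi),\xi)\,\phi(t,0)\,\mathrm d\xi\,\mathrm dt$. The kinetic traces exist by the continuity statement of Theorem \ref{thm:ExistenceKineticModel} and are uniformly bounded, so after a further subsequence they converge weak-$*$ to $\psi_S^i$. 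In the interior, the equilibrium relations $\int_{\mathbb R}H_S(M[\rho,\rho u],\xi)\,\mathrm d\xi=\eta_S$ and $\int_{\mathbb R}\xi\,H_S(M[\rho,\rho u],\xi)\,\mathrm d\xi=G_S$ identify the bulk limit with the macroscopic entropy pair, so that $(\eta_S,G_S)(\rho^i,u^i)$ is a divergence-measure field admitting a weak normal trace $\overline{G_S(\rho^i,u^i)}(t,0)$; comparing the limiting inequality with this intrinsic trace gives $\overline{G_S(\rho^i,u^i)}(t,0)\le\psi_S^i(t)$.

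The hard part is this last step: since the solutions are merely $L^\infty$, the macroscopic flux trace exists only weakly, and one must establish a lower semi-continuity of the kinetic entropy flux trace under $\epsilon\to 0$, in the spirit of the Godunov-scheme analysis of \cite{KSX1997}. The sign change of the weight $\xi$ across the junction (incoming $\xi<0$ versus outgoing $\xi>0$ data) forces a careful treatment of the two half-lines and an exploitation of the convexity of $H_S$ to pass from the non-equilibrium boundary values $f_\epsilon(t,0,\cdot)$ to the equilibrium flux $G_S$; the uniform confinement to $\tilde D_\xi^i$ supplies the integrability needed to make these limits rigorous and to conclude the asserted $L^\infty_t$-bounds on both $\overline{G_S(\rho^i,u^i)}(t,0)$ and $\psi_S^i$.
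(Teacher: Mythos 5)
Your proposal is correct and follows essentially the same route as the paper: kinetic invariant domains and the maximum principle for the uniform $L^\infty$-bounds, Lions--Perthame--Souganidis compensated compactness applied pipeline by pipeline, and passage to the limit in the weak entropy formulation with boundary terms combined with the divergence-measure-field trace theorem. One small imprecision: the kinetic entropy inequality $\partial_t H_S(f_\epsilon,\xi)+\xi\,\partial_x H_S(f_\epsilon,\xi)\le 0$ does not hold pointwise in $\xi$ --- the BGK collision term $H_S'(f_\epsilon,\xi)\cdot(M[f_\epsilon]-f_\epsilon)/\epsilon$ is only signed after subtracting $T_S(\rho_\epsilon,u_\epsilon)\cdot(M[f_\epsilon]-f_\epsilon)/\epsilon$, whose $\xi$-integral vanishes --- but since you integrate in $\xi$ immediately, this does not affect the argument.
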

\begin{corollary}\label{cor:MonotonicityGammaLimit}
Let all assumptions in Theorem \ref{thm:ExistenceMacroscopicEquation} be satisfied. Let $p\in \mathbb N$, $i_l\in\{1,\dots,d\}$, $S_l\colon\mathbb R\to\mathbb R$ convex, of class $C^1$ with $|S_l(v)|\le B_l(1+v^2)$, $l=1,\dots,p$.
Let $\Gamma\colon (0,\infty)_t\times \mathbb R^p\to \mathbb R$ be such that $\Gamma[t,\cdot]$ is uniformly bounded on compact sets and increasing in every argument with $S_l\notin \operatorname{span}\{\mathbbm 1,v\}$. Then, 
\begin{equation*}\label{eq:DefboundbS}
\Gamma[t,\overline{G_{S_1}(\rho^{i_1},u^{i_1})}(t,0),\dots,\overline{G_{S_p}(\rho^{i_p},u^{i_p})}(t,0)]\le \Gamma[t,\psi_{S_1}^{i_1}(t),\dots,\psi_{S_p}^{i_p}(t)]\le b_{\Gamma, \mathcal S}(t) \quad \text{a.e. }t>0,
\end{equation*}
where $b_{\Gamma, \mathcal S}\in L^\infty_t(0,\infty)$ depend only on $\Psi$.
\end{corollary}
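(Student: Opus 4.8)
The plan is to prove the two inequalities separately: the left one is a direct consequence of the monotonicity of $\Gamma$ together with the trace inequality of Theorem \ref{thm:ExistenceMacroscopicEquation}, while the right one rests on a bound for the kinetic entropy fluxes $\psi^{i_l}_{S_l}$ that is uniform in $\epsilon$ and in the initial data and hence depends only on $\Psi$. For the left inequality, Theorem \ref{thm:ExistenceMacroscopicEquation} supplies $\overline{G_{S_l}(\rho^{i_l},u^{i_l})}(t,0)\le\psi^{i_l}_{S_l}(t)$ for a.e. $t>0$ and each $l=1,\dots,p$; here the hypothesis $S_l\notin\operatorname{span}\{\mathbbm 1,v\}$ guarantees that these are genuinely convex entropies, to which the theorem applies in its non-trivial form. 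Since $\Gamma[t,\cdot]$ is increasing in each of its $p$ arguments, I would raise the arguments one at a time, obtaining $\Gamma[t,\overline{G_{S_1}(\rho^{i_1},u^{i_1})}(t,0),\dots]\le\Gamma[t,\psi^{i_1}_{S_1}(t),\dots,\psi^{i_p}_{S_p}(t)]$ for a.e. $t$, the conclusion being a finite intersection of almost-everywhere statements.

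The core of the argument is to bound each $\psi^{i_l}_{S_l}$ by a constant determined solely by the invariant domain. By the maximum principle of Theorem \ref{thm:MaximumPrinciple}, the approximations satisfy $f_\epsilon(t,0,\cdot)\in\tilde D^{i_l}_\xi$ for a.e. $(t,\xi)$, uniformly in $\epsilon$ and independently of the particular initial datum, which is only required to lie in the same domain. Exploiting the structure of $\tilde D^{i_l}_\xi$---the localization of the kinetic support to $\xi\in[\omega^{i_l}_{\min},\omega^{i_l}_{\max}]$ and the resulting pointwise control of $H_{S_l}$---I would estimate $\int_{\mathbb R}\xi\,H_{S_l}(f_\epsilon(t,0,\xi),\xi)\,\mathrm d\xi\le\tilde C_l$ with $\tilde C_l=\tilde C_l(S_l,\omega^{i_l}_{\min},\omega^{i_l}_{\max})$. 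Because the one-sided bound $\le\tilde C_l$ is preserved under the weak-$*$ limit defining $\psi^{i_l}_{S_l}$ (the cone $\{g\le\tilde C_l\}$ being weak-$*$ closed in $L^\infty_t$), this yields $\psi^{i_l}_{S_l}(t)\le\tilde C_l$ for a.e. $t$, a bound depending only on $\Psi$ through the parameters fixed in (\ref{eq:KineticInvariancePsi}).

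Combining the two steps, monotonicity of $\Gamma$ gives $\Gamma[t,\psi^{i_1}_{S_1}(t),\dots,\psi^{i_p}_{S_p}(t)]\le\Gamma[t,\tilde C_1,\dots,\tilde C_p]=:b_{\Gamma,\mathcal S}(t)$, and $b_{\Gamma,\mathcal S}\in L^\infty_t(0,\infty)$ since $\Gamma$ is uniformly bounded on compact sets; by construction $b_{\Gamma,\mathcal S}$ depends only on $\Psi$ and on the chosen entropies $\mathcal S=\{S_l\}$. I expect the main obstacle to be the uniform estimate of the second step: extracting from membership in $\tilde D^{i_l}_\xi$ a bound on $\int_{\mathbb R}\xi\,H_{S_l}$ that is independent of $\epsilon$ and of the initial data requires the explicit form of the kinetic entropy $H_S$ and the precise geometry of the invariant domain, and it is exactly here that the maximum principle feeds into the bound on $\psi^{i_l}_{S_l}$.
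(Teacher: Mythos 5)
Your overall route is the same as the paper's: the left inequality comes from the trace inequality $\overline{G_{S_l}(\rho^{i_l},u^{i_l})}(t,0)\le\psi^{i_l}_{S_l}(t)$ of Proposition \ref{prop:InequalityTraces} (equivalently Theorem \ref{thm:ExistenceMacroscopicEquation}) plus monotonicity of $\Gamma$, and the right inequality comes from a uniform bound on the kinetic entropy flux traces. Your second step is essentially a re-derivation of Proposition \ref{prop:ExistencebSforLinftysolutions}: since $f_\epsilon(t,0,\cdot)\in\tilde D^{i}_\xi$ a.e. (maximum principle), $H_{S_l}$ is uniformly bounded there and vanishes for $\xi\notin[\omega^{i}_{\min},\omega^{i}_{\max}]$, so $|\psi^{i_l}_{S_l,\epsilon}|\le\tilde C_l$ with $\tilde C_l$ depending only on the invariant domain and $S_l$, and one-sided bounds survive the weak-$*$ limit. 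That is exactly how the paper obtains $b_{\mathcal S}$, and your observation that $\tilde C_l$ depends on $\Psi$ only through the $\omega^{i}_{\min},\omega^{i}_{\max}$ fixed in (\ref{eq:KineticInvariancePsi}) is consistent with the paper's own remark that $b_{\Gamma,\mathcal S}$ depends strongly on the choice of invariant domain.

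There is one genuine slip in your first step. You write ``Since $\Gamma[t,\cdot]$ is increasing in each of its $p$ arguments'' and interpret the hypothesis $S_l\notin\operatorname{span}\{\mathbbm 1,v\}$ as merely ensuring that the entropies are ``genuinely convex.'' In fact the hypothesis only requires $\Gamma$ to be increasing in those arguments $l$ for which $S_l\notin\operatorname{span}\{\mathbbm 1,v\}$; for an argument with $S_l\in\operatorname{span}\{\mathbbm 1,v\}$ no monotonicity is assumed, and your ``raise the arguments one at a time'' step fails there. The missing ingredient is the equality clause of Proposition \ref{prop:InequalityTraces}: for $S\in\{\mathbbm 1,v\}$ one has $\overline{G_S(\rho^i,u^i)}(t,0)=\psi^i_S(t)$, and since $S\mapsto G_S$ and $S\mapsto\psi_S$ are linear in $S$, equality extends to all $S_l\in\operatorname{span}\{\mathbbm 1,v\}$. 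For those slots the two arguments of $\Gamma$ coincide, so no monotonicity is needed, and the one-at-a-time argument goes through for the remaining slots. With this correction your proof is complete and matches the paper's.
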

\section{Basic Properties of the BGK Model}
\label{sec:KineticModel}
In this section, we recall several properties of the BGK model for isentropic gas. The section is based on \cite{BeBo2000,BeBo2002Kinetic,BeBo2002Boundary} and all proofs are given there. Almost all results in this section are point-wise or independent of the coupling condition. Therefore, we restrict ourselves to the case $d=1$ and omit the indices. The Maxwellian is given by
\begin{equation}
M[f](t,x,\xi)=M(\rho(t,x),u(t,x),\xi)
\end{equation}
with
\begin{equation}\label{eq:DefinitionrhouMoments}
\rho(t,x)=\int_\mathbb R f_0(t,x,\xi)\;\text{d}\xi\,,\quad \rho(t,x)u(t,x)=\int_\mathbb R f_1(t,x,\xi)\;\text{d}\xi\,
\end{equation}
and
\begin{gather}
M(\rho,u,\xi)=(\chi(\rho,\xi-u),((1-\theta)u+\theta \xi)\chi(\rho,\xi-u)),\\
\chi(\rho,\xi)=c_{\gamma,\kappa}(a_\gamma^2\rho^{\gamma-1}-\xi^2)_+^\lambda,\\
\theta=\frac{\gamma-1}{2},\quad\lambda=\frac{1}{\gamma-1}-\frac{1}{2},\quad c_{\gamma,\kappa}=\frac{a_\gamma^{-2/(\gamma-1)}}{J_\lambda},\\
J_\lambda=\int_{-1}^1(1-z^2)^\lambda\;\text{d}z\,=\frac{\sqrt{\pi}\Gamma(\lambda+1)}{\Gamma(\lambda+3/2)},\quad a_\gamma=\frac{2\sqrt{\gamma\kappa}}{\gamma-1}.
\end{gather}
The Maxwellian satisfies the following moment properties
\begin{align}
\int_\mathbb R M(\rho,u,\xi)\;\text{d}\xi&=(\rho,\rho u),\label{eq:0thMomentMaxwellian}\\
\int_\mathbb R \xi M(\rho,u,\xi)\;\text{d}\xi&=(\rho u,\rho u^2+\kappa \rho^\gamma)=F(\rho,u),\label{eq:1stMomentMaxwellian}
\end{align}
for every $\rho\ge 0$ and $u\in\mathbb R$. A useful property of the isentropic gas equations is the huge class of entropies parametrized by convex functions $S\colon\mathbb R\to\mathbb R$. The kinetic entropies are defined by 
\begin{align}
H_S(f,\xi)&=\int_\mathbb R \Phi(\rho(f,\xi),u(f,\xi),\xi,v)S(v)\;\text{d}v\quad \text{for }f\neq 0,\\
H_S(0,\xi)&=0,
\end{align}
where
\begin{align}
u(f,\xi)&=\frac{f_1/f_0-\theta \xi}{1-\theta},\\
\rho(f,\xi)&=a_\gamma^{-\tfrac{2}{\gamma-1}}\left(\left(\frac{f_1/f_0-\xi}{1-\theta}\right)^2+\left(\frac{f_0}{c_{\gamma,\kappa}}\right)^{1/\lambda}\right)^{\tfrac{1}{\gamma-1}},
\end{align}
is the inverse relation to $f=M(\rho,u,\xi)$. The kernel $\Phi$ is defined by
\begin{gather}
\Phi(\rho,u,\xi,v)=\frac{(1-\theta)^2}{\theta}\frac{c_{\gamma,\kappa}}{J_\lambda}\mathbbm{1}_{\omega_1<\xi<\omega_2}\mathbbm{1}_{\omega_1<v<\omega_2}|\xi-v|^{2\lambda-1}\Upsilon_{\lambda-1}(z),\\
z=\frac{(\xi+v)(\omega_1+\omega_2)-2(\omega_1\omega_2+\xi v)}{(\omega_2-\omega_1)|\xi-v|},\\
\Upsilon_{\lambda-1}(z)=\int_1^z (y^2-1)^{\lambda -1}\;\text{d}y,\quad z\ge 1.
\end{gather}
$\Phi$ is symmetric in $\xi,v$, satisfies $\Phi\ge 0$ and $\int_{\mathbb R}(1,v)\Phi(\rho,u,\xi,v)\;\text{d}v=M(\rho,u,\xi)$. The macroscopic entropy and entropy flux are given by
\begin{align}
\eta_S(\rho,u)&=\int_{\mathbb R}\chi(\rho,v-u) S(v)\;\text{d}v=\int_{\mathbb R} H_S(M(\rho,u,\xi),\xi)\;\text{d}\xi,\\
G_S(\rho,u)&=\int_\mathbb R [(1-\theta)u+\theta v]\ \chi(\rho,v-u)S(v)\;\text{d}v\\
&=\int_\mathbb R\xi H_S(M(\rho,u,\xi),\xi)\;\text{d}\xi.
\end{align}
The kinetic entropy parametrized by $S(v)=v^2/2$ is given by
\begin{equation}\label{eq:defkineticenergy}
H(f,\xi)=\frac{\theta}{1-\theta}\frac{\xi^2}{2} f_0+\frac{\theta}{2c_{\gamma,\kappa}^{1/\lambda}}\frac{f_0^{1+1/\lambda}}{1+1/\lambda}
+\frac{1}{1-\theta}\frac{1}{2}\frac{f_1^2}{f_0}-\frac{\theta}{1-\theta}\xi f_1,
\end{equation}
and the corresponding macroscopic entropy is the physical energy
\begin{align}
\eta(\rho,u)=\frac{\rho u^2}{2}+\frac{\kappa}{\gamma-1}\rho^\gamma,\quad G(\rho,u)=\frac{\rho u^3}{2}+\frac{\gamma\kappa}{\gamma-1}\rho^\gamma u.
\end{align}
The isentropic gas equations admit the Riemann invariants 
\begin{align}
\omega_1&=u-a_\gamma\rho^\theta,\quad\omega_2=u+a_\gamma\rho^\theta,
\end{align}
for $\rho\neq 0$. A kinetic version of them is given by
\begin{align}
\omega_1&=u(f,\xi)-a_\gamma\rho(f,\xi)^\theta,\quad\omega_2=u(f,\xi)+a_\gamma\rho(f,\xi)^\theta,
\end{align}
for $f\neq 0$. We recall several properties of the previous definitions:
\begin{lemma}[{\cite[Lemma 3.1]{BeBo2002Kinetic}}]\label{lemma:Bijectionfandomega}
The sets $\{f_0>0\}$ and $\{\omega_1<\xi<\omega_2\}$ are in bijection by the functions
\begin{equation}
Q(f)=
\begin{pmatrix}
\frac{f_1/f_0-\theta \xi}{1-\theta}-\sqrt{\left(\frac{f_1/f_0-\xi}{1-\theta}\right)^2+\left(\frac{f_0}{c_{\gamma,\kappa}}\right)^{1/\lambda}}\\
\frac{f_1/f_0-\theta \xi}{1-\theta}+\sqrt{\left(\frac{f_1/f_0-\xi}{1-\theta}\right)^2+\left(\frac{f_0}{c_{\gamma,\kappa}}\right)^{1/\lambda}}
\end{pmatrix}
\end{equation}
and 
\begin{equation}
R(\omega)=
\begin{pmatrix}
c_{\gamma,\kappa}\left(\left(\frac{\omega_2-\omega_1}{2}\right)^2-\left(\xi-\frac{\omega_1+\omega_2}{2}\right)^{2}\right)_+^\lambda\\
\left((1-\theta)\frac{\omega_1+\omega_2}{2}+\theta \xi\right) c_{\gamma,\kappa}\left(\left(\frac{\omega_2-\omega_1}{2}\right)^2-\left(\xi-\frac{\omega_1+\omega_2}{2}\right)^{2}\right)_+^\lambda	
\end{pmatrix}.
\end{equation}
\end{lemma}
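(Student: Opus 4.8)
The plan is to verify directly that $Q$ and $R$ are mutually inverse maps respecting the two domains, reducing everything to the inverse relation $f=M(\rho(f,\xi),u(f,\xi),\xi)$ together with the defining identities $\theta=(\gamma-1)/2$ and $\lambda=\frac{1}{\gamma-1}-\frac12$. The whole statement is essentially an algebraic inversion, so the work is bookkeeping of the constants $a_\gamma,c_{\gamma,\kappa},\theta,\lambda$ rather than anything conceptual.

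First I would rewrite $Q$ in macroscopic variables. Writing $A=\frac{f_1/f_0-\theta\xi}{1-\theta}$ and $B=\bigl(\bigl(\frac{f_1/f_0-\xi}{1-\theta}\bigr)^2+(f_0/c_{\gamma,\kappa})^{1/\lambda}\bigr)^{1/2}$, we have $Q(f)=(A-B,A+B)$, so that $A=\frac{\omega_1+\omega_2}{2}$ and $B=\frac{\omega_2-\omega_1}{2}$. By definition $A=u(f,\xi)$, and using the identity $\theta/(\gamma-1)=\tfrac12$ one checks that $B=a_\gamma\rho(f,\xi)^\theta$; hence $Q$ is exactly the kinetic Riemann-invariant map $f\mapsto(\omega_1,\omega_2)$. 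This identification is the one place that genuinely uses the specific value of $\theta$, and it is where I expect to have to be most careful with the exponents, since the square root has to collapse onto the power $\rho^\theta$. Once this is in place the geometry is transparent.

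Next I would check the domain correspondence. A short computation gives $\frac{f_1/f_0-\xi}{1-\theta}=u(f,\xi)-\xi$, so $B^2=(\xi-u(f,\xi))^2+(f_0/c_{\gamma,\kappa})^{1/\lambda}$. When $f_0>0$ the last term is strictly positive, whence $|\xi-u|<B$, i.e.\ $\omega_1<\xi<\omega_2$; thus $Q$ maps $\{f_0>0\}$ into $\{\omega_1<\xi<\omega_2\}$. Conversely, on $\{\omega_1<\xi<\omega_2\}$ the argument $\bigl(\frac{\omega_2-\omega_1}{2}\bigr)^2-\bigl(\xi-\frac{\omega_1+\omega_2}{2}\bigr)^2$ inside $R$ is strictly positive, so the first component of $R(\omega)$ is positive and $R$ lands in $\{f_0>0\}$.

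Finally I would establish $R\circ Q=\mathrm{id}$ and $Q\circ R=\mathrm{id}$. Setting $u=\frac{\omega_1+\omega_2}{2}$ and $a_\gamma\rho^\theta=\frac{\omega_2-\omega_1}{2}$, and using $2\theta=\gamma-1$ to rewrite $\bigl(\frac{\omega_2-\omega_1}{2}\bigr)^2=a_\gamma^2\rho^{\gamma-1}$, one reads off directly that $R(\omega)=M(\rho,u,\xi)$, so $R$ is just the Maxwellian reparametrized by the Riemann invariants. Then $R\circ Q=\mathrm{id}$ reduces to the inverse relation $f=M(\rho(f,\xi),u(f,\xi),\xi)$ on $\{f_0>0\}$, which I would verify by substituting the formulas for $\rho(f,\xi)$ and $u(f,\xi)$ into $\chi$: the bracket collapses to $(f_0/c_{\gamma,\kappa})^{1/\lambda}$, and $(\,\cdot\,)^\lambda$ then returns $f_0$ (using $\lambda>0$ for $1<\gamma<3$), while the second component follows from $(1-\theta)u(f,\xi)+\theta\xi=f_1/f_0$. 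Symmetrically, $Q\circ R=\mathrm{id}$ follows from $u(M(\rho,u,\xi),\xi)=u$ and $\rho(M(\rho,u,\xi),\xi)=\rho$, again a direct substitution. No obstacle remains beyond careful tracking of the constants, the only genuinely delicate point being the exponent identity that matches $B$ with $a_\gamma\rho^\theta$.
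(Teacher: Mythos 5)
Your proposal is correct. The paper itself imports this lemma from Berthelin--Bouchut \cite{BeBo2002Kinetic} without reproving it, and your verification --- identifying $Q(f)=(u(f,\xi)-a_\gamma\rho(f,\xi)^\theta,\,u(f,\xi)+a_\gamma\rho(f,\xi)^\theta)$ as the kinetic Riemann-invariant map and $R$ as the Maxwellian $M(\rho,u,\xi)$ written in the coordinates $u=\tfrac{\omega_1+\omega_2}{2}$, $a_\gamma\rho^\theta=\tfrac{\omega_2-\omega_1}{2}$, then checking the two compositions via the inverse relations --- is exactly the standard argument from that source; all the exponent bookkeeping ($\tfrac{2\theta}{\gamma-1}=1$, $\lambda>0$ for $1<\gamma<3$) checks out.
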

\begin{proposition}[{\cite[Lemma 3.2, Proposition 3.3, Corollary 3.4]{BeBo2002Kinetic}}]\label{prop:SmoothnessPropertiesHandS}
\begin{enumerate}[label=(\roman*)]
	\item If $S\colon \mathbb R\to \mathbb R$ is of class $C^k$, then the functions $(\rho,u)\mapsto \eta_S(\rho,u)$ and $(\rho,q)\mapsto\eta_S(\rho,u)$ with $q=\rho u$ are $C^k$ in $\{\rho>0\}$.
	\item If $S\colon \mathbb R\to \mathbb R$ is of class $C^k$, then $H_S(\cdot,\xi)$ is $C^k$ in $\{f_0>0\}$.
	\item If $S\colon\mathbb R\to\mathbb R$ is bounded on compact sets, then the function $(\omega_1,\omega_2)\mapsto G_S(\rho,u,\xi):=H_S(M(\rho,u,\xi),\xi)$ is continuous differentiable in $\{\omega_1<\xi<\omega_2\}$ with 
	\begin{equation*}
		\frac{\partial G_S}{\partial \omega_i}(\rho,u,\xi)=\int_{\mathbb R}\frac{\partial\Phi}{\partial \omega_i}(\rho,u,\xi,v)\,S(v)\;\mathrm{d}v,\quad\text{for }i=1,2. 
	\end{equation*}
	\item If $S\colon\mathbb R\to\mathbb R$ is bounded on compact sets, then $H_S(\cdot,\xi)$ is continuous at $0$ in $\{f\in D; |f_1|\le Af_0\}$, for any $A>0$.
	\item If $S\colon\mathbb R\to\mathbb R$ is of class $C^1$, then we have $H_S '(M(\rho,u,\xi),\xi)=\eta_S'(\rho,u)$ whenever $M(\rho,u,\xi)_0>0$.
\end{enumerate}
\end{proposition}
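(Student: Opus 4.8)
\section*{Proof proposal for Proposition~\ref{prop:SmoothnessPropertiesHandS}}

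The plan is to reduce everything to the explicit integral formulas and to differentiate under the integral sign, carefully tracking where the smoothness comes from and where it degenerates (namely near the vacuum $\rho=0$ and near the boundary $\xi=\omega_1,\omega_2$). I would treat the five items essentially in the order listed, since later items reuse the change of variables and the kernel estimates from the earlier ones.

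For item (i), I would start from $\eta_S(\rho,u)=\int_{\mathbb R}\chi(\rho,v-u)S(v)\,\mathrm dv$ and substitute $w=v-u$ to separate the $u$-dependence, obtaining $\eta_S(\rho,u)=\int_{\mathbb R}\chi(\rho,w)S(w+u)\,\mathrm dw$. Since $\chi(\rho,w)=c_{\gamma,\kappa}(a_\gamma^2\rho^{\gamma-1}-w^2)_+^\lambda$ is compactly supported in $w$ (on $|w|\le a_\gamma\rho^{(\gamma-1)/2}$) and smooth in $\rho$ on $\{\rho>0\}$, and $S\in C^k$, I would justify differentiating $k$ times under the integral by dominated convergence: each $\partial_\rho$ produces a factor $\rho^{\gamma-2}(a_\gamma^2\rho^{\gamma-1}-w^2)_+^{\lambda-1}$, which is locally integrable in $w$ for $\lambda>0$ (i.e.\ $\gamma<3$), and the $u$-derivatives just move onto $S$. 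The $(\rho,q)$ version follows by composing with the smooth change of variables $u=q/\rho$ on $\{\rho>0\}$. For item (ii) I would use Lemma~\ref{lemma:Bijectionfandomega}: on $\{f_0>0\}$ the maps $f\mapsto(\rho(f,\xi),u(f,\xi))$ given by the stated inverse relation are $C^\infty$ (they are algebraic with a square root of a strictly positive quantity), so $H_S(f,\xi)=\eta_S$-type integral composed with these maps inherits $C^k$ regularity from (i), pointwise in $\xi$.

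For item (iii) I would pass to the $(\omega_1,\omega_2)$ parametrization via $R(\omega)$ from Lemma~\ref{lemma:Bijectionfandomega} and work directly with $H_S(M,\xi)=\int_{\mathbb R}\Phi(\rho,u,\xi,v)S(v)\,\mathrm dv$. The claim is $C^1$ dependence on $(\omega_1,\omega_2)$ on the open set $\{\omega_1<\xi<\omega_2\}$ with differentiation under the integral. The key is that $\Phi$ has the explicit form involving $\mathbbm 1_{\omega_1<v<\omega_2}|\xi-v|^{2\lambda-1}\Upsilon_{\lambda-1}(z)$; I would show $\partial_{\omega_i}\Phi$ exists a.e.\ $v$ and is dominated by an $L^1_v$ function locally uniformly in $(\omega_1,\omega_2)$. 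This is the delicate point and I expect it to be \textbf{the main obstacle}: the integrand has an integrable singularity $|\xi-v|^{2\lambda-1}$ at $v=\xi$ (integrable since $2\lambda-1>-1$) and the indicator has moving endpoints at $v=\omega_1,\omega_2$, so one must check that the boundary contributions vanish because $\Phi$ degenerates at $z=1$ (i.e.\ at the endpoints $\Upsilon_{\lambda-1}(1)=0$) and that $\partial_{\omega_i}z$ does not destroy local integrability. I would verify near $v=\omega_2$ (and symmetrically $v=\omega_1$) that $z\to\infty$ so $\Upsilon_{\lambda-1}(z)$ grows only like a power that is controlled against the decay of the other factors, giving an $L^1_v$ bound and hence the stated formula $\partial_{\omega_i}G_S=\int_{\mathbb R}\partial_{\omega_i}\Phi\,S\,\mathrm dv$.

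For item (iv), continuity of $H_S(\cdot,\xi)$ at $f=0$ restricted to the cone $\{|f_1|\le Af_0\}$, I would show that as $f_0\to0$ along this cone one has $\rho(f,\xi)\to0$ while $u(f,\xi)$ stays bounded (the bound $|f_1/f_0|\le A$ forces $|u|$ bounded, using its defining formula), and then estimate $H_S=\int\Phi S\,\mathrm dv$ directly: the support $\{\omega_1<v<\omega_2\}$ shrinks to the point $v=u$ as $\rho\to0$, and $\int_{\mathbb R}\Phi\,\mathrm dv=M_0=f_0\to0$, while $S$ is bounded on the shrinking compact support, so $|H_S|\le \sup_{\mathrm{supp}}|S|\cdot f_0\to0=H_S(0,\xi)$. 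Finally, item (v) is an identity rather than a regularity statement: I would differentiate the two equivalent integral expressions for $\eta_S$ and use the relation $\int_{\mathbb R}(1,v)\Phi(\rho,u,\xi,v)\,\mathrm dv=M(\rho,u,\xi)$ together with the chain rule linking $\partial_f H_S$ and $\partial_{(\rho,u)}\eta_S$ through the bijection $Q$; concretely, since $H_S(M(\rho,u,\xi),\xi)$ integrates to $\eta_S$ against $\xi$, differentiating in $(\rho,u)$ and comparing the first moments yields $H_S'(M(\rho,u,\xi),\xi)=\eta_S'(\rho,u)$ wherever $M_0>0$, with all derivatives justified by items (i)--(iii).
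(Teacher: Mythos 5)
Note first that the paper itself contains no proof of this proposition: Section~\ref{sec:KineticModel} imports it verbatim from \cite{BeBo2002Kinetic} (``all proofs are given there''), so your attempt can only be measured against the cited proofs. Against those, your plan has a genuine gap at its core, in items (i) and (ii). You propose to differentiate under the integral with the derivatives falling on the kernel: after $w=v-u$, each $\partial_\rho$ applied to $\chi(\rho,w)=c_{\gamma,\kappa}(a_\gamma^2\rho^{\gamma-1}-w^2)_+^\lambda$ lowers the exponent by one, so the $k$-th $\rho$-derivative carries a factor $(a_\gamma^2\rho^{\gamma-1}-w^2)_+^{\lambda-k}$, which is locally integrable in $w$ only for $k<\lambda+1$. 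Your integrability check handles $k=1$ only; already for the shallow water case $\gamma=2$ (so $\lambda=1/2$) the argument breaks at $k=2$, whereas the statement asserts $C^k$ regularity for every $k$ with $S\in C^k$. The mechanism must instead put \emph{all} derivatives on $S$: the scaling substitution $v=u+a_\gamma\rho^\theta z$ fixes the domain and yields $\eta_S(\rho,u)=\frac{\rho}{J_\lambda}\int_{-1}^1(1-z^2)^\lambda S(u+a_\gamma\rho^\theta z)\,\mathrm dz$, from which $C^k$ regularity in $\{\rho>0\}$ is immediate; this is the route in \cite{BeBo2002Kinetic} (compare the form of $T_S$ in Proposition~\ref{prop:SubdifferentialInequality}). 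Item (ii) then does not reduce to (i) as you claim: for fixed $\xi$, $H_S(f,\xi)=\int_{\mathbb R}\Phi(\rho(f,\xi),u(f,\xi),\xi,v)S(v)\,\mathrm dv$ is \emph{not} $\eta_S(\rho(f,\xi),u(f,\xi))$ — only the $\xi$-integral along the Maxwellian recovers $\eta_S$ — so you need $C^k$ smoothness of the $\Phi$-integral in $(\omega_1,\omega_2)$, which your item (iii) supplies only at the $C^1$ level; again an analogous change of variables transferring derivatives to $S$ is required.

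There are further concrete errors in the remaining items, of varying severity. In (iii) you misplace the singularities of the kernel: at the moving endpoints $v\to\omega_1^+,\omega_2^-$ one computes $z\to 1$, where $\Upsilon_{\lambda-1}(z)\sim c\,(z-1)^{\lambda}\to 0$ — this vanishing is what kills the boundary terms — while $z\to\infty$ occurs only at $v=\xi$, where $|\xi-v|^{2\lambda-1}$ compensates; your sentence claiming $z\to\infty$ near $v=\omega_2$ contradicts your own (correct) earlier remark about $\Upsilon_{\lambda-1}(1)=0$. In (iv) the intermediate claim $\rho(f,\xi)\to 0$ along the cone is false: taking $f_1=0$, $f_0\to 0$, $\xi\neq 0$ gives $\rho(f,\xi)\to a_\gamma^{-2/(\gamma-1)}\left|\xi/(1-\theta)\right|^{2/(\gamma-1)}>0$; what is true, and suffices for your final inequality, is that $u(f,\xi)$, $\rho(f,\xi)$, and hence $\omega_1,\omega_2$ stay bounded on the cone while $\int_{\mathbb R}\Phi\,\mathrm dv=M_0=f_0\to 0$, so $|H_S(f,\xi)|\le\sup_{[\omega_1,\omega_2]}|S|\cdot f_0\to 0$. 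Finally, (v) is not proved by your moment comparison: differentiating $\int_{\mathbb R}H_S(M(\rho,u,\xi),\xi)\,\mathrm d\xi=\eta_S(\rho,u)$ in $(\rho,q)$ yields only an identity for the $\xi$-\emph{integral} of $H_S'(M,\xi)\,\partial_{(\rho,q)}M$ and cannot deliver the pointwise-in-$\xi$ equality — the entire content of (v) is that $H_S'(M(\rho,u,\xi),\xi)$ is \emph{independent of} $\xi$. The cited proof establishes this by computing $H_S'(M(\rho,u,\xi),\xi)$ explicitly from the kernel representation and identifying it with the $\xi$-free expression $T_S(\rho,u)$ of Proposition~\ref{prop:SubdifferentialInequality}, which coincides with $\eta_S'(\rho,u)$ for $\rho>0$; your argument as written is circular on exactly this point.
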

\begin{proposition}[{\cite[Proposition 3.5]{BeBo2002Kinetic}}]\label{prop:ConvexityofSandH}
\begin{enumerate}[label=(\roman*)]
	\item If $S\colon\mathbb R\to\mathbb R$ is convex and of class $C^2$, then $\eta_S$ is convex in $\{\rho>0\}$ and if $S''>0$, then $\eta_S''>0$.
	\item If $S\colon \mathbb R\to\mathbb R$ is convex, then $H_S(\cdot,\xi)$ is convex in $D$. 
\end{enumerate}
\end{proposition}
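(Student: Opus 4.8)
\emph{Strategy.} I would prove (ii) first and read off (i) from it, since convexity of the macroscopic entropy in the conserved variables is most cleanly obtained from convexity of its kinetic counterpart through a minimisation principle. I use repeatedly that $S\mapsto H_S(f,\xi)$ and $S\mapsto\eta_S$ are linear.

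\emph{Part (ii).} The assertion is that $f\mapsto H_S(f,\xi)$ is convex on the (convex) set $D$. On $\{f_0>0\}$ this map is smooth (Proposition \ref{prop:SmoothnessPropertiesHandS}(ii)), so I would show that its Hessian in $f=(f_0,f_1)$ is positive semidefinite. Two equivalent reductions make this tractable: either differentiate through the bijection $f\mapsto(\omega_1,\omega_2)=Q(f)$ of Lemma \ref{lemma:Bijectionfandomega} using the derivative formula of Proposition \ref{prop:SmoothnessPropertiesHandS}(iii), or decompose a convex $S$ (locally, since $\Phi(\rho,u,\xi,\cdot)$ is supported in the bounded interval $(\omega_1,\omega_2)$) as an affine function plus a nonnegative superposition of kinks $(v-k)_+$, reducing by linearity to convexity of the elementary entropies $H_{(\cdot-k)_+}$. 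The affine part of $S$ contributes only $H_1=f_0$ and $H_v=f_1$, which are affine in $f$; the model case $S(v)=v^2/2$ is the guiding example, since (\ref{eq:defkineticenergy}) displays $H$ as a sum of the jointly convex quadratic-over-linear term $f_1^2/f_0$, the convex power $f_0^{1+1/\lambda}$ (here $\lambda>0$ as $\gamma<3$) and affine terms. Finally I would extend convexity across the vacuum point $f=0$ using continuity of $H_S(\cdot,\xi)$ at $0$ on cones $\{|f_1|\le Af_0\}$ (Proposition \ref{prop:SmoothnessPropertiesHandS}(iv)) together with convexity of $D$.

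\emph{Part (i).} Granting (ii), I would first establish the minimisation principle: for $\rho>0$, $q=\rho u$, the Maxwellian minimises $f\mapsto\int_{\mathbb R}H_S(f,\xi)\,\mathrm d\xi$ among $f\in L^1(\mathbb R_\xi,D)$ with $\int_{\mathbb R}f\,\mathrm d\xi=(\rho,q)$, the minimum being $\eta_S(\rho,u)$. This follows at once from (ii): the subgradient inequality $H_S(f,\xi)\ge H_S(M,\xi)+H_S'(M,\xi)\cdot(f-M)$, together with the Gibbs identity $H_S'(M(\rho,u,\xi),\xi)=\eta_S'(\rho,u)$ of Proposition \ref{prop:SmoothnessPropertiesHandS}(v) whose right-hand side is independent of $\xi$, integrates to $\int_{\mathbb R}H_S(f,\xi)\,\mathrm d\xi\ge\eta_S(\rho,u)+\eta_S'(\rho,u)\cdot\int_{\mathbb R}(f-M)\,\mathrm d\xi=\eta_S(\rho,u)$, using (\ref{eq:0thMomentMaxwellian}). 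Convexity of $\eta_S$ in $(\rho,q)$ is then exactly the statement that the partial minimisation of the jointly convex functional $f\mapsto\int H_S(f,\xi)\,\mathrm d\xi$ over the affine constraint $\int f\,\mathrm d\xi=(\rho,q)$ is convex: for two states, the convex combination of their Maxwellians lies in $D$, realises the intermediate moments, and serves as a competitor, so the minimisation principle and pointwise convexity of $H_S$ place $\eta_S$ at the intermediate state below the corresponding combination. For the strict statement, $S''>0$ renders $H_S(\cdot,\xi)$ strictly convex on $\{f_0>0\}$ (same Hessian), so the minimiser is the unique Maxwellian; since the competitor is not a Maxwellian when the two states differ, the first inequality is strict, giving $\eta_S''>0$. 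The explicit representation $\eta_S(\rho,u)=\tfrac{\rho}{J_\lambda}\int_{-1}^1(1-z^2)^\lambda S(u+a_\gamma\rho^\theta z)\,\mathrm dz$, obtained by the substitution $v-u=a_\gamma\rho^\theta z$, provides an independent cross-check.

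\emph{Main obstacle.} The genuine difficulty is the convexity in (ii) up to the vacuum boundary: the inversion $f\mapsto(\rho(f,\xi),u(f,\xi))$ degenerates as $f_0\to0$, so the Hessian argument is valid only on $\{f_0>0\}$ and the convex inequality through $f=0$ must be recovered from continuity and the geometry of $D$.
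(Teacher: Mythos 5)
Note first that the paper itself does not prove this proposition: it is quoted from \cite{BeBo2002Kinetic} (Proposition 3.5 there), and Section \ref{sec:KineticModel} states explicitly that all proofs of the recalled facts are given in the references. So your attempt can only be judged on its own internal logic. Your part (i), granted part (ii), is essentially sound and is the standard route: the Maxwellian minimization principle together with partial minimization of the jointly convex functional $f\mapsto\int H_S(f,\xi)\,\mathrm d\xi$ under the affine moment constraint gives convexity of $\eta_S$ in $(\rho,q)$. Two caveats there: the subgradient inequality you claim to read off from (ii) must also hold at those $\xi$ where $M(\rho,u,\xi)=0$, where $H_S(\cdot,\xi)$ is not differentiable --- this is exactly what the explicit vector $T_S(\rho,u)$ in Proposition \ref{prop:SubdifferentialInequality} is for, and it does not follow from convexity plus Proposition \ref{prop:SmoothnessPropertiesHandS}(v) alone; and your strictness argument yields strict convexity of $\eta_S$ but not the asserted $\eta_S''>0$ (positive definiteness of the Hessian), which strict convexity does not imply.

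The genuine gap is in part (ii), which is the substance of the proposition: you offer two routes to convexity of $H_S(\cdot,\xi)$ on $\{f_0>0\}$ and execute neither. The route ``differentiate through the bijection $Q$'' cannot work as stated, because convexity is not invariant under nonlinear changes of variables; knowing $\partial G_S/\partial\omega_i$ from Proposition \ref{prop:SmoothnessPropertiesHandS}(iii), or even the full Hessian in $(\omega_1,\omega_2)$, does not transfer positivity to the Hessian in $(f_0,f_1)$ without controlling the second derivatives of $Q$. The kink-decomposition route is the right idea but merely relocates the difficulty: one must actually verify that $f\mapsto H_{(v-k)_+}(f,\xi)$ is convex, and this is the nontrivial content, since the kernel $\Phi$ depends on $f$ through $\omega_1(f,\xi),\omega_2(f,\xi)$ in a complicated way (in particular the support of the $v$-integral moves with $f$, and $\Upsilon_{\lambda-1}$ enters nonlinearly). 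The only case you verify is $S(v)=v^2/2$ via the explicit formula (\ref{eq:defkineticenergy}); convexity for one particular $S$ implies nothing for the others. By contrast, the vacuum extension that you single out as the ``main obstacle'' is the routine part: a segment in $D$ with one endpoint at $0$ has its open part in $\{f_0>0\}$ and lies in a cone $\{|f_1|\le Af_0\}$, so Proposition \ref{prop:SmoothnessPropertiesHandS}(iv) together with $H_S(0,\xi)=0$ closes the argument once interior convexity is known. As written, the core of (ii) is asserted rather than proved, and since (i) is derived from (ii), the whole proposal rests on the unproved step.
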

\begin{lemma}[{\cite[Lemma 2.3]{BeBo2000}}]\label{lemma:EstimatesHandf}
There exist $\epsilon_0,\epsilon_1>0$ such that for any $f\in D, \xi\in\mathbb R$, we have
\begin{equation}
H(f,\xi)\ge \epsilon_0 f_0^{p_0}+\epsilon_1 |f_1|^{p_1},
\end{equation}
with
\begin{equation}
p_0=1+1/\lambda>1,\quad p_1=2(1+\lambda)/(1+2\lambda)>1.
\end{equation}
Furthermore,
\begin{align*}
|f_1|\le \sqrt{2H(f,\xi)\,f_0}.
\end{align*}
\end{lemma}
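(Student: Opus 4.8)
The plan is to read off everything from the explicit formula \eqref{eq:defkineticenergy} for $H$ by completing a square in the terms that are quadratic in $\xi$ and $f_1$. One works throughout in the regime $f_0>0$; if $f_0=0$ then membership in $D$ forces $f=0$, and every quantity appearing in the statement vanishes, so both inequalities hold trivially. Since $\gamma\in(1,3)$ gives $\theta=(\gamma-1)/2\in(0,1)$ and $\lambda=\tfrac{1}{\gamma-1}-\tfrac12>0$, the constants below are all positive. The key step is the algebraic identity
\begin{equation*}
H(f,\xi)=\frac12\frac{f_1^2}{f_0}+\frac{\theta}{2(1-\theta)}\left(\xi\sqrt{f_0}-\frac{f_1}{\sqrt{f_0}}\right)^2+\frac{\theta}{2c_{\gamma,\kappa}^{1/\lambda}(1+1/\lambda)}f_0^{1+1/\lambda},
\end{equation*}
which is checked by expanding the square and collecting the coefficient of $f_1^2/f_0$ to $\tfrac12+\tfrac{\theta}{2(1-\theta)}=\tfrac{1}{2(1-\theta)}$, matching the four terms of \eqref{eq:defkineticenergy}. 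All three summands are nonnegative.

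The second assertion is then immediate: discarding the square and the $f_0^{1+1/\lambda}$ term gives $H(f,\xi)\ge f_1^2/(2f_0)$, hence $f_1^2\le 2H(f,\xi)f_0$ and $|f_1|\le\sqrt{2H(f,\xi)f_0}$. For the first assertion I would instead keep the two outer terms and discard the middle square, obtaining
\begin{equation*}
H(f,\xi)\ge \frac12\frac{f_1^2}{f_0}+\beta f_0^{p_0},\qquad \beta:=\frac{\theta}{2c_{\gamma,\kappa}^{1/\lambda}(1+1/\lambda)}>0.
\end{equation*}
Writing $X=f_1^2/f_0\ge0$ and $Y=f_0^{p_0}\ge0$, this reads $H\ge\tfrac12 X+\beta Y$.

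It remains to convert $\tfrac12X+\beta Y$ into $\epsilon_0 f_0^{p_0}+\epsilon_1|f_1|^{p_1}$; this is the one place where a genuine (though standard) inequality is needed. I would write $|f_1|^{p_1}$ as a monomial in $X$ and $Y$: solving $2s=p_1$ and $p_0t=s$ gives $s=p_1/2$ and $t=p_1/(2p_0)$, and using $p_0=(\lambda+1)/\lambda$ and $p_1=2(1+\lambda)/(1+2\lambda)$ one computes $s=\tfrac{1+\lambda}{1+2\lambda}$, $t=\tfrac{\lambda}{1+2\lambda}$, so that $s,t\in(0,1)$ and $s+t=1$. Hence $|f_1|^{p_1}=X^sY^t$, and weighted AM--GM (Young's inequality) gives $X^sY^t\le sX+tY\le X+Y$. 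Splitting $\beta Y=\tfrac{\beta}{2}Y+\tfrac{\beta}{2}Y$ and using $\tfrac12X+\tfrac{\beta}{2}Y\ge\min\{\tfrac12,\tfrac{\beta}{2}\}(X+Y)\ge\min\{\tfrac12,\tfrac{\beta}{2}\}|f_1|^{p_1}$ yields
\begin{equation*}
H(f,\xi)\ge \frac{\beta}{2}f_0^{p_0}+\min\Big\{\tfrac12,\tfrac{\beta}{2}\Big\}|f_1|^{p_1},
\end{equation*}
so the claim holds with $\epsilon_0=\beta/2$ and $\epsilon_1=\min\{1/2,\beta/2\}$. The only real obstacle is pinning down the exponents $s,t$ and verifying $s+t=1$: this is exactly the relation that makes the admissible growth exponents $p_0$ and $p_1$ interlock, and everything else is completing a square and discarding nonnegative terms.
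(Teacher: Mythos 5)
Your proof is correct: the completion-of-square identity reproduces \eqref{eq:defkineticenergy} exactly (the coefficient check $\tfrac12+\tfrac{\theta}{2(1-\theta)}=\tfrac{1}{2(1-\theta)}$ is right), the exponent bookkeeping $s+t=\tfrac{1+\lambda}{1+2\lambda}+\tfrac{\lambda}{1+2\lambda}=1$ is exactly the relation needed for Young's inequality, and the degenerate case $f_0=0$ is handled properly via the definition of $D$. The paper itself does not prove this lemma — it is quoted verbatim from \cite[Lemma 2.3]{BeBo2000} — and your argument (sum-of-squares decomposition of $H$ plus weighted AM--GM to interpolate $|f_1|^{p_1}$ between $f_1^2/f_0$ and $f_0^{p_0}$) is essentially the standard proof given there.
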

\begin{proposition}[Subdifferential inequality, {\cite[Proposition 4.1]{BeBo2002Kinetic}}]\label{prop:SubdifferentialInequality}
If $S\colon\mathbb R\to\mathbb R$ is convex, of class $C^1$, then for every $f\in D,\,\rho\ge 0$ and $u,\xi\in \mathbb R$, we have
\begin{equation}
H_S(f,\xi)\ge H_S(M(\rho,u,\xi),\xi)+T_S(\rho,u)(f-M(\rho,u,\xi)),\\
\end{equation}
with
\begin{equation}
T_S(\rho,u)=\frac{1}{J_\lambda}\int_{-1}^1 (1-z^2)^\lambda 
\begin{pmatrix}
    S(u+a_\gamma \rho^\theta z)+(\theta a_\gamma \rho^\theta z-u)S'(u+a_\gamma \rho^\theta z)\\
		S'(u+a_\gamma \rho^\theta z)
\end{pmatrix}
\mathrm{d}z,
\end{equation}
which coincides with $\eta'_S(\rho,u)$ for $\rho>0$. If $f\neq 0$, we have
\begin{equation}
(H'_S(f,\xi)-T_S(\rho,u))(M(\rho,u,\xi)-f)\le 0.
\end{equation}
\end{proposition}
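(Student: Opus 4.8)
\emph{Proof idea.} The statement is exactly the assertion that $T_S(\rho,u)$ is a subgradient of the convex function $f\mapsto H_S(f,\xi)$ (convex by Proposition \ref{prop:ConvexityofSandH}(ii)) at the point $M(\rho,u,\xi)$. Equivalently, writing $h(f):=H_S(f,\xi)-T_S(\rho,u)\cdot f$, which is again convex on the convex cone $D$, the claim is that $M(\rho,u,\xi)$ minimizes $h$ over $D$ (the minimal value being $H_S(M,\xi)-T_S(\rho,u)\cdot M$), while the last inequality records the two-sided comparison of $h$ at $f$ and at $M$. I would organize the argument around this reformulation and split according to whether the Maxwellian lies in the interior $\{f_0>0\}$ or at the tip $0$ of the cone.

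First I would verify the identity $T_S(\rho,u)=\eta_S'(\rho,u)$ for $\rho>0$. Substituting $v=u+a_\gamma\rho^\theta z$ in $\eta_S(\rho,u)=\int_{\mathbb R}\chi(\rho,v-u)S(v)\,\mathrm dv$ collapses it, after cancelling the powers of $a_\gamma$, to $\eta_S(\rho,u)=\tfrac{\rho}{J_\lambda}\int_{-1}^{1}(1-z^2)^\lambda S(u+a_\gamma\rho^\theta z)\,\mathrm dz$; differentiating this in the conservative variables $(\rho,q)$ with $q=\rho u$ (chain rule, using $\partial_q=\rho^{-1}\partial_u$ and $\partial_\rho|_q=\partial_\rho|_u-\rho^{-1}u\,\partial_u$) reproduces exactly the two components of $T_S(\rho,u)$, together with Proposition \ref{prop:SmoothnessPropertiesHandS}(v). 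For the interior case $M(\rho,u,\xi)_0=\chi(\rho,\xi-u)>0$ (that is $\rho>0$ and $\omega_1<\xi<\omega_2$) the point $M$ lies in the open set $\{f_0>0\}$ on which $H_S(\cdot,\xi)$ is $C^1$ (Proposition \ref{prop:SmoothnessPropertiesHandS}(ii)); hence $h$ is convex and differentiable at $M$ with $h'(M)=H_S'(M,\xi)-T_S(\rho,u)=\eta_S'(\rho,u)-\eta_S'(\rho,u)=0$. A convex function with a critical point in the interior of its domain attains its global minimum there, so $h(f)\ge h(M)$ for all $f\in D$, which is the asserted inequality.

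The boundary case $M(\rho,u,\xi)=0$ (that is $\rho=0$, or $\rho>0$ with $\xi\notin(\omega_1,\omega_2)$) is the main obstacle, since now $M$ sits at the tip of $D$ and $H_S(\cdot,\xi)$ need not be differentiable there; the inequality reduces to $H_S(f,\xi)\ge T_S(\rho,u)\cdot f$ for all $f\in D$, i.e.\ to nonnegativity of the one-sided directional derivatives of $h$ at $0$ along the rays $t\mapsto tf$. The clean prototype is $\rho=0$, where $T_S(0,u)=(S(u)-uS'(u),\,S'(u))$: using $\int_{\mathbb R}(1,v)\,\Phi(\rho(f,\xi),u(f,\xi),\xi,v)\,\mathrm dv=f$ one obtains $H_S(f,\xi)-T_S(0,u)\cdot f=\int_{\mathbb R}\Phi(\cdots,v)\,[S(v)-S(u)-S'(u)(v-u)]\,\mathrm dv\ge0$ at once from $\Phi\ge0$ and the tangent-line inequality for the convex $S$. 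For $\rho>0$ with $\xi$ outside $(\omega_1,\omega_2)$ the rays are interior Maxwellians $tf=M(\rho_t,u(f,\xi),\xi)$ whose densities $\rho_t$ decrease to the value where $\xi$ becomes a Riemann-invariant endpoint, and I would compute the directional derivative by passing to the limit via the continuity of $G_S$ in the Riemann invariants and of $H_S(\cdot,\xi)$ at $0$ on cones $\{|f_1|\le Af_0\}$ (Proposition \ref{prop:SmoothnessPropertiesHandS}(iii),(iv)). This is the delicate point: unlike the case $\rho=0$, the affine map $v\mapsto T_S(\rho,u)\cdot(1,v)$ is \emph{not} a global minorant of $S$ when $\rho>0$, so the sign cannot be read off pointwise and one must exploit the specific structure of the kernel $\Phi$ (together once more with the convexity of $S$) to control it.

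Finally, the last inequality follows formally once the subdifferential inequality is established. For $f\ne0$, where $H_S(\cdot,\xi)$ is differentiable, I would add the subgradient inequality at $M$, namely $H_S(f,\xi)-H_S(M,\xi)\ge T_S(\rho,u)\cdot(f-M)$, to the tangent-plane inequality at $f$, namely $H_S(M,\xi)-H_S(f,\xi)\ge H_S'(f,\xi)\cdot(M-f)$. The entropy terms cancel and leave $(H_S'(f,\xi)-T_S(\rho,u))\cdot(M-f)\le0$, as claimed.
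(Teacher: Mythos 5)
First, note that the paper itself contains no proof of this proposition: it is imported verbatim from \cite{BeBo2002Kinetic} (Proposition 4.1 there), and Section \ref{sec:KineticModel} states explicitly that all proofs are given in those references. So there is no in-paper argument to compare against, and your proposal has to be judged on its own. Much of it is correct. The identity $T_S=\eta_S'$ for $\rho>0$ does follow from the substitution $v=u+a_\gamma\rho^\theta z$ as you describe (the prefactors collapse because $\theta(2\lambda+1)=1$ and $c_{\gamma,\kappa}a_\gamma^{2\lambda+1}=1/J_\lambda$). In the interior case $M(\rho,u,\xi)_0>0$ the claim is indeed just the gradient inequality for the convex function $H_S(\cdot,\xi)$ on the convex cone $D$ at the interior point $M$, using $H_S'(M(\rho,u,\xi),\xi)=\eta_S'(\rho,u)=T_S(\rho,u)$ from Proposition \ref{prop:SmoothnessPropertiesHandS}(v); the one-sided difference-quotient argument needs no continuity at the other endpoint, so $f=0$ causes no trouble. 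The case $\rho=0$ via $\int_{\mathbb R}(1,v)\Phi\,\mathrm{d}v=f$ and the tangent-line inequality for $S$ is correct, as is the derivation of the final inequality by adding the subgradient inequality at $M$ to the tangent-plane inequality at $f$.

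The genuine gap is exactly where you flag it: $\rho>0$ with $M(\rho,u,\xi)=0$, i.e. $|\xi-u|\ge a_\gamma\rho^\theta$. Your reduction to the one-sided derivative of $h(f)=H_S(f,\xi)-T_S(\rho,u)\cdot f$ at $0$ along the ray $t\mapsto tf$ is legitimate (convexity of $h$ and $h(0)=0$ give $h(f)\ge\lim_{t\to0^+}h(tf)/t$), but it converts the problem into two nontrivial tasks that the proposal does not carry out: computing the rate at which $H_S(tf,\xi)=H_S(M(\rho_t,u_t,\xi),\xi)$ vanishes as the Riemann invariants degenerate at $\xi$, and then showing that this limit dominates $T_S(\rho,u)\cdot f$ \emph{uniformly} over all $(\rho,u)$ in the degenerate region $|\xi-u|\ge a_\gamma\rho^\theta$ --- note that the limit of the ray depends only on $(f,\xi)$, while $(\rho,u)$ is arbitrary there. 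Neither step follows from the continuity statements in Proposition \ref{prop:SmoothnessPropertiesHandS}(iii)--(iv) alone; an obvious fallback, namely obtaining the inequality at the critical density $\rho'$ with $a_\gamma(\rho')^\theta=|\xi-u|$ by sending interior Maxwellians to zero and then descending to the given $\rho<\rho'$, would require a monotonicity of $\rho\mapsto T_S(\rho,u)\cdot f$ that fails for general $f\in D$ since $f_1$ has arbitrary sign. This degenerate case is precisely where the cited proof does its real work with the explicit kernel $\Phi$ and its derivatives in $\omega_1,\omega_2$; your proposal correctly identifies the obstacle but does not overcome it, so as written the proof is incomplete.
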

\begin{corollary}[Entropy minimization principle, {\cite[Corollary 4.4]{BeBo2002Kinetic}}]\label{cor:EntropyMinimizationPrinciple}
Assume that $S\colon \mathbb R\to\mathbb R$ is convex, of class $C^1$ and such that $|S(v)|\le B(1+v^2)$ for some $B\ge 0$. Consider $f\in L^1(\mathbb R_\xi)$ such that $f\in D$ a.e. and $\int_{\mathbb R} H(f(\xi),\xi)\;\mathrm{d}\xi<\infty$. Then, $H_S(f(\xi),\xi)$ and $H_S(M[f](\xi),\xi)$ lie in $L^1(\mathbb R_\xi)$ with
\begin{equation}
\int_{\mathbb R} H_S(M[f](\xi),\xi)\;\mathrm{d}\xi\le \int_{\mathbb R}H_S(f(\xi),\xi)\;\mathrm{d}\xi.
\end{equation}
\end{corollary}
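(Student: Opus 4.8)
The plan is to reduce the statement to the subdifferential inequality of Proposition~\ref{prop:SubdifferentialInequality}, applied with the \emph{macroscopic moments of $f$ itself}, and to control all integrals through the quadratic growth of $S$. Throughout write $\rho=\int_\mathbb R f_0\,\mathrm d\xi$ and $\rho u=\int_\mathbb R f_1\,\mathrm d\xi$ for the (finite, since $f\in L^1$) moments of $f$, so that $M[f](\xi)=M(\rho,u,\xi)$. First I would derive a pointwise growth bound on $H_S$. Using the representation $H_S(f,\xi)=\int_\mathbb R \Phi(\rho(f,\xi),u(f,\xi),\xi,v)\,S(v)\,\mathrm dv$ for $f\neq 0$, the kernel moment identity $\int_\mathbb R(1,v)\,\Phi\,\mathrm dv=M$, and the definition~(\ref{eq:defkineticenergy}) of $H$ (that is $H_S$ with $S(v)=v^2/2$), one gets $\int_\mathbb R \Phi\,\mathrm dv=f_0$ and $\int_\mathbb R v^2\,\Phi\,\mathrm dv=2H(f,\xi)$. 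The hypothesis $|S(v)|\le B(1+v^2)$ then yields
\[
|H_S(f,\xi)|\le \int_\mathbb R \Phi\,|S(v)|\,\mathrm dv\le B\big(f_0+2H(f,\xi)\big)\qquad\text{for every }f\in D,
\]
the case $f=0$ being trivial.

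This bound is the integrability lever. Applied to $f$ it gives $H_S(f,\cdot)\in L^1(\mathbb R_\xi)$, because $\int_\mathbb R f_0\,\mathrm d\xi=\rho<\infty$ and $\int_\mathbb R H(f,\xi)\,\mathrm d\xi<\infty$ by assumption. Applied to $M[f]$ it gives $H_S(M[f],\cdot)\in L^1(\mathbb R_\xi)$, because $\int_\mathbb R M[f]_0\,\mathrm d\xi=\rho$ and $\int_\mathbb R H(M[f],\xi)\,\mathrm d\xi=\eta(\rho,u)$. The latter is finite: if $\rho=0$ then $f=0$ a.e.\ and every quantity vanishes, while if $\rho>0$ then $u=(\rho u)/\rho$ is finite and $\eta(\rho,u)<\infty$ by its explicit form.

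With both functions in $L^1$, I would integrate over $\xi\in\mathbb R$ the subdifferential inequality of Proposition~\ref{prop:SubdifferentialInequality}, read off with exactly this $(\rho,u)$,
\[
H_S(f(\xi),\xi)\ge H_S(M[f](\xi),\xi)+T_S(\rho,u)\,\big(f(\xi)-M[f](\xi)\big).
\]
Since $T_S(\rho,u)$ is a constant vector and $f-M[f]\in L^1(\mathbb R_\xi,\mathbb R^2)$, the linear term is integrable, and by the definition of the moments together with the moment identity~(\ref{eq:0thMomentMaxwellian}),
\[
\int_\mathbb R T_S(\rho,u)\,\big(f-M[f]\big)\,\mathrm d\xi
=T_S(\rho,u)\Big(\textstyle\int_\mathbb R f\,\mathrm d\xi-\int_\mathbb R M(\rho,u,\xi)\,\mathrm d\xi\Big)=0.
\]
What remains is precisely $\int_\mathbb R H_S(M[f],\xi)\,\mathrm d\xi\le \int_\mathbb R H_S(f,\xi)\,\mathrm d\xi$.

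I expect the only delicate point to be the integrability of $H_S(M[f],\cdot)$, specifically the identification $\int_\mathbb R H(M[f],\xi)\,\mathrm d\xi=\eta(\rho,u)<\infty$ and the separate treatment of the vacuum case $\rho=0$; once integrability is secured, the inequality itself drops out mechanically from the moment matching that annihilates the linear term.
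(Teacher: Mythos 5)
Your argument is correct and is essentially the proof given in the cited reference (the paper itself only cites \cite[Corollary 4.4]{BeBo2002Kinetic} without reproving it): one integrates the subdifferential inequality of Proposition \ref{prop:SubdifferentialInequality} at the moments $(\rho,u)$ of $f$, and the linear term vanishes by the moment identity (\ref{eq:0thMomentMaxwellian}). Your preliminary bound $|H_S|\le B(f_0+2H)$ correctly secures the integrability needed to split the integrated inequality, so there is no gap.
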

\section{Solution to the BGK Model}
\label{sec:SolutionToTheBGKModel}
In this section, we prove Theorem \ref{thm:ExistenceKineticModel} by adapting the arguments in \cite{BeBo2000}.
\begin{lemma}\label{lemma:SolutionLinearProblem}
Let $h\in L^1((0,T)_t,L^1((0,\infty)_x\times \mathbb R_\xi))^d,f^{0}\in L^1((0,\infty)_x\times \mathbb R_\xi)^d$ and 
\begin{equation*}
\Psi\colon (0,T)\times L^1_\mu((-\infty,0)_\xi)^d\to  L^1_\mu((0,\infty)_\xi)^d.
\end{equation*}
Then there exists a unique solution
\begin{equation}
f^i\in C([0,T]_t,L^1((0,\infty)_x\times\mathbb R_\xi))\cap C([0,\infty)_x,L_\mu^1((0,T)_t\times\mathbb R_\xi))
\end{equation}
to the problem
\begin{equation}\label{eq:SolutionLinearProblem}
\begin{cases}
\partial_t f^i+\xi\partial_x f^i=\frac{h^i-f^i}{\epsilon},&t\in(0,T),x>0,\xi\in\mathbb R,\\
f^i(0,x,\xi)=f^{0,i}(x,\xi),&x>0,\xi\in\mathbb R,\\
f^i(t,0,\xi)= \Psi^i[t,f(t,0,\cdot)](\xi),&t\in(0,T),\xi>0,
\end{cases}
\end{equation}
for $i=1,...,d$. Furthermore, for any $t\in[0,T]$, a.e. $x>0$, $\xi\in\mathbb R$,
\begin{align}
f^i&(t,x,\xi)=\left[f^{0,i}(x-t\xi,\xi)e^{-t/\epsilon}+\frac{1}{\epsilon}\intt{e^{-s/\epsilon}h^i(t-s,x-s\xi,\xi)} \right]_{x>t\xi}\nonumber\\
&+\bigg[ \Psi^i[t-x/\xi,f(t-x/\xi,0,\cdot)](\xi) e^{-x/(\epsilon\xi)}+\frac{1}{\epsilon}\int_0^{x/\xi}e^{-s/\epsilon}h^i(t-s,x-s\xi,\xi)\;\text{d}s\, \bigg]_{x<t\xi},\label{eq:CharacteristicFormula}
\end{align}
and
\begin{equation}
\lVert{f^i}\rVert_{C_x([0,\infty),L_\mu^1((0,T)_t\times\mathbb R_\xi))}\le \lVert{f^{i,0}}\rVert_{L^1}+\lVert{\Psi^i[\cdot,f(x=0)]}\rVert_{L^1_\mu}+\frac{1}{\epsilon}\lVert{h^i}\rVert_{L^1}.
\end{equation}
\end{lemma}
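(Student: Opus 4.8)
The plan is to solve the problem explicitly by the method of characteristics, i.e.\ to take the Duhamel representation \eqref{eq:CharacteristicFormula} as the \emph{definition} of $f$ and then verify that it has all the asserted properties. The key structural observation is that, although the boundary condition $f^i(t,0,\xi)=\Psi^i[t,f(t,0,\cdot)](\xi)$ looks implicit, the coupling $\Psi$ only reads the trace on $\{\xi<0\}$ (its domain is $L^1_\mu((-\infty,0)_\xi)^d$), and that trace is itself determined by $f^{0}$ and $h$ alone, independently of $\Psi$. Thus the coupling is \emph{triangular}: first the solution on $\{\xi<0\}$ and its trace at $x=0$ are fixed by the data, then $\Psi$ produces the boundary values on $\{\xi>0\}$, and finally the solution on $\{\xi>0\}$ is obtained by transport from the initial data and this boundary datum. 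No genuine fixed point is required at this linear stage.

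Concretely, first I would treat $\xi<0$. For such $\xi$ every backward characteristic through an interior point $(t,x)$, $x>0$, satisfies $x-s\xi=x+s|\xi|>0$ for $s\in(0,t]$, so it originates from the initial data; hence $f^i$ is given by the first bracket of \eqref{eq:CharacteristicFormula}, and letting $x\downarrow 0$ one reads off the outgoing trace $g^i(t,\xi):=f^i(t,0,\xi)$ explicitly in terms of $f^{0,i}$ and $h^i$. The standard $L^1$ mapping properties of the transport semigroup, together with the damping factor $e^{-t/\epsilon}$ and Fubini in the source term, show $g\in L^1_\mu((0,T)_t\times(-\infty,0)_\xi)^d$. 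I would then define the incoming boundary datum by $b^i(t,\xi):=\Psi^i[t,g(t,\cdot)](\xi)$ for $\xi>0$; the continuity hypothesis \eqref{eq:ContPsi} guarantees that $(t,\xi)\mapsto b^i(t,\xi)$ is measurable and lies in $L^1_\mu((0,T)_t\times(0,\infty)_\xi)^d$. Finally, for $\xi>0$ the full formula \eqref{eq:CharacteristicFormula} applies, using the initial data on $\{x>t\xi\}$ and the boundary datum $b$ on $\{x<t\xi\}$.

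Given the formula, the remaining work is verification. I would check that $f$ solves \eqref{eq:SolutionLinearProblem} in the integrated-along-characteristics sense, attains the initial and boundary conditions, and belongs to $C([0,T]_t,L^1((0,\infty)_x\times\mathbb R_\xi))\cap C([0,\infty)_x,L^1_\mu((0,T)_t\times\mathbb R_\xi))$; continuity in $t$ and in $x$ follows from continuity of translations in $L^1$ and the absolute continuity of the Duhamel integrals. The a priori estimate is then triangle-inequality bookkeeping: the homogeneous transport part contributes $\norm{f^{0,i}}_{L^1}$, the boundary part contributes $\norm{\Psi^i[\cdot,f(x=0)]}_{L^1_\mu}$, and the relaxation source, after integrating $\tfrac1\epsilon e^{-s/\epsilon}$, contributes $\tfrac1\epsilon\norm{h^i}_{L^1}$, exactly matching the claimed bound. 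Uniqueness is immediate from the triangular structure: for two solutions the difference solves the homogeneous problem on $\{\xi<0\}$ and hence vanishes there, so the two outgoing traces coincide; since $\Psi$ depends only on that trace, the incoming boundary data coincide as well, and transport from zero initial and boundary data gives zero on $\{\xi>0\}$.

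The main obstacle I anticipate is the careful trace analysis near the boundary and near $\xi=0$. One has to show that the interior solution admits a genuine $L^1_\mu$-trace at $x=0$ and that this trace is continuous in $x$ up to $x=0$, which is why the space $C([0,\infty)_x,L^1_\mu)$ and the weight $\mathrm d\mu=|\xi|\,\mathrm d\xi\,\mathrm dt$ appear: the flux weight $|\xi|$ is exactly what tames the singular factor $e^{-x/(\epsilon\xi)}$ and the shrinking characteristic cone as $\xi\downarrow 0$, ensuring integrability there. Matching the trace of the transported solution on $\{\xi>0\}$ with the prescribed datum $b$, and confirming that hypothesis \eqref{eq:ContPsi} (rather than mere pointwise definition of $\Psi$) is what makes $t\mapsto\Psi[t,g(t,\cdot)]$ jointly measurable, are the two points requiring genuine care; everything else is the adaptation of the $L^1$ characteristics argument of Berthelin--Bouchut.
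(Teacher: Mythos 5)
Your proposal is correct and follows exactly the route the paper intends: the paper states this lemma without proof, deferring to the characteristics construction of Berthelin--Bouchut, and your key observation --- that the coupling is triangular because $\Psi$ reads only the $\xi<0$ trace, which is determined by $f^0$ and $h$ alone, so no fixed point is needed at this linear stage --- is precisely the structure that makes the explicit Duhamel formula well-defined (and is the same trace formula the paper uses later in the proof of Proposition \ref{prop:Stability}). The verification steps and the change-of-variables bookkeeping for the $L^1_\mu$ estimate are the standard ones.
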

\begin{proposition}\label{prop:EstimaresHundfglobal}
Let $f\in L^1((0,\infty)_x\times\mathbb R_\xi,D)^d$ be such that
\begin{gather*}
\sum_{i=1}^d A^i \iint_{(0,\infty)\times\mathbb R}H(f^i(x,\xi),\xi)\;\mathrm{d}x\mathrm{d}\xi \le C_H,\\
\sum_{i=1}^d A^i \iint_{(0,\infty)\times\mathbb R}f^i_0(x,\xi)\;\mathrm{d}x\mathrm{d}\xi \le C_0.
\end{gather*}
Then, we have 
\begin{gather}
\sum_{i=1}^d A^i \iint_{(0,\infty)\times\mathbb R}|f^i_1(x,\xi)|\;\mathrm{d}x\mathrm{d}\xi \le \sqrt{2C_0 C_H},\\
\sum_{i=1}^d A^i \iint_{(0,\infty)\times\mathbb R}\xi^2 f^i_0(x,\xi)\;\mathrm{d}x\mathrm{d}\xi \le \frac{4}{\theta} C_H,\\
\sum_{i=1}^d A^i \iint_{(0,\infty)\times\mathbb R}|\xi|\,|f^i_1(x,\xi)|\;\mathrm{d}x\mathrm{d}\xi \le \sqrt{\frac{8}{\theta}} C_H,\\
f^i_k \text{ is bounded in } L^{p_k}((0,\infty)_x\times \mathbb R_\xi) \text{ for }i=1,...,d \text{ and } k=0,1.
\end{gather}
\end{proposition}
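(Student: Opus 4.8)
The plan is to establish all four inequalities as \emph{pointwise} bounds in $(x,\xi)$ and then integrate against the weighted measure $\sum_{i=1}^d A^i\,\mathrm dx\,\mathrm d\xi$, the only inputs being Lemma \ref{lemma:EstimatesHandf} and the explicit formula \eqref{eq:defkineticenergy} for the kinetic energy $H$. Throughout it suffices to work on $\{f_0^i>0\}$: by the definition of $D$ one has $f^i=0$ on the complement, so every integrand below vanishes there and the estimates are trivial. For the first bound I invoke the inequality $|f_1|\le\sqrt{2H(f,\xi)\,f_0}$ of Lemma \ref{lemma:EstimatesHandf}. Writing $A^i=\sqrt{A^i}\,\sqrt{A^i}$ and splitting $A^i\sqrt{2H^if_0^i}=\sqrt2\,\sqrt{A^iH^i}\,\sqrt{A^if_0^i}$, the Cauchy--Schwarz inequality applied to the combined summation-integration $\sum_i\iint(\cdot)\,\mathrm dx\,\mathrm d\xi$ gives $\sum_i A^i\iint|f_1^i|\le\sqrt2\,(\sum_i A^i\iint H^i)^{1/2}(\sum_i A^i\iint f_0^i)^{1/2}\le\sqrt{2C_0C_H}$.

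The second bound is the crux, since the kinetic energy \eqref{eq:defkineticenergy} contains the sign-indefinite cross term $-\frac{\theta}{1-\theta}\xi f_1$. Discarding the manifestly nonnegative $f_0^{1+1/\lambda}$-contribution and completing the square in the remaining quadratic part yields, for $f_0>0$, the pointwise chain $H(f,\xi)\ge\frac{1}{1-\theta}\big[\frac{\theta}{2}\xi^2 f_0+\frac{f_1^2}{2f_0}-\theta\xi f_1\big]=\frac{\theta}{2}\xi^2 f_0+\frac{(f_1-\theta\xi f_0)^2}{2(1-\theta)f_0}\ge\frac{\theta}{2}\xi^2 f_0$, so that $\xi^2 f_0\le\frac{2}{\theta}H\le\frac{4}{\theta}H$. (Equivalently, the cross term can be absorbed by a single application of Young's inequality; the looser constant $4/\theta$ is retained because it is convenient for the next step.) Integrating this pointwise inequality against $\sum_i A^i\,\mathrm dx\,\mathrm d\xi$ gives the claimed bound on $\sum_i A^i\iint\xi^2 f_0^i$.

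The third estimate is then a purely pointwise consequence of the two previous ingredients: combining $|f_1|\le\sqrt{2Hf_0}$ with $\xi^2 f_0\le\frac{4}{\theta}H$ gives $|\xi|\,|f_1|\le|\xi|\sqrt{2Hf_0}=\sqrt{2H}\,\sqrt{\xi^2 f_0}\le\sqrt{2H}\,\sqrt{\tfrac4\theta H}=\sqrt{\tfrac8\theta}\,H$, and integration yields $\sum_i A^i\iint|\xi|\,|f_1^i|\le\sqrt{8/\theta}\,C_H$. The last claim, boundedness of $f_k^i$ in $L^{p_k}$, is immediate from the first inequality of Lemma \ref{lemma:EstimatesHandf}: since $\epsilon_0(f_0^i)^{p_0}+\epsilon_1|f_1^i|^{p_1}\le H(f^i,\xi)$ pointwise, integrating and using $A^i>0$ controls $\|f_0^i\|_{L^{p_0}}^{p_0}$ and $\|f_1^i\|_{L^{p_1}}^{p_1}$ by $C_H/(\epsilon_k A^i)$.

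The only genuine difficulty is the pointwise inequality $\xi^2 f_0\lesssim H$ in the second step, entirely because of the indefinite term $-\frac{\theta}{1-\theta}\xi f_1$ in \eqref{eq:defkineticenergy}; once it is absorbed by completing the square, the remaining three assertions reduce to Lemma \ref{lemma:EstimatesHandf} together with Cauchy--Schwarz, and no further analysis is needed.
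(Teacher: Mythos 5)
Your proof is correct and follows essentially the route the paper intends: the paper states this proposition without proof, deferring to the corresponding global estimates in Berthelin--Bouchut \cite{BeBo2000}, which are obtained exactly as you do from the explicit formula (\ref{eq:defkineticenergy}), Lemma \ref{lemma:EstimatesHandf}, and Cauchy--Schwarz in the weighted measure $\sum_i A^i\,\mathrm dx\,\mathrm d\xi$. Your completed-square identity for the cross term is valid (it even yields the sharper pointwise constant $2/\theta$ in place of the stated $4/\theta$, which in \cite{BeBo2000} arises from absorbing $-\tfrac{\theta}{1-\theta}\xi f_1$ via Young's inequality instead), so no gap remains.
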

\begin{proposition}\label{prop:Stability}
Let $f^0\in L^1((0,\infty)_x\times\mathbb R_\xi,D)^d$ and let $\Psi$ be as in (\ref{eq:DefPsi} -- \ref{eq:ContPsi}). Let $C_0,C_H\in L^\infty_t(0,T)$ and $g,g_n\in L^\infty((0,T)_t,L^1((0,\infty)_x\times \mathbb R_\xi,D))^d$ such that
\begin{align*}
\sum_{i=1}^d A^i \iint_{(0,\infty)\times \mathbb R}H(g^i(t,x,\xi),\xi)\;\mathrm{d}x\mathrm{d}\xi&\le C_H(t),& \sum_{i=1}^d A^i \iint_{(0,\infty)\times \mathbb R}(g^i)_0(t,x,\xi)\;\mathrm{d}x\mathrm{d}\xi&\le C_0(t),\\
\sum_{i=1}^d A^i \iint_{(0,\infty)\times \mathbb R}H(g^i_n(t,x,\xi),\xi)\;\mathrm{d}x\mathrm{d}\xi &\le C_H(t),& \sum_{i=1}^d A^i \iint_{(0,\infty)\times \mathbb R}(g^i_n)_0(t,x,\xi)\;\mathrm{d}x\mathrm{d}\xi &\le C_0(t),
\end{align*}
for a.e. $t\in (0,T)$. Set $\rho=(\rho^1,...,\rho^d), \rho u=(\rho^1 u^1,...,\rho^d u^d)$ and
\begin{align*}
(\rho^i(t,x),\rho^i u^i(t,x))&=\int_\mathbb R g^i(t,x,\xi)\;\mathrm{d}\xi,\\
(\rho_n^i(t,x),\rho_n^i u_n^i(t,x))&=\int_\mathbb R g_n^i(t,x,\xi)\;\mathrm{d}\xi.
\end{align*}
If $\rho_n\to \rho$ and $\rho_n u_n\to \rho u$ as $n\to\infty$ in $L^1((0,T)_t\times (0,\infty)_{\mathrm{loc},x})^d$, then there exists a subsequence such that $F(g_n)\rightarrow F(g)$ in $C([0,T]_t,L^1((0,\infty)_{\mathrm{loc},x}\times \mathbb R_\xi))^d$, where $F(g)$ is a solution to (\ref{eq:SolutionLinearProblem}) with $h^i=M[g^i]$. 
\end{proposition}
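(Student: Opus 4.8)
The operator $F$ is the one furnished by Lemma~\ref{lemma:SolutionLinearProblem}, applied with relaxation source $h^i=M[g^i]$; by uniqueness it is well defined and is given explicitly by the characteristic formula \eqref{eq:CharacteristicFormula}. The crucial observation is that $M[g^i]=M(\rho^i,u^i,\xi)$ depends on $g$ only through its moments $(\rho,\rho u)$, so the hypothesis $(\rho_n,\rho_nu_n)\to(\rho,\rho u)$ in $L^1((0,T)_t\times(0,\infty)_{\mathrm{loc},x})^d$ is exactly the information that should drive the convergence of the sources $M[g_n]$ and, through the characteristic formula, of $F(g_n)$. The plan is therefore: (i) pass from moment convergence to strong convergence of the Maxwellians $M[g_n]\to M[g]$; (ii) deduce convergence of the incoming boundary traces in the weighted space $L^1_\mu$; (iii) invoke the continuity property \eqref{eq:ContPsi} of $\Psi$ to pass to the limit in the outgoing traces; and (iv) substitute back into \eqref{eq:CharacteristicFormula}, whose affine structure in $(h,\text{boundary data})$ transfers these convergences to $F(g_n)\to F(g)$ in $C([0,T]_t,L^1((0,\infty)_{\mathrm{loc},x}\times\mathbb R_\xi))^d$.

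\textbf{Step (i): convergence of the Maxwellians.} First I would extract a subsequence along which $(\rho_n,\rho_nu_n)\to(\rho,\rho u)$ a.e.\ in $(0,T)\times(0,\infty)$. Since the Maxwellian depends continuously on the conserved moments $(\rho,\rho u)$ on $\{\rho\ge 0\}$ --- including at the vacuum, where $\chi(\rho,\cdot)\to 0$ forces $M\to 0$ irrespective of $u$ --- it follows that $M[g_n](t,x,\cdot)\to M[g](t,x,\cdot)$ in $L^1_\xi$ for a.e.\ $(t,x)$. To upgrade this to strong convergence in $L^1((0,T)_t\times(0,R)_x\times\mathbb R_\xi)$ for every $R>0$, I would argue by generalized dominated convergence: the pointwise $L^1_\xi$-norm of the difference is controlled by $\rho_n+\rho+|\rho_nu_n|+|\rho u|$, whose summands converge in $L^1$ and hence form an equi-integrable dominating family. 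For the traces I additionally need convergence carrying the weight $|\xi|$; here the uniform energy bound $C_H\in L^\infty_t$ enters through the entropy minimization principle (Corollary~\ref{cor:EntropyMinimizationPrinciple}), which gives $\int_\mathbb R H(M[g_n],\xi)\,\mathrm d\xi\le\int_\mathbb R H(g_n,\xi)\,\mathrm d\xi$, and through Proposition~\ref{prop:EstimaresHundfglobal}, which converts this into a uniform bound on $\int\xi^2(M[g_n])_0\,\mathrm d\xi$ and hence into equi-integrability of $|\xi|\,|M[g_n]|$ with no mass or energy escaping to $|\xi|=\infty$.

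\textbf{Steps (ii)--(iv): traces, coupling, and conclusion.} Evaluating the first bracket of \eqref{eq:CharacteristicFormula} at $x=0$ exhibits the incoming trace ($\xi<0$) of $F(g_n)$ as an explicit affine functional of $h_n=M[g_n]$ and of the fixed initial datum $f^0$. By the trace estimate in Lemma~\ref{lemma:SolutionLinearProblem} this functional is continuous from the source into $L^1_\mu((0,T)_{\mathrm{loc},t}\times(-\infty,0)_\xi)^d$, so Step~(i) yields $F(g_n)(t,0,\cdot)\to F(g)(t,0,\cdot)$ in $L^1_\mu$ on the incoming side. The continuity hypothesis \eqref{eq:ContPsi} is tailored precisely to this situation: it guarantees that the outgoing traces satisfy $\Psi^i[t,F(g_n)(t,0,\cdot)]\to\Psi^i[t,F(g)(t,0,\cdot)]$ in $L^1_\mu((0,T)_{\mathrm{loc},t}\times(0,\infty)_\xi)^d$. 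Finally, I would substitute both convergences into the full formula \eqref{eq:CharacteristicFormula}: the interior bracket is affine and continuous in $h_n$, while the boundary bracket is affine and continuous in the now-convergent outgoing trace data, so the stability estimate of Lemma~\ref{lemma:SolutionLinearProblem} propagates these convergences to $F(g_n)\to F(g)$ uniformly in $t$.

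\textbf{Main obstacle.} The technical heart is Step~(i): passing from a.e.\ convergence of the moments to strong $L^1$ convergence of the Maxwellians. Two points are delicate --- the continuity of $M$ at the vacuum, where the velocity $u=\rho u/\rho$ is undefined yet $M$ extends continuously by $0$, and the control of concentration in $\xi$ (both near $\infty$ and, for the weighted trace bound, in its interplay with the factor $|\xi|$), which must be ruled out uniformly in $n$. Both are resolved by the uniform energy bound $C_H$ together with the entropy minimization principle and the moment estimates of Proposition~\ref{prop:EstimaresHundfglobal}. Once strong convergence of the sources and of the incoming traces is established, the abstract continuity \eqref{eq:ContPsi} of the coupling $\Psi$ and the linearity of the characteristic representation render the remainder routine.
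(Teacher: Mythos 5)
Your proposal follows essentially the same route as the paper's proof: strong $L^1$ convergence of the Maxwellians $M[g_n]\to M[g]$ locally in $x$ (with the $\xi$-tails controlled through the energy bound and the moment estimates of Proposition~\ref{prop:EstimaresHundfglobal}), convergence of the incoming traces in $L^1_\mu$ read off from the characteristic formula (\ref{eq:CharacteristicFormula}), the continuity hypothesis (\ref{eq:ContPsi}) to pass to the limit in the coupling, and back-substitution, all uniformly in $t$. The only point you gloss over is the well-definedness of $F(g)$ itself: Lemma~\ref{lemma:SolutionLinearProblem} is stated for coupling functions defined on all of $L^1_\mu((-\infty,0)_\xi)^d$ rather than on the $D$-valued functions of (\ref{eq:DefPsi}), so the paper first applies it to an extension $\tilde\Psi$ of $\Psi$ and then verifies from the solution formula that the incoming trace genuinely takes values in $D$ a.e., which is needed before $\Psi$ may be evaluated on it.
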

\begin{proof}
First, we have to check that $F(g)$ is well-defined.
Notice that we are not exactly in the setting of Lemma \ref{lemma:SolutionLinearProblem} since the domain of $\Psi$ is different. Therefore we apply Lemma \ref{lemma:SolutionLinearProblem} with $\tilde \Psi[t,r]:=\Psi[t,\tilde r]$ where $\tilde r^i(\xi)=r^i(\xi)$ if $r^i(\xi)\in D$ and $r^i(\xi)=0$ else. Corollary \ref{cor:EntropyMinimizationPrinciple} and Proposition \ref{prop:EstimaresHundfglobal} imply $M[g^i],M[g^i_n]\in L^\infty((0,T)_t,L^1((0,\infty)_x\times\mathbb R_\xi)$ with uniform bounds. It remains to prove that $F(g)$ is a solution to (\ref{eq:SolutionLinearProblem}) with the coupling function $\Psi$ or equivalently $F^i(g)(t,0,\xi)\in D$ for a.e. $t\in(0,T),\xi<0$. The solution formula is
\begin{equation*}
F^i(g)(t,0,\xi)=f^{0,i}(-t\xi,\xi)e^{-t/\epsilon}+\frac{1}{\epsilon}\int_0^t e^{-s/\epsilon}M[g^i](t-s,-s\xi,\xi)\;\mathrm{d}s,
\end{equation*}
for a.e. $t\in(0,T),\,\xi<0$, which gives $F^i(g)_0(t,0,\xi)\ge 0$ for a.e. $t\in(0,T),\,\xi<0$. Assuming $F^i_0(g)(t,0,\xi)=0$ implies $f_0^{0,i}(-t\xi,\xi)=0$ and $M[g^i]_0(t-s,-s\xi,\xi)=0$ a.e. $s\in(0,t)$, but $f^{0,i}\in D$ and $M[g^i]\in D$ a.e. imply $F(g^i)_1(t,0,\xi)=0$ a.e. $t\in(0,T),\,\xi<0$. We conclude that $F^i(g)(t,0,\xi)\in D$ a.e. $t\in(0,T),\,\xi<0$. The proof for $F(g_n)$ works in the same way.\\
We continue with the stability of $F$. As in \cite{BeBo2000}, we have 
\begin{equation*}
M[g_n^i]\to M[g^i] \quad\text{as }n\to\infty\quad \text{in }L^1((0,T)_t\times (0,\infty)_{\text{loc},x}\times \mathbb R_\xi). 
\end{equation*}
We fix an $t\in[0,T]$ and consider the parts $\{x>t\xi\}$ and $\{x<t\xi\}$ of the domain separately. 
For the domain $\{x>t\xi\}$, we proceed as in \cite{BeBo2000}. We have
\begin{align*}
&\iint_{(0,R)\times (-S,S)}|F^i(g_n)-F^i(g)|(t,x,\xi)\,\mathbbm 1 _{\{x>t\xi\}}\;\text{d}x\text{d}\xi \\
&= \frac{1}{\epsilon}\int_0^t e^{-s/\epsilon}\iint_{(0,R)\times (-S,S)}|M[g_n^i]-M[g^i]|(t-s,x-s\xi,\xi)\,\mathbbm 1_{\{x>t\xi\}}\;\text{d}x\text{d}\xi\,\text{d}s\\
&\le \frac{1}{\epsilon} \lVert M[g_n^i]-M[g^i]\rVert_{L^1((0,T)_t\times (0,R+TS)_x\times (-S,S)_\xi)}\rightarrow 0\quad \text{as }n\to\infty,
\end{align*}
for arbitrary constants $R,S>0$. On the other hand, we have
\begin{align*}
&\iint_{(0,\infty)\times \mathbb R\backslash [-S,S]}|F^i(g_n)-F^i(g)|(t,x,\xi)\,\mathbbm 1_{\{x>t\xi\}}\;\text{d}x\text{d}\xi\\
&\le \frac{1}{\epsilon}\int_0^t e^{-s/\epsilon}\iint_{(0,\infty)\times \mathbb R\backslash [-S,S]}\frac{|\xi|}{S}|M[g_n^i]-M[g^i]|(t-s,x-s\xi,\xi)\,\mathbbm 1_{\{x>t\xi\}}\;\text{d}x\text{d}\xi\,\text{d}s\\
&\le\frac{1}{\epsilon S}\lVert\xi M[g_n^i]-\xi M[g^i]\rVert_{L^1((0,T)\times (0,\infty)_x\times \mathbb R_\xi)}.
\end{align*}
Since Proposition \ref{prop:EstimaresHundfglobal}, the last norm is bounded and we get convergence on the domain $\{x>t\xi\}$. \\
On $\{x<t\xi\}$, we have
\begin{align}
&|F^i(g_n)-F^i(g)|(t,x,\xi) \nonumber \\
&\le |\Psi^i[t-x/\xi,F(g_n)(t-x/\xi,0,\cdot)]- \Psi^i[t-x/\xi,F(g)(t-x/\xi,0,\cdot)]|(\xi)\nonumber\\
&\quad+\frac{1}{\epsilon} \int_0^{x/\xi}|M[g^i_n]-M[g^i]|(t-s,x-s\xi,\xi)\;\text{d}s.\label{eq:stabilityestimatedifferentformorethanonejunction}
\end{align}
The second term on the right hand side can be handled with similar arguments as above. The remaining term is
\begin{align*}
\iint_{(0,\infty)\times(0,\infty)}&|\Psi^i[t-x/\xi,F(g_n)(t-x/\xi,0,\cdot)]- \Psi^i[t-x/\xi,F(g)(t-x/\xi,0,\cdot)]|(\xi)\,\mathbbm 1_{\{x<t\xi\}}\;\text{d}x\text{d}\xi\\
&=\iint_{(0,t)\times(0,\infty)}\xi\, |\Psi^i[s,F(g_n)(s,0,\cdot)]- \Psi^i[s,F(g)(s,0,\cdot)]|(\xi)\;\text{d}s\text{d}\xi,
\end{align*}
but this goes to zero since 
\begin{align*}
&\iint_{(0,T)\times (-\infty,0)} |\xi F^k(g_n)-\xi F^k(g)|(t,0,\xi)\;\text{d}t\text{d}\xi\\
&\le \frac{1}{\epsilon}\iiint_{(0,T)_t\times(0,t)_s\times (-\infty,0)_\xi}|\xi M[g^k_n]-\xi M[g^k]|(t-s,-s\xi,\xi)\;\text{d}t\text{d}s\text{d}\xi\to 0 
\end{align*}
as $n\to \infty \text{ for } k=1,...,d$ and the continuity assumption (\ref{eq:ContPsi}) on $\Psi$.
This completes the convergence proof on $\{x<t\xi\}$ and gives the stability result since the estimates are uniform in $t\in[0,T]$.
\end{proof}
Fix $T>0,\, f^0\in L^1((0,\infty)_x\times \mathbb R_\xi,D)^d,\, \Psi\colon (0,\infty)\times L_\mu^1((-\infty,0),D)^d\to L_\mu^1((0,\infty)_\xi,D)^d$ such that the assumptions in Theorem \ref{thm:ExistenceKineticModel} are satisfied. We set
\begin{align}
C_H(t)&=\sum_{i=1}^d A^i\iint_{(0,\infty)\times\mathbb R}H(f^{0,i}(x,\xi),\xi)\;\text{d}x\text{d}\xi +\int_0^t b_H(s)\;\text{d}s,\\
C_0(t)&=\sum_{i=1}^d A^i\iint_{(0,\infty)\times\mathbb R}f^{0,i}_0(x,\xi)\;\text{d}x\text{d}\xi +\int_0^t b_0(s)\;\text{d}s.
\end{align}
We define the set $C$ by all functions $g\in L^\infty((0,T)_t,L^1((0,\infty)_x\times \mathbb R_\xi))^d$ satisfying (\ref{con:C1} -- \ref{con:C3}) for a.e. $t\in[0,T]$, where
\begin{gather}
g^i(t,x,\xi)\in D \quad \text{a.e. in }(0,\infty)_x\times \mathbb R_\xi, \label{con:C1}\tag{C1}\\
\sum_{i=1}^d A^i \iint_{(0,\infty)\times \mathbb R} H(g^i(t,x,\xi),\xi)\;\text{d}x\text{d}\xi\le C_H(t),\label{con:C2}\tag{C2}\\
\sum_{i=1}^d A^i \iint_{(0,\infty)\times \mathbb R} g_0^i(t,x,\xi)\;\text{d}x\text{d}\xi\le C_0(t).\label{con:C3}\tag{C3}
\end{gather}
Let us also introduce
\begin{align*}
\tilde C&=\Big\{g\in C([0,T]_t,L^1((0,\infty)_x\times\mathbb R_\xi))^d \text{ satisfying (\ref{con:C4})}\\
&\qquad\qquad \qquad \qquad \qquad \text{ and (\ref{con:C1} -- \ref{con:C3}) for all }t\in[0,T]\Big\},   
\end{align*}
with
\begin{equation}
\left(\partial_t g^i+\xi\partial_x g^i+\frac{g^i}{\epsilon}\right)_i\in \frac{C}{\epsilon}.\label{con:C4}\tag{C4}
\end{equation}
\begin{lemma}\label{lemma:FinTildeC}
If $g\in C$, then $(M[g^1],...,M[g^d])\in C$ and $F(g)\in\tilde C$.
\end{lemma}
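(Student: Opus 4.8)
The plan is to prove the two assertions in turn, observing first that $M[g]\in C$ feeds directly into the verification of $F(g)\in\tilde C$. For $M[g]\in C$ everything follows from the pointwise and moment properties of the Maxwellian recalled in Section \ref{sec:KineticModel}. By its very definition $M(\rho,u,\xi)\in D$ for all $\rho\ge 0$, $u\in\mathbb R$ (its first component $\chi\ge 0$ vanishes exactly where its second component does), so (C1) holds for $M[g]$. The zeroth-moment identity \eqref{eq:0thMomentMaxwellian} gives $\int_\mathbb R M[g^i]_0\,\mathrm d\xi=\int_\mathbb R g^i_0\,\mathrm d\xi$ pointwise in $(t,x)$; integrating in $x$ and summing against the weights $A^i$ shows $M[g]$ inherits (C3) from $g$ with the same constant $C_0(t)$. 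The entropy minimization principle (Corollary \ref{cor:EntropyMinimizationPrinciple}), applied for a.e.\ $(t,x)$ with $S(v)=v^2/2$ and integrated in $x$, gives $\sum_i A^i\iint H(M[g^i],\xi)\le\sum_i A^i\iint H(g^i,\xi)\le C_H(t)$, i.e.\ (C2). Finally, Proposition \ref{prop:EstimaresHundfglobal} together with these bounds places $M[g]$ in $L^\infty((0,T)_t,L^1)^d$ (the uniform bound already invoked in the proof of Proposition \ref{prop:Stability}). Hence $M[g]\in C$.

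Now turn to $F(g)$, the solution of \eqref{eq:SolutionLinearProblem} with $h^i=M[g^i]$. Lemma \ref{lemma:SolutionLinearProblem} already yields the time regularity $F(g)\in C([0,T]_t,L^1)^d$ demanded by $\tilde C$. Since $F(g)$ solves $\partial_t F(g)^i+\xi\partial_x F(g)^i+\tfrac1\epsilon F(g)^i=\tfrac1\epsilon M[g^i]$, the tuple appearing in (C4) equals $\tfrac1\epsilon M[g]$ with $M[g]=(M[g^1],\dots,M[g^d])\in C$ by the previous paragraph, so (C4) holds. For (C1) I would read off the characteristic formula \eqref{eq:CharacteristicFormula}: on each of $\{x>t\xi\}$ and $\{x<t\xi\}$, $F(g)^i(t,x,\xi)$ is a nonnegative superposition (with weights $e^{-t/\epsilon}$, resp.\ $e^{-x/(\epsilon\xi)}$, and the measure $\tfrac1\epsilon e^{-s/\epsilon}\,\mathrm ds$) of the values $f^{0,i}$, $M[g^i]$ and the boundary datum $\Psi^i$, all lying in $D$. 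As $D$ is a convex cone the first component stays $\ge 0$, and if it vanishes then every contributing first component vanishes, forcing the corresponding second components to vanish by $D$-membership, so $F(g)^i=0\in D$. This is precisely the argument carried out for the incoming trace in the proof of Proposition \ref{prop:Stability}, now applied in the interior, using that $\Psi$ maps into $D$ and that $F(g)(t,0,\cdot)\big|_{\xi<0}\in D$.

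The core of the lemma is (C2)–(C3) for $f:=F(g)$, and this is where the coupling inequalities enter. Integrating the zeroth component of the kinetic equation over $(0,\infty)_x\times\mathbb R_\xi$, using $\int M[g^i]_0=\int g^i_0$ and an integration by parts in $x$ (the flux vanishing as $x\to\infty$ by the $L^1$ bounds of Proposition \ref{prop:EstimaresHundfglobal}), gives
\[
\frac{\mathrm d}{\mathrm dt}\sum_i A^i\iint f^i_0=\sum_i A^i\int_\mathbb R\xi f^i_0(t,0,\xi)\,\mathrm d\xi+\frac1\epsilon\Big(\sum_iA^i\iint g^i_0-\sum_iA^i\iint f^i_0\Big).
\]
Splitting the boundary integral into $\xi>0$ (where $f^i_0(t,0,\cdot)=\Psi^i_0$) and $\xi<0$ (the incoming trace) and invoking the mass coupling inequality \eqref{eq:PsiMassConservation} bounds the boundary term by $b_0(t)$. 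Since $\sum_iA^i\iint g^i_0\le C_0(t)$, $C_0'=b_0$ and both sides agree at $t=0$, the difference $\sum_iA^i\iint f^i_0-C_0(t)$ starts at $0$ and has derivative $\le-\tfrac1\epsilon$ times itself, so Gronwall yields (C3). For (C2) I would argue identically with $H(\cdot,\xi)$: convexity of $H(\cdot,\xi)$ (Proposition \ref{prop:ConvexityofSandH}) turns the relaxation term into $\tfrac1\epsilon\big(H(M[g^i],\xi)-H(f^i,\xi)\big)$, giving
\[
\frac{\mathrm d}{\mathrm dt}\sum_i A^i\iint H(f^i,\xi)\le\sum_iA^i\int_\mathbb R\xi H(f^i(t,0,\xi),\xi)\,\mathrm d\xi+\frac1\epsilon\Big(E_{M[g]}(t)-\sum_iA^i\iint H(f^i,\xi)\Big),
\]
where $E_{M[g]}(t)=\sum_iA^i\iint H(M[g^i],\xi)\le C_H(t)$ by entropy minimization. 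The entropy coupling inequality \eqref{eq:PsiEntropyConservation} bounds the boundary term by $b_H(t)$, and the same comparison with $C_H$ (using $C_H'=b_H$) gives (C2); both bounds pass from a.e.\ $t$ to every $t$ by the time continuity of $f$.

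The step needing the most care is the rigorous justification of these global balances: the integration by parts producing the trace at $x=0$, the vanishing of the flux as $x\to\infty$, and the convexity-based chain rule for the non-smooth $H$ at $f_0=0$. I expect to handle these exactly as in \cite{BeBo2000,BeBo2002Boundary}, through the mild (characteristic) formulation and an approximation argument. The only genuinely new ingredient, and the crux of the invariance of $\tilde C$, is that the boundary flux at the junction is controlled by $b_0(t)$ and $b_H(t)$ via the coupling inequalities \eqref{eq:PsiMassConservation}–\eqref{eq:PsiEntropyConservation}.
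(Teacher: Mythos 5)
Your proposal is correct in substance and reaches the same estimates, but it organizes the key step (C2)--(C3) differently from the paper. You derive a differential balance: integrate the kinetic equation (respectively the entropy identity $\partial_t H+\xi\partial_x H=H'\cdot(M[g]-f)/\epsilon$) over $x$ and $\xi$, bound the boundary flux at $x=0$ by $b_0(t)$ resp.\ $b_H(t)$ via \eqref{eq:PsiMassConservation}--\eqref{eq:PsiEntropyConservation}, bound the relaxation term by convexity and the entropy minimization principle, and close with a Gronwall comparison against $C_0,C_H$. The paper instead never differentiates in time: it applies Jensen's inequality directly to the Duhamel/characteristics formula \eqref{eq:CharacteristicFormula} (whose weights $e^{-t/\epsilon}$, $e^{-x/(\epsilon\xi)}$, $\tfrac1\epsilon e^{-s/\epsilon}\,\mathrm ds$ form a sub-probability measure), changes variables to produce the estimates \eqref{eq:EstimateHwithBC1}--\eqref{eq:EstimateHwithBC2}, inserts the coupling inequality and entropy minimization at the level of these explicit integrals, and finishes with an integration by parts in $s$ to recover $C_H(t)$ in \eqref{eq:EstimateHwithBC3}. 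The paper's route buys you a fully self-contained argument: there is no chain rule for the non-smooth $H$ at $f_0=0$, no integration by parts in $x$, no decay of the flux at $x\to\infty$, and no trace theory to justify --- exactly the points you flag as ``needing the most care'' and propose to resolve ``through the mild formulation,'' which is in effect a pointer back to the paper's own argument. So your formal computation is a correct and more physically transparent presentation, but its rigorous completion collapses onto the Jensen-on-Duhamel proof; if you want the differential version to stand on its own you would need to supply the regularized chain rule (as in Bouchut's entropy identity, which the paper only invokes later, in Section \ref{sec:RelaxationtotheMacroscopicLimit}) and the existence of the $L^1_\mu$ boundary traces from Lemma \ref{lemma:SolutionLinearProblem}. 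Your treatment of (C1), (C4), the time regularity, and of $M[g]\in C$ matches the paper's.
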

\begin{proof}
Let $g\in C$. As in the proof of Proposition \ref{prop:Stability}, we have $M[g^i]\in L^\infty((0,T)_t,L^1((0,\infty)_x\times \mathbb R_\xi))$ and we easily get $(M[g^1],...,M[g^d])\in C$. 
We continue with the proof of $F(g)\in\tilde C$. $F(g)$ is well-defined and Lemma \ref{lemma:SolutionLinearProblem} is applicable (see proof of Proposition \ref{prop:Stability}). Hence, we have $F^i(g)\in C([0,T]_t,L^1((0,\infty)_x\times \mathbb R_\xi))$. Next, we verify (\ref{con:C1} -- \ref{con:C4}) for $F(g)$ and fix $t\in[0,T]$. The characteristics formula for $F(g)$ in (\ref{eq:SolutionLinearProblem}) and $\Psi[s,F(g)(s,0,\cdot)]\in D$ a.e. imply that $F^i(g)_0\ge 0$ a.e. $x,\xi$. If we assume $F^i(g)_0=0$ and use again (\ref{eq:SolutionLinearProblem}), we get $F^i(g)_1=0$ a.e. since $f^{0,i},M[g^i],\Psi^i[s,F(g)(s,0,\cdot))] \in D$ a.e. $s,x,\xi$. This proves (\ref{con:C1}). 
Using Jensen's inequality with the convex function $H$ gives
\begin{align}
\iint_{(0,\infty)\times \mathbb R}&H(F^i(g)(t,x,\xi),\xi)\;\text{d}x\text{d}\xi\nonumber\\
\le& \iint_{(0,\infty)\times\mathbb R}H(f^{0,i}(x-t\xi,\xi),\xi)e^{-t/\epsilon}\,\mathbbm 1_{\{x>t\xi\}}\;\text{d}x\text{d}\xi\nonumber\\
&+\iint_{(0,\infty)\times (0,\infty)}H(\Psi^i[t-x/\xi,F(g)(t-x/\xi,0,\cdot)](\xi),\xi)e^{-x/(\epsilon \xi)}\,\mathbbm 1_{\{x<t\xi\}}\;\text{d}x\text{d}\xi\nonumber\\
&+\frac{1}{\epsilon}\iint_{(0,\infty)\times \mathbb R}\int_0^{\min(t,x/\xi_+)}H(M[g^i](t-s,x-s\xi,\xi),\xi)e^{-s/\epsilon}\;\text{d}s\,\text{d}x\text{d}\xi\nonumber\\
=& \bigg( \iint_{(0,\infty)\times \mathbb R}H(f^{0,i}(x,\xi),\xi)\,\mathbbm 1_{\{x>-t\xi\}}\;\text{d}x\text{d}\xi\nonumber\\
&+\iint_{(0,t)\times (0,\infty)}H(\Psi^i[s,F(g)(s,0,\cdot)](\xi),\xi)e^{s/\epsilon}\;\text{d}\mu(s,\xi)\nonumber\\
&+\frac{1}{\epsilon}\iiint_{(0,t)\times(0,\infty)\times\mathbb R}H(M[g^i](s,x,\xi),\xi)e^{s/\epsilon}\,\mathbbm 1_{\{x>(s-t)\xi\}}\;\text{d}s\text{d}x\text{d}\xi\bigg)e^{-t/\epsilon}.\label{eq:EstimateHwithBC1}
\end{align}
On the other hand, we have
\begin{align}
\iint_{(0,t)\times (-\infty,0)}&H(F^j(g)(s,0,\xi),\xi)e^{s/\epsilon}\;\text{d}\mu(s,\xi)\nonumber\\
\le& \iint_{(0,t)\times (-\infty,0)}H(f^{0,j}(-s\xi,\xi),\xi)\;\text{d}\mu(s,\xi)\nonumber\\
&+\frac{1}{\epsilon}\iint_{(0,t)\times (-\infty,0)}\int_0^s H(M[g^{j}](s-r,-r\xi,\xi),\xi)e^{(s-r)/\epsilon}\;\text{d}r\,\text{d}\mu(s,\xi)\nonumber\\
=&\iint_{(0,\infty)\times(-\infty,0)} H(f^{0,j}(x,\xi),\xi)\mathbbm 1_{\{x<-t\xi\}}\;\text{d}x\text{d}\xi\nonumber\\
&+\frac{1}{\epsilon}\iiint_{(0,t)\times (0,\infty)\times(-\infty,0)}H(M[g^j](s,x,\xi),\xi)e^{s/\epsilon}\mathbbm 1_{\{x<(s-t)\xi\}}\;\text{d}s\text{d}x\text{d}\xi,\label{eq:EstimateHwithBC2}
\end{align}
by Jensen's inequality. These two estimates and the assumption on the energy production at the junction in (\ref{eq:PsiEntropyConservation}) lead to
\begin{align}
\sum_{i=1}^d A^i &\iint_{(0,\infty)\times\mathbb R}H(F^i(g)(t,x,\xi),\xi)\;\text{d}x\text{d}\xi\nonumber \\
&\le \sum_{i=1}^d A^i\Bigg( \iint_{(0,\infty)\times\mathbb R}H(f^{0,i}(x,\xi),\xi)\;\text{d}x\text{d}\xi \nonumber\\
&\quad+\frac{1}{\epsilon}\iiint_{(0,t)\times(0,\infty)\times\mathbb R}H(M[g^i](s,x,\xi),\xi)e^{s/\epsilon}\;\text{d}s\text{d}x\text{d}\xi\Bigg)\,e^{-t/\epsilon} +\int_0^t b_H(s) e^{(s-t)/\epsilon}\;\text{d}s\nonumber\\
&\le C_H(0)+\frac{1}{\epsilon}\int_0^t\int_0^s b_H(r) e^{(s-t)/\epsilon}\;\text{d}r\text{d}s+\int_0^t b_H(s)e^{(s-t)/\epsilon}\;\text{d}s= C_H(t).\label{eq:EstimateHwithBC3}
\end{align}
We used the entropy minimization principle and the definition of $C_H(t)$ for the second inequality and integration by parts for the equality. This proves (\ref{con:C2}) for all $t\in[0,T]$. The proof of (\ref{con:C3}) works the same but we use the bound on the mass production at the junction in (\ref{eq:PsiMassConservation}). Condition (\ref{con:C4}) is satisfied because $(M[g^1],...,M[g^d])\in C$.
\end{proof}
\begin{lemma}\label{lemma:PropertiesCandtildeC}
The sets $C$ and $\tilde C$ are convex and non-empty, $C$ is compact for the weak topology of $L^1((0,T)_t\times (0,\infty)_{\text{loc},x}\times \mathbb R_\xi)^d$ and $\tilde C$ is closed in $C([0,T],L^1((0,\infty)_{\text{loc},x}\times \mathbb R_\xi))^d$.
\end{lemma}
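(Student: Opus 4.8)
The plan is to treat the two sets in turn, disposing of convexity and non-emptiness quickly and reserving the weak compactness of $C$ for the main effort, since that is what ultimately drives everything.

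Convexity is structural. The set $D$ is convex, because a convex combination of two elements with positive zeroth component again has positive zeroth component and combinations with the origin stay in $D$; hence (\ref{con:C1}) is stable under convex combinations. Condition (\ref{con:C3}) is linear, and (\ref{con:C2}) is convex because $H(\cdot,\xi)$ is convex on $D$ by Proposition \ref{prop:ConvexityofSandH}(ii). So $C$ is convex. For $\tilde C$, the same reasoning handles (\ref{con:C1} -- \ref{con:C3}), while (\ref{con:C4}) exhibits $\tilde C$ as the preimage of the convex set $C/\epsilon$ under the affine map $g\mapsto(\partial_t g^i+\xi\partial_x g^i+g^i/\epsilon)_i$, so $\tilde C$ is convex as well. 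Non-emptiness of $C$ is immediate: the time-independent function $g(t,\cdot)=f^0$ satisfies (\ref{con:C1}) since $f^0\in D$, and (\ref{con:C2} -- \ref{con:C3}) because $C_H,C_0$ are nondecreasing in $t$ (as $b_H,b_0\ge0$), so $f^0\in C$. Lemma \ref{lemma:FinTildeC} then gives $F(f^0)\in\tilde C$, so $\tilde C\neq\emptyset$.

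For compactness of $C$ I would verify the Dunford--Pettis criterion on each slab $x\in(0,R)$ and diagonalise over $R\in\mathbb N$. For $g\in C$ the bounds (\ref{con:C2} -- \ref{con:C3}) hold with $C_H(t)\le C_H(T)$, $C_0(t)\le C_0(T)$, so Proposition \ref{prop:EstimaresHundfglobal}, integrated over $t\in(0,T)$, yields a uniform $L^1$ bound on $g$, the higher integrability $g^i_k\in L^{p_k}$ with $p_k>1$, and the moment bounds $\iiint\xi^2 g^i_0\le C$ and $\iiint|\xi|\,|g^i_1|\le C$. The $L^{p_k}$ bounds give equi-integrability via $\int_E|g^i_k|\le|E|^{1-1/p_k}\|g^i_k\|_{L^{p_k}}$, while the moment bounds give tightness in $\xi$ (for instance $\iint_{|\xi|>S}g^i_0\le S^{-2}\iint\xi^2 g^i_0\to0$); as $t$ and $x\in(0,R)$ range over a finite-measure set, this is exactly the Dunford--Pettis hypothesis, so $C$ is relatively weakly compact on each slab, hence relatively weakly sequentially compact in the stated $\mathrm{loc}$ topology after diagonal extraction. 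For weak closedness I would extend $H(\cdot,\xi)$ by $+\infty$ off $D$, so that (using $H\ge0$ and $H\to+\infty$ as $f_0\to0$ with $f_1\not\to0$) it is convex and lower semicontinuous on $\mathbb R^2$; then for every $\phi\in L^\infty(0,T)$ with $\phi\ge0$ the functional $g\mapsto\sum_i A^i\iiint\phi(t)H(g^i,\xi)$ is convex and weakly lower semicontinuous, and testing the a.e.\ inequality (\ref{con:C2}) against all such $\phi$ lets one pass (\ref{con:C2}) to a weak limit; finiteness of the limiting energy forces (\ref{con:C1}), and (\ref{con:C3}) passes by lower semicontinuity of the nonnegative linear functional together with monotone convergence in $R$. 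Thus the weak limit lies in $C$, and with relative weak compactness this gives weak compactness of $C$. This Dunford--Pettis verification is the technical heart of the lemma.

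Finally, for closedness of $\tilde C$ take $g_n\in\tilde C$ with $g_n\to g$ in $C([0,T],L^1((0,\infty)_{\mathrm{loc},x}\times\mathbb R))^d$; the limit is again time-continuous. For each fixed $t$ one has $g_n(t,\cdot)\to g(t,\cdot)$ strongly in $L^1_{\mathrm{loc}}$, so a subsequence converges a.e.\ and Fatou, with the $+\infty$-extension of $H$, yields (\ref{con:C1} -- \ref{con:C3}) for that $t$, hence for all $t\in[0,T]$. The remaining condition (\ref{con:C4}) is the main obstacle: writing $h_n=\epsilon\partial_t g_n+\epsilon\xi\partial_x g_n+g_n\in C$, strong convergence $g_n\to g$ forces $h_n\to h:=\epsilon\partial_t g+\epsilon\xi\partial_x g+g$ in $\mathcal D'$. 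Since $C$ is weakly compact, a subsequence of $h_n$ converges weakly in $L^1_{\mathrm{loc}}$ to some $\bar h\in C$; weak $L^1_{\mathrm{loc}}$ convergence implies $\mathcal D'$ convergence, so by uniqueness of distributional limits $h=\bar h\in C$, i.e.\ (\ref{con:C4}) holds and $g\in\tilde C$. The crux throughout is thus the weak compactness of $C$: it both makes the Dunford--Pettis step the real work and supplies the compactness needed to close the distributional constraint (\ref{con:C4}).
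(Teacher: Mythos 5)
Your proof is correct and follows the same overall skeleton as the paper: convexity from convexity of $H$ and linearity of the constraints, non-emptiness via $f^0$ and $F(f^0)$, relative weak compactness on slabs from the $L^{p_k}$ and $\xi$-moment bounds of Proposition \ref{prop:EstimaresHundfglobal} plus Dunford--Pettis, and closedness of \eqref{con:C4} by extracting a weakly convergent subsequence of the right-hand sides $h_n\in C$ and identifying the limit distributionally. The one place you genuinely diverge is the weak closedness of $C$: you extend $H(\cdot,\xi)$ by $+\infty$ off $D$ and invoke weak lower semicontinuity of the resulting convex, lsc normal integrand, so that finiteness of the limiting energy delivers \eqref{con:C1} and \eqref{con:C2} in one stroke. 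The paper instead uses Mazur's argument (a convex set is weakly closed iff strongly closed), passes to an a.e.\ convergent subsequence, and handles the degenerate set $\{\tilde g_0=0\}$ explicitly via the bound $|f_1|\le\sqrt{2H(f,\xi)f_0}$ of Lemma \ref{lemma:EstimatesHandf} together with Fatou and Cauchy--Schwarz. Your route is marginally slicker in that it avoids the separate treatment of $\{\tilde g_0=0\}$, at the cost of having to verify that the $+\infty$-extension is indeed lsc at the boundary $\{f_0=0,\,f_1\neq 0\}$ (which holds because of the $f_1^2/f_0$ term in \eqref{eq:defkineticenergy}) and of quoting a weak-lsc theorem for convex integrands rather than elementary Fatou; both arguments also need the final duality step in $t$ to recover the a.e.-in-$t$ form of \eqref{con:C2}--\eqref{con:C3}, which you correctly flag by testing against nonnegative $\phi\in L^\infty(0,T)$.
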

\begin{proof}
$C$ and $\tilde C$ are convex because $H$ is convex. The constant $f^{0}$ belongs to $C$ and $F(f^0)$ belongs to $\tilde C$ by Lemma \ref{lemma:FinTildeC}.
We continue with the compactness of $C$. We prove that $C_\Omega^i=\{g^i|_{x\in\Omega},g\in C\}$ is equi-integrable for a fixed compact set $\Omega\subset(0,\infty)$. Since Proposition \ref{prop:EstimaresHundfglobal}, $C_\Omega^i$ is uniformly bounded in $L^p((0,T)_t\times \Omega_x\times (-R,R))$ with $p>1$ and 
\begin{equation*}
\sup_{\tilde g\in C^i_\Omega}\iiint_{(0,T)\times\Omega\times \mathbb R\backslash [-R,R]} |\tilde g|\;\text{d}t\text{d}x\text{d}\xi\to 0 \quad \text{as } R\to \infty.
\end{equation*}  
Standard arguments imply the equi-integrability. Since Dunford-Pettis' theorem, the equi-integrability is equivalent to the relative compactness of $C^i_\Omega$ in $L^1((0,T)_t\times \Omega_x\times \mathbb R_\xi)$. It remains to prove that $C_{\Omega,i}$ is closed in weak $L^1((0,T)_t\times \Omega_x\times \mathbb R_\xi)$. Since $C_{\Omega,i}$ is convex, it is enough to show that $C_{\Omega,i}$ is closed in strong $L^1((0,T)_t\times \Omega_x\times \mathbb R_\xi)$. Let $(\tilde g_n)_n$ be a sequence in $C_{\Omega,i}$ which converges to $\tilde g\in L^1((0,T)_t\times \Omega_x\times \mathbb R_\xi)$, where $\tilde g_n$ and $\tilde g$ are extended by 0 outside of $\Omega$. We want to show that the extension of $\tilde g$ is in $C$ or equivalently $\tilde g\in C_{\Omega,i}$. After extraction of a subsequence we have $\tilde g_n(t,\cdot)\to\tilde g(t,\cdot)$ in $L^1(\Omega_x\times \mathbb R_\xi)$ and a.e. $x,\xi$, for a.e. $t\in(0,T)$. $(\tilde g_n)_0\ge 0$ implies $(\tilde g)_0\ge 0$. By Lemma \ref{lemma:EstimatesHandf}, Fatou's lemma and Cauchy-Schwarz' inequality, we get for a.e. $t\in(0,T)$ for any measurable set $\mathcal V\subset \Omega_x\times \mathbb R_\xi$ 
\begin{align*}
\iint_{\mathcal V} |(\tilde g_n)_1(t,x,\xi)|\;\text{d}x\text{d}\xi &\le \liminf_{n\to\infty}\iint_{\mathcal V} \sqrt{2 H(\tilde g_n,\xi)(\tilde g_n)_0}\;\text{d}x\text{d}\xi\\
&\le \liminf_{n\to\infty} \left( \frac{2 C_H(t)}{A^i} \iint_{\mathcal V}(\tilde g_n)_0\;\text{d}x\text{d}\xi\right)^{1/2}\\
&= \left( \frac{2 C_H(t)}{A^i} \iint_{\mathcal V} \tilde g_0\;\text{d}x\text{d}\xi \right)^{1/2}
\end{align*} 
Taking $\mathcal V=\{(x,\xi)\in \Omega_x\times \mathbb R_\xi,\,\tilde g_0(t,x,\xi)=0\}$, we obtain $\tilde g_1(t,x,\xi)=0$ a.e. in $\mathcal V$,  a.e. $t$. Thus, $g(t,x,\xi)\in D$ a.e.. Another argument with Fatou's lemma gives
\begin{align*}
A^i\iint_{\Omega\times\mathbb R}H(\tilde g(t,x,\xi),\xi)\;\text{d}x\text{d}\xi&=A^i\iint_{\tilde g_0>0}H(\tilde g(t,x,\xi),\xi)\;\text{d}x\text{d}\xi\\
&\le A^i\liminf_{n\to\infty}\iint_{\tilde g_0>0}H(g_n(t,x,\xi),\xi)\;\text{d}x\text{d}\xi\\
&\le C_H(t),
\end{align*}
but this is (\ref{con:C2}). $\tilde g\in L^{\infty}((0,T)_t,L^1( \Omega_x\times\mathbb R_\xi))$ and (\ref{con:C3}) follow with a similar application of Fatou's lemma. We conclude that $\tilde g\in C_{\Omega,i}$ which proves the weak compactness of $C$ in $L^1((0,T)_t\times(0,\infty)_{\text{loc},x}\times\mathbb R)^d$. 
The proof of the closedness of $\tilde C$ in $C([0,T]_t,L^1((0,\infty)_{\text{loc},x}\times\mathbb R_\xi))^d$ is similar. (\ref{con:C4}) follows from the compactness of $C$.
\end{proof}
\begin{lemma}
$F\colon \tilde C\to\tilde C$ is continuous with respect to $C([0,T]_t,L^1((0,\infty)_{\mathrm{loc},x}\times \mathbb R_\xi))^d$.
\end{lemma}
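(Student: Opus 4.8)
The plan is to deduce the continuity of $F$ directly from the stability statement in Proposition~\ref{prop:Stability}, upgrading its subsequential conclusion to convergence of the full sequence by the standard double-subsequence argument. First I would record that $F$ indeed maps $\tilde C$ into $\tilde C$: this is exactly the content of Lemma~\ref{lemma:FinTildeC} together with the inclusion $\tilde C\subset C$, so no extra work is needed for well-definedness, and I would also note that, by Lemma~\ref{lemma:SolutionLinearProblem}, $F(g)$ is the \emph{unique} solution of (\ref{eq:SolutionLinearProblem}) with $h^i=M[g^i]$. This uniqueness will be the device that lets me identify limits of subsequences.

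For the continuity, let $g_n\to g$ in $\tilde C$, i.e. in $C([0,T]_t,L^1((0,\infty)_{\mathrm{loc},x}\times\mathbb R_\xi))^d$. The first step is to pass from this convergence to convergence of the moments. Since uniform-in-time convergence implies convergence in $L^1((0,T)_t\times(0,\infty)_{\mathrm{loc},x}\times\mathbb R_\xi)^d$, and integration in $\xi$ is norm-nonincreasing from $L^1_{t,x,\xi}$ to $L^1_{t,x}$, the moments
\[
(\rho_n^i,\rho_n^i u_n^i)=\int_{\mathbb R} g_n^i\;\mathrm d\xi
\]
converge to $(\rho^i,\rho^i u^i)=\int_{\mathbb R} g^i\;\mathrm d\xi$ in $L^1((0,T)_t\times(0,\infty)_{\mathrm{loc},x})^d$. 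Moreover, because $g_n,g\in\tilde C\subset C$ they satisfy the energy and mass bounds (\ref{con:C2})--(\ref{con:C3}) with the same $C_H,C_0\in L^\infty_t(0,T)$, so all hypotheses of Proposition~\ref{prop:Stability} are met.

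The second step handles the fact that Proposition~\ref{prop:Stability} yields only a convergent subsequence. To conclude full convergence $F(g_n)\to F(g)$ I would argue by subsequences: given any subsequence $(g_{n_k})_k$, it still converges to $g$ in the above sense, so Proposition~\ref{prop:Stability} provides a further subsequence $(g_{n_{k_l}})_l$ with $F(g_{n_{k_l}})\to F(g)$ in $C([0,T]_t,L^1((0,\infty)_{\mathrm{loc},x}\times\mathbb R_\xi))^d$, the limit being unambiguously $F(g)$ by the uniqueness noted above. Since every subsequence of $(F(g_n))_n$ admits a further subsequence converging to the same limit $F(g)$, the whole sequence converges to $F(g)$, which is the asserted continuity.

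The genuinely hard analytic work has already been absorbed into Proposition~\ref{prop:Stability}: the splitting of the characteristics formula into the regions $\{x>t\xi\}$ and $\{x<t\xi\}$, the tail control in $\xi$ furnished by Proposition~\ref{prop:EstimaresHundfglobal}, and the use of the continuity assumption (\ref{eq:ContPsi}) on $\Psi$ to pass to the limit in the boundary term. Consequently the only real subtlety remaining is the passage from subsequential to full-sequence convergence, and this is precisely why identifying each subsequential limit as the unique $F(g)$ is essential; once that is in place the argument is routine.
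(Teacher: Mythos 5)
Your proposal is correct and follows essentially the same route as the paper: convergence in $C([0,T]_t,L^1((0,\infty)_{\mathrm{loc},x}\times\mathbb R_\xi))^d$ gives convergence of the moments, Proposition~\ref{prop:Stability} is then invoked, and the subsequential conclusion is upgraded to continuity. The paper compresses your final double-subsequence step into the phrase ``but this implies the continuity of $F$''; your explicit identification of every subsequential limit with the unique $F(g)$ from Lemma~\ref{lemma:SolutionLinearProblem} is exactly the justification that phrase relies on.
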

\begin{proof}
Let $g_n,g\in\tilde C$ with $g_n\to g$ in $C([0,T]_t,L^1((0,\infty)_{\text{loc},x}\times \mathbb R_\xi))^d$. With the notation of Proposition \ref{prop:Stability} we have $\rho_n\to \rho$ and $\rho_n u_n\to \rho u$ in $C([0,T]_t,L^1_{\text{loc},x}(0,\infty))^d$. Proposition \ref{prop:Stability} gives the existence of a subsequence such that $F(g_n)\to F(g)$ in $C([0,T]_t,L^1((0,\infty)_{\text{loc},x}\times \mathbb R_\xi))^d$, but this implies the continuity of $F$.
\end{proof}
\begin{lemma}
$F(\tilde C)$ is relatively compact in $C([0,T]_t,L^1((0,\infty)_{\mathrm{loc},x}\times \mathbb R_\xi))^d$.
\end{lemma}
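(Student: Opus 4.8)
\emph{Proof proposal.}
The plan is to apply the Arzel\`a--Ascoli theorem for functions with values in the Fr\'echet space $X:=L^1((0,\infty)_{\mathrm{loc},x}\times\mathbb R_\xi)^d$, whose topology is generated by the seminorms $g\mapsto\|g\|_{L^1(\Omega_x\times\mathbb R_\xi)}$ for compact $\Omega\subset(0,\infty)$. Relative compactness of $F(\tilde C)$ in $C([0,T]_t,X)$ is then equivalent to the two conditions: (i) for every fixed $t\in[0,T]$ the slice $\{F(g)(t,\cdot,\cdot):g\in\tilde C\}$ is relatively compact in $X$, and (ii) the family $\{t\mapsto F(g)(t,\cdot,\cdot):g\in\tilde C\}$ is uniformly equicontinuous. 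Since $F(\tilde C)\subset\tilde C$ and every element of $\tilde C$ satisfies (\ref{con:C1} -- \ref{con:C3}), the uniform bounds of Proposition \ref{prop:EstimaresHundfglobal} (the local $L^{p_k}$ bounds together with the decay estimates for $\int\xi^2 f_0$ and $\int|\xi||f_1|$) apply simultaneously to all members of $F(\tilde C)$; this is the source of all the uniformity we shall need.

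For (i) I would first use these bounds to deduce equi-integrability in $(x,\xi)$ on every slab $\Omega_x\times\mathbb R_\xi$ (uniform integrability from the local $L^{p_k}$ control and tightness in $\xi$ from the moment bounds), exactly as in the proof of Lemma \ref{lemma:PropertiesCandtildeC}; by Dunford--Pettis this already yields weak relative compactness. To upgrade to strong $L^1$ compactness I would split $F(g)(t,\cdot,\cdot)$ according to the characteristic formula (\ref{eq:CharacteristicFormula}) into the transported initial/boundary data and the Duhamel integral of the Maxwellian. The transported datum is the single fixed function $f^{0}$, for which $L^1$-translation continuity is immediate; the relaxation term $\frac1\epsilon\int e^{-s/\epsilon}M[g^i](t-s,x-s\xi,\xi)\,\mathrm{d}s$ depends on $g$ only through the macroscopic moments $(\rho,\rho u)$ and carries an explicit, regular dependence on $\xi$, so that averaging along characteristics furnishes the $(x,\xi)$-translation equicontinuity required by the Fr\'echet--Kolmogorov criterion, while the boundary contribution is controlled through the continuity property (\ref{eq:ContPsi}) and the mass bound (\ref{eq:PsiMassConservation}).

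For (ii) I would estimate $\|F(g)(t+h,\cdot,\cdot)-F(g)(t,\cdot,\cdot)\|_{L^1(\Omega_x\times\mathbb R_\xi)}$ directly from (\ref{eq:CharacteristicFormula}) and the $L^1$-stability bound of Lemma \ref{lemma:SolutionLinearProblem}: the transport part contributes a spatial shift by $h\xi$, made uniformly small using the tightness in $\xi$ together with the equi-integrability established for (i), whereas the relaxation source contributes a term of order $h/\epsilon$ bounded by the uniform $L^1$ bound on $M[g]$ coming from Corollary \ref{cor:EntropyMinimizationPrinciple} and Proposition \ref{prop:EstimaresHundfglobal}. This produces a modulus of continuity $\omega(h)\to 0$ as $h\to 0$ independent of $g\in\tilde C$. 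The main obstacle is precisely this uniformity, and more generally the passage from the easy weak $L^1$ compactness to genuine strong compactness: unlike for a single function, controlling the spatial-shift and translation terms for the entire family hinges on the special structure of the Maxwellian source (its dependence on $g$ only through moments) together with the uniform moment and equi-integrability estimates, and it is here that Proposition \ref{prop:EstimaresHundfglobal} does the decisive work. Once (i) and (ii) are in place, Arzel\`a--Ascoli gives the relative compactness of $F(\tilde C)$, which together with the convexity and closedness of $\tilde C$ from Lemma \ref{lemma:PropertiesCandtildeC} and the continuity of $F$ sets up the Schauder fixed point argument completing the construction of a BGK solution.
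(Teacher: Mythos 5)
There is a genuine gap at the heart of your step (i), the strong $L^1$ compactness of the slices. The Arzel\`a--Ascoli framework is fine in principle, but the mechanism you propose for upgrading the weak (Dunford--Pettis) compactness to strong compactness does not work. The Duhamel term $\frac1\epsilon\int e^{-s/\epsilon}M[g^i](t-s,x-s\xi,\xi)\,\mathrm{d}s$ is a convolution of $M[g^i]$ with an $L^1$ kernel only along the characteristic direction $(1,\xi)$ in $(t,x)$; this yields translation continuity in that single direction and nothing in the transverse directions, and the explicit $\xi$-dependence of the Maxwellian does not help, because $M[g^i](t,x,\xi)=M(\rho(t,x),u(t,x),\xi)$ and the moments $(\rho,\rho u)$ of $g\in\tilde C$ are a priori only \emph{weakly} compact in $L^1_{t,x}$. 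Convolving a merely weakly compact family along characteristics does not produce a strongly compact family, so ``averaging along characteristics'' cannot furnish the Fr\'echet--Kolmogorov translation estimates you invoke. The decisive missing ingredient is the velocity averaging lemma: since every $g\in\tilde C$ satisfies the transport relation (\ref{con:C4}), the compactness averaging lemma of \cite{GLPS1988} applies on each pipeline and shows that the velocity averages $\int_{\mathbb R}g_n\,\mathrm{d}\xi$ of any sequence $g_n\in\tilde C$ are strongly compact in $L^1((0,T)_t\times(0,\infty)_{\mathrm{loc},x})^d$. You never use (\ref{con:C4}), which is precisely the structural hypothesis that makes this possible.

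This is how the paper argues: extract $g_n\rightharpoonup g$ weakly using the compactness of $C$ (Lemma \ref{lemma:PropertiesCandtildeC}), upgrade $\rho_n\to\rho$ and $\rho_nu_n\to\rho u$ to strong $L^1$ convergence by velocity averaging via (\ref{con:C4}), and then invoke Proposition \ref{prop:Stability}, which already packages the statement that strong convergence of the moments forces $F(g_n)\to F(g)$ in $C([0,T]_t,L^1((0,\infty)_{\mathrm{loc},x}\times\mathbb R_\xi))^d$ --- including the boundary contribution on $\{x<t\xi\}$, which in your sketch is only estimated in size rather than shown to converge. With that route no separate Arzel\`a--Ascoli argument is needed; your part (ii) equicontinuity estimate is essentially correct for fixed $\epsilon$ but becomes superfluous.
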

\begin{proof}
Let $\{F(g_n),\,n\in\mathbb N\}$ be a sequence in $F(\tilde C)$. Since $\tilde C\subset C$ and Lemma \ref{lemma:PropertiesCandtildeC}, there exists $g\in C$ and a subsequence such that $g_n\rightharpoonup g$ in weak $L^1((0,T)_t\times (0,\infty)_{\text{loc},x}\times \mathbb R_\xi)^d$. Then, with the notation of Proposition \ref{prop:Stability}, $\rho_n\rightharpoonup\rho$ and $\rho_n u_n \rightharpoonup \rho u$ in weak $L^1((0,T)_t\times (0,\infty)_{\text{loc},x})^d$ and by (\ref{con:C4}), we have a $h_n\in C$ such that
\begin{equation*}
\partial_t g_n^i+\xi\partial_x g^i_n+\frac{g_n^i}{\epsilon}=\frac{h_n^i}{\epsilon}.
\end{equation*}
The compactness averaging lemma of \cite{GLPS1988} applied to every pipeline and the equi-integrability of $g$ imply that $\int_{\mathbb R}g_n(t,x,\xi)\;\text{d}\xi$ is compact in $L^1_{\text{loc}}((0,T)_t\times (0,\infty)_x)^d=L^1((0,T)_t\times (0,\infty)_{\text{loc},x})^d$. We conclude that $\rho_n\to \rho$ and $\rho_n u_n\to \rho u$ in strong $L^1((0,T)\times (0,\infty)_{\text{loc},x})^d$. Proposition \ref{prop:Stability} gives the existence of a subsequence such that $F(g_n)\to F(g)$ in $C([0,T]_t,L^1((0,\infty)_{\text{loc},x}\times \mathbb R_\xi))^d$.
\end{proof}
\begin{proof}[Proof of Theorem \ref{thm:ExistenceKineticModel}]
We apply the Tychonoff-Schauder fixed point theorem to $F\colon \tilde C\to\tilde C$.\linebreak $C([0,T]_t,L^1((0,\infty)_{\text{loc},x}\times\mathbb R_\xi))^d$ is a locally convex topological vector space. $\tilde C$ is a non-empty, closed, convex subset of $C([0,T]_t,L^1((0,\infty)_{\text{loc},x}\times\mathbb R_\xi))^d$, $F\colon\tilde C\to\tilde C$ is continuous and $F(\tilde C)$ is relatively compact. We obtain the existence of a fixed point $f\in \tilde C$ verifying $F^i(f)=f^i,i=1,...,d$. This gives the existence of a solution to the kinetic model in $[0,T]$ for every $T>0$. Extracting a diagonal subsequence gives a global (in time) solution. (\ref{eq:DifferentialEquationMoments}) follows from integrating (\ref{eq:KineticEquation}) over $\mathbb R_\xi$ since Proposition \ref{prop:EstimaresHundfglobal}. The estimates\linebreak (\ref{eq:SolutionMassConservation} -- \ref{eq:SolutionEntropyConservation}) follow immediately from the fact that $f\in \tilde C$ or more precisely from the conditions (\ref{con:C2}) and (\ref{con:C3}) for the fixed point $f\in\tilde C$.
\end{proof}
\begin{remark}
Notice that the local mass and energy estimates (\ref{eq:PsiMassConservation} -- \ref{eq:PsiEntropyConservation}) are used to prove (\ref{eq:EstimateHwithBC3}) or more precisely to prove the conditions (\ref{con:C2} -- \ref{con:C3}) for $F(g)$. The local estimates are essential to prove the contraction property $F(g)\in\tilde C$ and they give enough compactness to use the fixed point theorem. Furthermore, they imply the global estimates (\ref{eq:SolutionMassConservation} -- \ref{eq:SolutionEntropyConservation}).
\end{remark}
\section{Maximum principle}
\label{sec:MaximumPrinciple}
In this section, we prove kinetic invariance and a maximum principle for a subclass of coupling conditions which are compatible with the so-called kinetic invariant domains. 
\begin{definition}
We call $(\tilde D_\xi^1,\dots,\tilde D_\xi^d)$ a family of kinetic invariant domains for $\Psi$ if
\begin{gather}
\text{ for all }i,\quad f^{0,i}(x,\xi)\in\tilde D^i_\xi, \quad \text{ a.e. } x,\xi,
\end{gather}
implies
\begin{equation}
\text{ for all } t,i,\quad f^i(t,x,\xi)\in \tilde D^i_\xi, \quad \text{ a.e. } x,\xi,
\end{equation}
where $(f^1,...,f^d)$ is the solution obtained in Theorem \ref{thm:ExistenceKineticModel}.
\end{definition}
We start with a characterization of kinetic invariant domains by certain entropies. Notice that the first equivalency was already shown in \cite{BeBo2002Kinetic}.
\begin{lemma}\label{lemma:CharacterizationKineticInvariantDomainsEntropy}
Let $f^i\in D,\,\xi\in \mathbb R$ and $-\infty<\omega_{\min}^i<\omega_{\max}^i<\infty$. The following assertions are equivalent:
\begin{enumerate}[label=(\roman*)]
	\item $f^i\in \tilde D_\xi^i=\{f\in D;\,f=0 \text{ or } \omega^i_{\min}\le\omega_1(f,\xi)\le\omega_2(f,\xi)\le \omega^i_{\max}\},$
	\item $H_{S_M^i}(f^i,\xi)\le 0$ and $H_{S_m^i}(f^i,\xi)\le 0$,\\
	where $S_M^i(v)=(v-\omega_{\max}^i)_+^2$ and $S_m^i(v)=(\omega_{\min}^i-v)_+^2$,
	\item $H_{S_\omega^i}(f^i,\xi)\le 0$, where $S_\omega^i(v)=S_M^i(v)+S_m^i(v)$.
\end{enumerate}
Furthermore, $S_M^i,\,S_m^i,\,S_\omega^i$ are positive, convex and of class $C^1$.
\end{lemma}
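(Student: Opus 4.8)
The plan is to treat the three assertions by exploiting the integral representation $H_S(f^i,\xi)=\int_{\mathbb R}\Phi(\rho(f^i,\xi),u(f^i,\xi),\xi,v)\,S(v)\,\mathrm dv$ for $f^i\neq 0$, together with the two structural features of the kernel recalled in Section~\ref{sec:KineticModel}: $\Phi\ge 0$, and (for $f^i\neq 0$, equivalently $\omega_1<\xi<\omega_2$ by Lemma~\ref{lemma:Bijectionfandomega}) that $\Phi(\rho,u,\xi,\cdot)$ is supported in $v\in(\omega_1,\omega_2)$, where I abbreviate $\omega_j=\omega_j(f^i,\xi)$. I would dispose of the regularity and sign claims first, as they are elementary: $v\mapsto(v-a)_+^2$ is nonnegative with continuous derivative $2(v-a)_+$, which is nondecreasing, so the function is convex and of class $C^1$; the same holds for $(a-v)_+^2$, and sums preserve all three properties, giving the assertion for $S_M^i$, $S_m^i$ and $S_\omega^i=S_M^i+S_m^i$ (here ``positive'' is understood as nonnegative).

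I would prove (ii)$\Leftrightarrow$(iii) next, as it is the cleanest part. Since $S\mapsto H_S(f^i,\xi)$ is linear (the integrand is linear in $S$, and $H_S(0,\xi)=0$ for every $S$), we have $H_{S_\omega^i}(f^i,\xi)=H_{S_M^i}(f^i,\xi)+H_{S_m^i}(f^i,\xi)$. Moreover $S_M^i,S_m^i\ge 0$ and $\Phi\ge 0$ force each summand to be nonnegative. Hence (ii), which asserts that both summands are $\le 0$, immediately yields their sum $\le 0$, i.e.\ (iii); conversely, if the nonnegative sum is $\le 0$, then each nonnegative summand vanishes, so in particular each is $\le 0$, which is (ii).

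For (i)$\Leftrightarrow$(ii) I would first handle $f^i=0$, where both sides hold trivially since $0\in\tilde D_\xi^i$ and $H_S(0,\xi)=0$. For $f^i\neq 0$, Lemma~\ref{lemma:Bijectionfandomega} gives $\omega_1<\xi<\omega_2$, and since $\Phi\ge 0$, $S_M^i\ge 0$, we have $H_{S_M^i}(f^i,\xi)\ge 0$; thus $H_{S_M^i}(f^i,\xi)\le 0$ is equivalent to $H_{S_M^i}(f^i,\xi)=0$, i.e.\ to $\Phi(\rho,u,\xi,\cdot)\,S_M^i$ vanishing $\mathrm dv$-a.e. Because $S_M^i(v)>0$ exactly for $v>\omega_{\max}^i$, this holds iff $\Phi(\rho,u,\xi,\cdot)$ vanishes a.e.\ on $(\omega_{\max}^i,\omega_2)$, i.e.\ (given the support in $(\omega_1,\omega_2)$) iff $\omega_2\le\omega_{\max}^i$. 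Symmetrically, $H_{S_m^i}(f^i,\xi)\le 0$ iff $\omega_1\ge\omega_{\min}^i$, and conjoining the two conditions gives precisely $\omega_{\min}^i\le\omega_1\le\omega_2\le\omega_{\max}^i$, i.e.\ $f^i\in\tilde D_\xi^i$. This equivalence is also recorded in \cite{BeBo2002Kinetic}.

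The only non-routine point, and the one I expect to be the main obstacle, is the claim that $\Phi(\rho,u,\xi,\cdot)$ is strictly positive a.e.\ on $(\omega_1,\omega_2)$ when $\omega_1<\xi<\omega_2$; without it, vanishing of the integral would not force $\omega_2\le\omega_{\max}^i$. By the explicit formula for $\Phi$, this reduces to $\Upsilon_{\lambda-1}(z)>0$, hence to $z>1$, for $v\in(\omega_1,\omega_2)$ with $v\neq\xi$. I would verify this via the affine normalization $\omega_1=-1$, $\omega_2=1$, under which $z=(1-\xi v)/|\xi-v|$, and then use the identity $(1-\xi v)^2-(\xi-v)^2=(1-\xi^2)(1-v^2)>0$ together with $1-\xi v>0$ to conclude $z>1$. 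This strict positivity, already available in the isentropic kinetic setting of \cite{BeBo2002Kinetic}, closes the ``only if'' direction and completes the proof.
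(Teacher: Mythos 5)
Your proof is correct and follows essentially the same route as the paper: establish nonnegativity of $H_{S_M^i}$, $H_{S_m^i}$, $H_{S_\omega^i}$, reduce each inequality to the a.e.\ vanishing of $\Phi\cdot S$ on $(\omega_1,\omega_2)$, and deduce (ii)$\Leftrightarrow$(iii) from the fact that a sum of nonnegative terms is nonpositive iff both terms are. The only difference is that you explicitly verify the strict positivity of $\Phi$ on $(\omega_1,\omega_2)$ via the affine normalization and the identity $(1-\xi v)^2-(\xi-v)^2=(1-\xi^2)(1-v^2)$, a step the paper leaves implicit by citing \cite{BeBo2002Kinetic}; your verification is correct and harmless.
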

\begin{proof}
One easily checks that $S_M^i,\,S_m^i,\,S_\omega^i$ are positive, convex, of class $C^1$ and that the corresponding kinetic entropies are positive.
For $\xi\in\mathbb R$, we have 
\begin{align*}
H_{S_M^i}(f^i,\xi)\le 0 &\iff f^i=0 \text{ or }\Phi(\rho(f^i,\xi),u(f^i,\xi),\xi,v)(v-\omega^i_{\max})_+^2=0\text{ a.e. }v\\
&\iff f^i=0 \text{ or }(v-\omega^i_{\max})_+=0 \text{ a.e. in }(\omega_1(f^i,\xi),\omega_2(f^i,\xi))\\
&\iff f^i=0 \text{ or }\omega_2(f^i,\xi)\le \omega^i_{\max}.
\end{align*}
A similar result holds for $S_m^i$ and we get
\begin{equation*}
H_{S_M^i}(f^i,\xi)\le 0 \text{ and }H_{S_m^i}(f^i,\xi)\le 0 \iff f^i\in \tilde D^i_{\xi}.
\end{equation*}
The second equivalence relation follows from the fact, that $H_{S_M^i},\,H_{S_m^i},\,H_{S_\omega^i}\ge 0$.
\end{proof}
\begin{theorem}\label{thm:MaximumPrinciple}
Assume that all assumptions in Theorem \ref{thm:ExistenceKineticModel} hold true and
\begin{equation}
\tilde D^i_\xi=\{f\in D;\,f=0 \text{ or } \omega^i_{\min}\le\omega_1(f,\xi)\le\omega_2(f,\xi)\le \omega^i_{\max}\},
\end{equation}
for $-\infty<\omega_{\min}^i<\omega_{\max}^i<\infty$.
Let $f^{0,i}(x,\xi)\in \tilde D^i_\xi$ a.e. $x,\xi$ and for a.e. $t\in(0,\infty)$
\begin{equation}\label{eq:DefPsiKineticInvariant}
\sum_{i=1}^d A^i\int_0^\infty |\xi|\ H_{S_\omega^i}(\Psi^i[t,g](\xi),\xi)\;\mathrm{d}\xi\,
\le\sum_{i=1}^d A^i \int_{-\infty}^0|\xi|\ H_{S_\omega^i}(g^i(\xi),\xi)\;\mathrm{d}\xi,
\end{equation}
for all $g\in L^1_\mu((-\infty,0)_\xi,D)^d$. $(\tilde D^1_\xi,\dots,\tilde D^d_\xi)$ is a family of convex kinetic invariant domains for $\Psi$.
The sets $\tilde D^i_\xi$ are associated with the invariant domains 
\begin{equation}
\tilde D^i=\{(\rho,u)\in [0,\infty)\times\mathbb R;\,\rho=0 \text{ or }\omega^i_{\min}\le\omega_1(\rho,u)\le\omega_2(\rho,u)\le\omega^i_{\max}\}
\end{equation}
of the isentropic gas equations (\ref{eq:MacroscopicEquation}) in the following sense:
\begin{enumerate}[label=(\roman*)]
  \item If $(\rho,u)\in \tilde D^i$, then $M(\rho,u,\xi)\in\tilde D^i_\xi$ for all $\xi\in\mathbb R$.
  \item For any $f\in L^1(\mathbb R_\xi)$ such that $f(\xi)\in \tilde D^i_\xi$ a.e. $\xi$, the averages $(\rho,\rho u)=\int_{\mathbb R}f(\xi)\;\text{d}\xi$ verify $(\rho,u)\in\tilde D$.
\end{enumerate} 
Furthermore, if $\xi\not\in [\omega^i_{\min},\omega^i_{\max}]$, then $\tilde D^i_\xi =\{0\}$.
\end{theorem}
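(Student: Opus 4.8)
The plan is to reduce every assertion to the single entropy inequality furnished by Lemma \ref{lemma:CharacterizationKineticInvariantDomainsEntropy}: since $H_{S_\omega^i}\ge 0$, one has $f\in\tilde D^i_\xi \iff H_{S_\omega^i}(f,\xi)\le 0 \iff H_{S_\omega^i}(f,\xi)=0$. Convexity of $\tilde D^i_\xi$ is then immediate, because $D$ is convex and $H_{S_\omega^i}(\cdot,\xi)$ is convex on $D$ by Proposition \ref{prop:ConvexityofSandH}(ii) (as $S_\omega^i$ is convex), so $\tilde D^i_\xi$ is the intersection of $D$ with a sublevel set of a convex function, hence convex. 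It remains to prove the dynamic invariance and the static correspondence with $\tilde D^i$.

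For the invariance I would track the global functional $\mathcal H(t)=\sum_{i=1}^d A^i\iint_{(0,\infty)\times\mathbb R}H_{S_\omega^i}(f^i(t,x,\xi),\xi)\,\mathrm dx\,\mathrm d\xi$, which is finite since $|S_\omega^i(v)|\le B(1+v^2)$ bounds $H_{S_\omega^i}$ by $f_0$ and the energy $H$, both integrable for the solution $f\in\tilde C$; the hypothesis $f^{0,i}\in\tilde D^i_\xi$ gives $\mathcal H(0)=0$. Because $f=F(f)$, I would insert $f$ into the characteristic formula and repeat the Jensen estimates \eqref{eq:EstimateHwithBC1}--\eqref{eq:EstimateHwithBC2} with $H_{S_\omega^i}$ in place of $H$, which is legitimate by its convexity. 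The outgoing coupling flux $\sum_i A^i\int_0^\infty|\xi|\,H_{S_\omega^i}(\Psi^i[t,f(t,0,\cdot)](\xi),\xi)\,\mathrm d\xi$ is absorbed by the incoming flux through assumption \eqref{eq:DefPsiKineticInvariant} --- crucially with no production term --- and the incoming flux is itself controlled via \eqref{eq:EstimateHwithBC2} by the (vanishing) initial contribution and the Maxwellian source, to which I apply the entropy minimization principle of Corollary \ref{cor:EntropyMinimizationPrinciple} (valid since $S_\omega^i$ is convex, $C^1$ and of quadratic growth), giving $\int_{\mathbb R}H_{S_\omega^i}(M[f^i](s,x,\cdot),\xi)\,\mathrm d\xi\le\int_{\mathbb R}H_{S_\omega^i}(f^i(s,x,\cdot),\xi)\,\mathrm d\xi$ pointwise in $(s,x)$. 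Assembling these exactly as in \eqref{eq:EstimateHwithBC3} with $b_H=0$ yields the closed inequality $\mathcal H(t)\le\tfrac1\epsilon\int_0^t e^{(s-t)/\epsilon}\mathcal H(s)\,\mathrm ds$; writing $\phi(t)=e^{t/\epsilon}\mathcal H(t)\ge 0$ reduces it to $\phi(t)\le\tfrac1\epsilon\int_0^t\phi(s)\,\mathrm ds$, so Gronwall forces $\phi\equiv 0$, hence $\mathcal H\equiv 0$, and therefore $H_{S_\omega^i}(f^i(t,x,\xi),\xi)=0$ a.e., i.e.\ $f^i(t,x,\xi)\in\tilde D^i_\xi$ a.e. I expect the trace bookkeeping behind this flux balance --- the handling of the region $\{x<t\xi\}$ as in Lemma \ref{lemma:FinTildeC}, together with checking that the junction and boundary terms produce no net entropy --- to be the main obstacle, the remaining steps being direct adaptations.

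Finally I would establish the correspondence with $\tilde D^i$ and the degeneracy. For (i), fix $(\rho,u)\in\tilde D^i$ with $\rho>0$; for every $\xi$ the kernel $v\mapsto\Phi(\rho,u,\xi,v)$ vanishes outside $(\omega_1(\rho,u),\omega_2(\rho,u))\subseteq[\omega_{\min}^i,\omega_{\max}^i]$, on which $S_\omega^i=0$, so $H_{S_\omega^i}(M(\rho,u,\xi),\xi)=\int_{\mathbb R}\Phi\,S_\omega^i\,\mathrm dv=0$ and $M(\rho,u,\xi)\in\tilde D^i_\xi$ (the case $\rho=0$ is trivial since $M=0$). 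For (ii), $f(\xi)\in\tilde D^i_\xi$ a.e.\ gives $\int_{\mathbb R}H_{S_\omega^i}(f(\xi),\xi)\,\mathrm d\xi=0$; since $M[f]$ and $f$ share the moments $(\rho,\rho u)$ by \eqref{eq:0thMomentMaxwellian}, entropy minimization yields $\eta_{S_\omega^i}(\rho,u)=\int_{\mathbb R}H_{S_\omega^i}(M[f](\xi),\xi)\,\mathrm d\xi\le 0$, and since $\eta_{S_\omega^i}(\rho,u)=\int_{\mathbb R}\chi(\rho,v-u)S_\omega^i(v)\,\mathrm dv\ge 0$ with nonnegative integrand, it must vanish, forcing $S_\omega^i=0$ on the support $(\omega_1(\rho,u),\omega_2(\rho,u))$ of $\chi$ and hence $(\rho,u)\in\tilde D^i$. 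The degeneracy statement follows from Lemma \ref{lemma:Bijectionfandomega}: any nonzero $f\in\tilde D^i_\xi$ has $f_0>0$, whence $\omega_1(f,\xi)<\xi<\omega_2(f,\xi)$ and thus $\xi\in(\omega_{\min}^i,\omega_{\max}^i)$; contrapositively, $\xi\notin[\omega_{\min}^i,\omega_{\max}^i]$ forces $f=0$, so $\tilde D^i_\xi=\{0\}$.
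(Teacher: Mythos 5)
Your proof is correct and follows essentially the same route as the paper: convexity via Proposition \ref{prop:ConvexityofSandH} and Lemma \ref{lemma:CharacterizationKineticInvariantDomainsEntropy}, and kinetic invariance by rerunning the Jensen/characteristics estimates (\ref{eq:EstimateHwithBC1}--\ref{eq:EstimateHwithBC3}) with $H_{S_\omega^i}$ in place of $H$ and $b=0$. The only differences are in level of detail, and they favour you: the paper compresses the closure of the flux balance into ``we proceed as in (\ref{eq:EstimateHwithBC1}--\ref{eq:EstimateHwithBC3})'' and states the resulting inequality per pipeline, whereas you correctly work with the $A^i$-weighted sum $\mathcal H(t)$ (which is what assumption (\ref{eq:DefPsiKineticInvariant}) actually controls) and make the Gr\"onwall step $\phi(t)\le\tfrac1\epsilon\int_0^t\phi(s)\,\mathrm ds\Rightarrow\phi\equiv0$ explicit for the fixed point $f=F(f)$; and you prove assertions (i), (ii) and the degeneracy $\tilde D^i_\xi=\{0\}$ for $\xi\notin[\omega^i_{\min},\omega^i_{\max}]$ directly from the kernel representation, the entropy minimization principle and Lemma \ref{lemma:Bijectionfandomega}, where the paper simply cites \cite[Theorem 1.4]{BeBo2002Kinetic}. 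Your direct arguments for these points are sound (note that the finiteness of $\int H(f(\xi),\xi)\,\mathrm d\xi$ needed for Corollary \ref{cor:EntropyMinimizationPrinciple} in (ii) follows from the compact $\xi$-support and boundedness forced by $f(\xi)\in\tilde D^i_\xi$, as you implicitly use).
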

\begin{proof}
Since Proposition \ref{prop:ConvexityofSandH} and Lemma \ref{lemma:CharacterizationKineticInvariantDomainsEntropy}, $\tilde D^i_\xi$ and $\tilde D^1_\xi\times ... \times \tilde D^d_\xi$ are convex. Let $f$ be the solution obtained in Theorem \ref{thm:ExistenceKineticModel}.
For the kinetic invariance, we proceed as in (\ref{eq:EstimateHwithBC1} -- \ref{eq:EstimateHwithBC3}) and get 
\begin{align*}
&\!\!\iint_{(0,\infty)\times\mathbb R}H_{S_\omega^i}( f^i(t,x,\xi),\xi)\;\text{d}x\text{d}\xi\\
&\le\iint_{(0,\infty)\times\mathbb R}H_{S_\omega^i}( f^i(0,x,\xi),\xi)\;\text{d}x\text{d}\xi+\iint_{(0,t)\times (0,\infty)}H_{S_\omega^i}( f^i(t,0,\xi),\xi)\;\text{d}t\text{d}\xi=0
\end{align*}
for all $t\in[0,\infty)$. With Lemma \ref{lemma:CharacterizationKineticInvariantDomainsEntropy}, we conclude that $(\tilde D^1_\xi,...,\tilde D^d_\xi)$ is a family of kinetic invariant domains for $\Psi$. The relation between $\tilde D_\xi^i$ and $\tilde D^i$ was proven in \cite[Theorem 1.4]{BeBo2002Kinetic}.
\end{proof}
\begin{remark}
Since we introduced the additional assumption (\ref{eq:DefPsiKineticInvariant}), the coupled half-space solutions depend only on $\Psi[t,g]$ with $g\in\bigtimes_{i=1}^d L_\mu^1((-\infty,0)_\xi,\tilde D_\xi^i)$. Therefore, it is equivalent to define a coupling function
\begin{equation*}
\tilde \Psi\colon (0,\infty)_t\times \bigtimes_{i=1}^d L^1_\mu((-\infty,0)_\xi,D_\xi^i)\to\bigtimes_{i=1}^d L^1_\mu((0,\infty)_\xi,D_\xi^i)
\end{equation*}
and to extend it by zero for $g\not\in \bigtimes_{i=1}^d L^1_\mu((-\infty,0)_\xi,D_\xi^i)$.
\end{remark}
\begin{proposition}\label{prop:UniformLinftyBoundsinEpsilon}
Let $f^{0}$ and $\Psi$ be as in Theorem \ref{thm:MaximumPrinciple}. Then $\rho_\epsilon^i,\, u^i_\epsilon,\,f^i_\epsilon,\,M[f^i_\epsilon]$ are uniformly bounded in $L^\infty$. Furthermore, we have $\operatorname{supp}_\xi f^i_\epsilon\subset[\omega^i_{\min},\omega^i_{\max}]$, $\operatorname{supp}_\xi M[f^i_\epsilon]\subset[\omega^i_{\min},\omega_{\max}^i]$ and $|(f_\epsilon^i)_1|\le A (f_\epsilon^i)_0$ for a constant $A>0$.
\end{proposition}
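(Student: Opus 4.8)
The plan is to read off every claimed bound from the kinetic invariance of Theorem~\ref{thm:MaximumPrinciple}, together with the explicit description of $\tilde D^i_\xi$ and $\tilde D^i$ through the Riemann invariants. The hypotheses here are exactly those of Theorem~\ref{thm:MaximumPrinciple}, so that result applies and shows that $(\tilde D^1_\xi,\dots,\tilde D^d_\xi)$ is a family of kinetic invariant domains for $\Psi$; consequently the solution $f_\epsilon$ of Theorem~\ref{thm:ExistenceKineticModel} satisfies $f^i_\epsilon(t,x,\xi)\in\tilde D^i_\xi$ for a.e.\ $t,x,\xi$. Since $\tilde D^i_\xi$ does not depend on $\epsilon$, all bounds obtained below will be uniform in $\epsilon$.

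First I would settle the support statement and the $L^\infty$-bound for $f^i_\epsilon$. By Theorem~\ref{thm:MaximumPrinciple} one has $\tilde D^i_\xi=\{0\}$ whenever $\xi\notin[\omega^i_{\min},\omega^i_{\max}]$, so $f^i_\epsilon(t,x,\xi)\in\tilde D^i_\xi$ forces $f^i_\epsilon=0$ there; this is exactly $\operatorname{supp}_\xi f^i_\epsilon\subset[\omega^i_{\min},\omega^i_{\max}]$. For $f^i_\epsilon\neq 0$, the inverse map $R$ of Lemma~\ref{lemma:Bijectionfandomega} gives
\begin{equation*}
(f^i_\epsilon)_0=c_{\gamma,\kappa}\Big(\big(\tfrac{\omega_2-\omega_1}{2}\big)^2-\big(\xi-\tfrac{\omega_1+\omega_2}{2}\big)^2\Big)_+^\lambda,
\end{equation*}
where $\omega_j=\omega_j(f^i_\epsilon,\xi)$. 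The constraint $\omega^i_{\min}\le\omega_1\le\omega_2\le\omega^i_{\max}$ bounds the first term by $(\tfrac{\omega^i_{\max}-\omega^i_{\min}}{2})^2$ and leaves the second nonnegative, so $(f^i_\epsilon)_0\le c_{\gamma,\kappa}(\tfrac{\omega^i_{\max}-\omega^i_{\min}}{2})^{2\lambda}$. From the definition of $u(f,\xi)$ one has $(f^i_\epsilon)_1=\big((1-\theta)u(f^i_\epsilon,\xi)+\theta\xi\big)(f^i_\epsilon)_0$; since $u(f^i_\epsilon,\xi)=\tfrac{\omega_1+\omega_2}{2}$ and $\xi$ both lie in $[\omega^i_{\min},\omega^i_{\max}]$, their convex combination has modulus at most $A:=\max_i\max(|\omega^i_{\min}|,|\omega^i_{\max}|)$. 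This delivers at once the $L^\infty$-bound on $(f^i_\epsilon)_1$ and the inequality $|(f^i_\epsilon)_1|\le A(f^i_\epsilon)_0$.

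It remains to treat the macroscopic quantities and the Maxwellian. Applying Theorem~\ref{thm:MaximumPrinciple}(ii) to $f^i_\epsilon(t,x,\cdot)$ gives $(\rho^i_\epsilon,u^i_\epsilon)(t,x)\in\tilde D^i$; unpacking $\omega^i_{\min}\le u-a_\gamma\rho^\theta$ and $u+a_\gamma\rho^\theta\le\omega^i_{\max}$ yields $u^i_\epsilon=\tfrac{\omega_1+\omega_2}{2}\in[\omega^i_{\min},\omega^i_{\max}]$ and $a_\gamma(\rho^i_\epsilon)^\theta=\tfrac{\omega_2-\omega_1}{2}\le\tfrac{\omega^i_{\max}-\omega^i_{\min}}{2}$, hence uniform $L^\infty$-bounds on $u^i_\epsilon$ and, using $\theta>0$, on $\rho^i_\epsilon$. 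With $(\rho^i_\epsilon,u^i_\epsilon)\in\tilde D^i$ in hand, Theorem~\ref{thm:MaximumPrinciple}(i) gives $M[f^i_\epsilon]=M(\rho^i_\epsilon,u^i_\epsilon,\xi)\in\tilde D^i_\xi$ for all $\xi$, so $M[f^i_\epsilon]$ inherits both the support statement and the $L^\infty$-bound by the identical computation with $R$. The argument is essentially bookkeeping with the explicit Riemann-invariant formulas, so I do not expect a genuine obstacle; the one point deserving care is that the bound on $(f^i_\epsilon)_1$ truly needs the support inclusion, since the factor $\theta\xi$ is a priori unbounded and it is precisely the property $\tilde D^i_\xi=\{0\}$ for $\xi\notin[\omega^i_{\min},\omega^i_{\max}]$ that confines $\xi$ to a compact interval, while a secondary technical matter is fixing the convention $u^i_\epsilon:=0$ on the vacuum set $\{\rho^i_\epsilon=0\}$, on which $u^i_\epsilon$ is otherwise undefined.
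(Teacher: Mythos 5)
Your proof is correct and follows exactly the route the paper intends: the paper's own proof is the single sentence ``This follows from Theorem \ref{thm:MaximumPrinciple} and Lemma \ref{lemma:Bijectionfandomega},'' and your argument is precisely the spelled-out version of that, using the kinetic invariance to place $f^i_\epsilon$ in $\tilde D^i_\xi$ and then reading off all bounds from the Riemann-invariant parametrization $R$. Your closing caveats (the support inclusion being needed to control the $\theta\xi$ term, and the vacuum convention for $u^i_\epsilon$) are apt but minor.
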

\begin{proof}
This follows from Theorem \ref{thm:MaximumPrinciple} and Lemma \ref{lemma:Bijectionfandomega}.
\end{proof}
\begin{corollary}\label{cor:UniformIntegralBoundinEpsilon}
Let $S\colon\mathbb R\to\mathbb R$ be convex, of class $C^1$ and satisfies $|S(v)|\le B\,(1+v^2)$ for a constant $B\ge 0$.
\begin{enumerate}[label=(\roman*)]
	\item The sequence $(t,x,\xi)\mapsto H_S(f^i_\epsilon(t,x,\xi),\xi)$ is bounded in $C([0,\infty)_t,L^1((0,\infty)_x\times \mathbb R_\xi))$. 
	\item The sequence $(t,x,\xi)\mapsto H_S(f^i_\epsilon(t,x,\xi),\xi)$ is bounded in $C([0,\infty)_x,L^1_\mu((0,T)_t\times \mathbb R_\xi))$.
\end{enumerate}
\end{corollary}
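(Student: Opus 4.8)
The plan is to reduce both assertions to a single pointwise bound of $H_S$ by the zeroth moment $(f^i_\epsilon)_0$, exploiting that the solutions constructed in Theorem \ref{thm:MaximumPrinciple} live in the kinetic invariant domains. Fix $i$ and set $R_i=\max(|\omega^i_{\min}|,|\omega^i_{\max}|)$ and $C_S^i=\sup_{v\in[\omega^i_{\min},\omega^i_{\max}]}|S(v)|$, which is finite since $S$ is continuous. By Theorem \ref{thm:MaximumPrinciple}, $f^i_\epsilon(t,x,\xi)\in\tilde D^i_\xi$ for a.e. $t,x,\xi$, so either $f^i_\epsilon=0$ or $\omega^i_{\min}\le\omega_1(f^i_\epsilon,\xi)\le\omega_2(f^i_\epsilon,\xi)\le\omega^i_{\max}$. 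The kernel $\Phi(\rho,u,\xi,\cdot)$ is supported in $(\omega_1,\omega_2)$ and satisfies $\int_\mathbb R\Phi(\rho,u,\xi,v)\,\mathrm dv=M(\rho,u,\xi)_0$; since the inverse moments obey $M(\rho(f^i_\epsilon,\xi),u(f^i_\epsilon,\xi),\xi)=f^i_\epsilon$, the integrand is supported in $v\in[\omega^i_{\min},\omega^i_{\max}]$ and integrates to $(f^i_\epsilon)_0$. Hence, for $f^i_\epsilon\neq 0$,
\begin{equation*}
|H_S(f^i_\epsilon,\xi)|\le\int_{\mathbb R}\Phi(\rho(f^i_\epsilon,\xi),u(f^i_\epsilon,\xi),\xi,v)\,|S(v)|\,\mathrm dv\le C_S^i\,(f^i_\epsilon)_0,
\end{equation*}
and the same bound holds trivially when $f^i_\epsilon=0$. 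This is the central step: the compactness of the $v$-support turns the quadratic growth $|S(v)|\le B(1+v^2)$ into the bounded multiplier $C_S^i$.

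For assertion (i) I would integrate this bound in $(x,\xi)$. Using $\int_\mathbb R(f^i_\epsilon)_0(t,x,\xi)\,\mathrm d\xi=\rho^i_\epsilon(t,x)$ together with the uniform (in $\epsilon$) global mass estimate (\ref{eq:SolutionMassConservation}), one gets
\begin{equation*}
\iint_{(0,\infty)\times\mathbb R}|H_S(f^i_\epsilon(t,x,\xi),\xi)|\,\mathrm dx\mathrm d\xi\le C_S^i\iint_{(0,\infty)\times\mathbb R}(f^i_\epsilon)_0(t,x,\xi)\,\mathrm dx\mathrm d\xi\le \frac{C_S^i}{A^i}\,C_0(t),
\end{equation*}
which is independent of $\epsilon$. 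The continuity $t\mapsto H_S(f^i_\epsilon(t,\cdot,\cdot),\cdot)\in L^1$ is inherited from $f^i_\epsilon\in C([0,\infty)_t,L^1)$ (Theorem \ref{thm:ExistenceKineticModel}): along a subsequence $f^i_\epsilon(t)\to f^i_\epsilon(t_0)$ a.e., $H_S(\cdot,\xi)$ is continuous on $\{f_0>0\}$ and at $0$ on $\{|f_1|\le A f_0\}$ by Proposition \ref{prop:SmoothnessPropertiesHandS}, and the domination $|H_S|\le C_S^i(f^i_\epsilon)_0$ with $(f^i_\epsilon)_0$ converging in $L^1$ permits a generalized dominated convergence argument.

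For assertion (ii) I would instead integrate in $(t,\xi)$ at fixed $x$ against $\mathrm d\mu=|\xi|\,\mathrm d\xi\,\mathrm dt$. Proposition \ref{prop:UniformLinftyBoundsinEpsilon} gives $\operatorname{supp}_\xi f^i_\epsilon\subset[\omega^i_{\min},\omega^i_{\max}]$, so $|\xi|\le R_i$ wherever the integrand does not vanish, whence
\begin{equation*}
\iint_{(0,T)\times\mathbb R}|\xi|\,|H_S(f^i_\epsilon(t,x,\xi),\xi)|\,\mathrm d\xi\mathrm dt\le C_S^i R_i\int_0^T\!\!\int_{\mathbb R}(f^i_\epsilon)_0\,\mathrm d\xi\mathrm dt=C_S^iR_i\int_0^T\rho^i_\epsilon(t,x)\,\mathrm dt\le C_S^iR_iT\,\|\rho^i_\epsilon\|_{L^\infty},
\end{equation*}
and $\|\rho^i_\epsilon\|_{L^\infty}$ is bounded uniformly in $\epsilon$ by Proposition \ref{prop:UniformLinftyBoundsinEpsilon}, the estimate being uniform in $x$ as well. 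Continuity in $x$ with values in $L^1_\mu$ is inherited from $f^i_\epsilon\in C([0,\infty)_x,L^1_\mu)$ by the same domination argument.

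The only genuinely delicate point is the central pointwise estimate, and specifically the justification that membership in $\tilde D^i_\xi$ confines the $v$-support of $\Phi(\rho(f^i_\epsilon,\xi),u(f^i_\epsilon,\xi),\xi,\cdot)$ to $[\omega^i_{\min},\omega^i_{\max}]$ via the bijection of Lemma \ref{lemma:Bijectionfandomega} and the definition of $\Phi$. Once this is secured, everything else is a direct combination of the mass estimate (\ref{eq:SolutionMassConservation}) for (i) and the $L^\infty$-bound on $\rho^i_\epsilon$ for (ii); the continuity statements follow verbatim from the regularity already recorded in Theorem \ref{thm:ExistenceKineticModel}.
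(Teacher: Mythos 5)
Your proposal is correct and follows essentially the same route as the paper: the uniform bound comes from the invariant-domain/$L^\infty$ information of Proposition \ref{prop:UniformLinftyBoundsinEpsilon} via a pointwise domination of $H_S$ (the paper uses $|H_S|\le B(f_0+2H)$ from $|S(v)|\le B(1+v^2)$, you use the compact $v$-support of $\Phi$ to get $|H_S|\le C_S^i f_0$ — an equivalent choice of dominating function), and the continuity in $t$ (resp.\ $x$) follows from $f^i_\epsilon\in C_t L^1$ (resp.\ $C_x L^1_\mu$), the continuity statements of Proposition \ref{prop:SmoothnessPropertiesHandS}, and dominated convergence. No gaps.
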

\begin{proof}
The boundedness in \textit{(i)} follows from Proposition \ref{prop:UniformLinftyBoundsinEpsilon}, the definition of $H$ and the upper bound on $S$. 
Lebesque's theorem, the continuity of $H_S$ in $\{f\in D,\,|f_1|\le A f_0\}$ (Proposition \ref{prop:SmoothnessPropertiesHandS}) and \linebreak $f^i_\epsilon\in C([0,\infty)_t,L^1((0,\infty)_x\times \mathbb R_\xi))$ give the continuity. 
Part \textit{(ii)} works similar but we use\linebreak $f^i_\epsilon\in C([0,\infty)_x,L_\mu^1((0,T)_t\times\mathbb R_\xi))$.
\end{proof}
We end the section with a relation for the kinetic entropy fluxes at the junction.
\begin{proposition}\label{prop:ExistencebSforLinftysolutions}
Let $f^0$ and $\Psi$ be as in Theorem \ref{thm:MaximumPrinciple}.
Let $\mathcal S=(S^1,\dots,S^d)$, with convex functions $S^i\colon\mathbb R\to\mathbb R$ of class $C^1$ and $|S^i(v)|\le B^i\,(1+v^2)$ for constants $B^i\ge 0$.
Then, there exists a function $b_{\mathcal S}\in L^\infty_t(0,\infty)\subset L^1_{\mathrm{loc},t}(0,\infty)$ such that for a.e. $t>0$
\begin{equation*}
\sum_{i=1}^d A^i \int_0^\infty |\xi|\,H_{S^i}(\Psi^i[t,g](\xi),\xi)\;\text{d}\xi\le  \sum_{i=1}^d A^i \int_0^\infty |\xi|\,H_{S^i}(g,\xi)\;\text{d}\xi+b_{\mathcal S}(t),
\end{equation*}
for all $g\in\bigtimes_{i=1}^d L^1_\mu((-\infty,0)_\xi,\tilde D^i_\xi)$. Furthermore, we have for a.e. $t\in(0,\infty)$
\begin{equation*}
\sum_{i=1}^d A^i \int_{\mathbb R} \xi\,H_{S^i}(\Psi^i[t,f(t,0,\cdot)](\xi),\xi)\;\text{d}\xi\le b_{\mathcal S}(t),
\end{equation*}
for all solutions $f$ obtained in Theorem \ref{thm:ExistenceKineticModel}.
\end{proposition}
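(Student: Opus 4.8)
The plan is to exploit the kinetic invariant domains: on $\tilde D_\xi^i$ every quantity entering the two inequalities is pointwise bounded by a constant depending only on $\omega_{\min}^i,\omega_{\max}^i$ (and on $\gamma,\kappa,\mathcal S$), so the entropy fluxes are controlled by \emph{constants} rather than by the incoming data. This is precisely what upgrades the bound from $L^1_{\mathrm{loc},t}$ to $L^\infty_t$, and it is the reason the mass/energy production terms $b_0,b_H$ never enter.

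First I would record the pointwise estimates on $\tilde D_\xi^i$. If $f\in\tilde D_\xi^i$ with $f\neq 0$, then $\omega_1(f,\xi)<\xi<\omega_2(f,\xi)$ together with $\omega_{\min}^i\le\omega_1\le\omega_2\le\omega_{\max}^i$ forces $\xi\in(\omega_{\min}^i,\omega_{\max}^i)$, $u=\tfrac12(\omega_1+\omega_2)\in[\omega_{\min}^i,\omega_{\max}^i]$, and via $\omega_2-\omega_1=2a_\gamma\rho^\theta$ a uniform bound $\rho\le\rho_i^{\max}$; hence $f_0=\chi(\rho,\xi-u)$ is bounded by a constant $\chi_i^{\max}$. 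Since $\int_{\mathbb R}\Phi\,\mathrm dv=f_0$ and $\operatorname{supp}_v\Phi\subseteq(\omega_1,\omega_2)\subseteq[\omega_{\min}^i,\omega_{\max}^i]$, the kernel representation of $H_{S^i}$ gives $|H_{S^i}(f,\xi)|\le M_i\,f_0\le M_i\chi_i^{\max}$, where $M_i:=\sup_{[\omega_{\min}^i,\omega_{\max}^i]}|S^i|<\infty$; moreover $H_{S^i}(f,\xi)=0$ unless $\xi\in[\omega_{\min}^i,\omega_{\max}^i]$ because there $\tilde D_\xi^i=\{0\}$. Integrating against $|\xi|\,\mathrm d\xi$ over this bounded $\xi$-support then bounds both $\int_0^\infty|\xi|\,|H_{S^i}(\cdot,\xi)|\,\mathrm d\xi$ and $\int_{-\infty}^0|\xi|\,|H_{S^i}(\cdot,\xi)|\,\mathrm d\xi$ by a constant $K_i$ depending only on $\omega_{\min}^i,\omega_{\max}^i,\gamma,\kappa$ and $M_i$, uniformly over functions valued in $\tilde D_\xi^i$.

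Next I would verify that $\Psi^i[t,g]$ takes values in $\tilde D_\xi^i$ whenever $g^i\in\tilde D_\xi^i$: by Lemma \ref{lemma:CharacterizationKineticInvariantDomainsEntropy} the incoming data satisfy $H_{S_\omega^i}(g^i,\xi)\le 0$, so the right-hand side of (\ref{eq:DefPsiKineticInvariant}) is $\le 0$; since $H_{S_\omega^i}\ge 0$, the left-hand side is a sum of nonnegative terms that must vanish, forcing $H_{S_\omega^i}(\Psi^i[t,g](\xi),\xi)=0$ and hence $\Psi^i[t,g](\xi)\in\tilde D_\xi^i$ for a.e. $\xi>0$. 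Consequently both $\Psi^i[t,g]$ and $g^i$ obey the estimates of the previous step, and subtracting the two bounded quantities yields the first inequality with $b_{\mathcal S}:=2\sum_{i=1}^d A^iK_i$, a constant (hence in $L^\infty_t$) depending only on $\mathcal S$ and on the data of $\Psi$ through $\omega_{\min}^i,\omega_{\max}^i$.

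Finally, for the second inequality I would specialize $g$ to the incoming boundary trace $g^i=f^i(t,0,\cdot)\big|_{\xi<0}$. By Theorem \ref{thm:MaximumPrinciple}, $f^i(t,x,\cdot)\in\tilde D_\xi^i$ for a.e.\ $x$; since $f^i\in C([0,\infty)_x,L_\mu^1)$, passing to an a.e.\ convergent subsequence as $x\to0^+$ and using that the relevant limits stay in the cone $|f_1|\le Af_0$ (where $H_{S_\omega^i}$ is continuous by Proposition \ref{prop:SmoothnessPropertiesHandS}) shows the trace again lies in $\tilde D_\xi^i$; in particular the boundary flux is finite. Splitting $\int_{\mathbb R}\xi\,H_{S^i}(f^i(t,0,\xi),\xi)\,\mathrm d\xi$ into $\xi>0$ and $\xi<0$ and inserting the coupling condition $f^i(t,0,\xi)=\Psi^i[t,f(t,0,\cdot)](\xi)$ on $\xi>0$ identifies this net flux with the excess $\sum_i A^i\int_0^\infty|\xi|H_{S^i}(\Psi^i[t,g],\xi)\,\mathrm d\xi-\sum_i A^i\int_{-\infty}^0|\xi|H_{S^i}(g^i,\xi)\,\mathrm d\xi$ at $g=f(t,0,\cdot)$, which the first inequality bounds by $b_{\mathcal S}(t)$. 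I expect the only genuine obstacle to be the justification that the boundary trace indeed lies in the invariant domain and that all $\xi$-integrals converge; once the pointwise bounds on $\tilde D_\xi^i$ are established, the rest is elementary.
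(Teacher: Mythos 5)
Your proposal is correct and follows essentially the same route as the paper: use (\ref{eq:DefPsiKineticInvariant}) together with Lemma \ref{lemma:CharacterizationKineticInvariantDomainsEntropy} to force $\Psi^i[t,g](\xi)\in\tilde D^i_\xi$, exploit the uniform $L^\infty$-bound and compact $\xi$-support on $\tilde D^i_\xi$ to bound both entropy-flux integrals by constants, and then specialize $g$ to the incoming boundary trace. The only (harmless) deviation is that you obtain $f^i(t,0,\cdot)\in\tilde D^i_\xi$ from the maximum principle plus continuity in $x$, whereas the paper derives it directly from the characteristics estimate (\ref{eq:EstimateHwithBC2}) applied to $H_{S^i_\omega}$; both yield the same conclusion.
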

\begin{proof}
Since (\ref{eq:DefPsiKineticInvariant}), we have $\Psi^i[t,g]\in \tilde D_\xi^i$ for a.e. $t,\xi$ and get a $L^\infty$-bound for $\Psi^i[t,g]$ independent of $g$. The first part follows from the definition of $H$ and $|S^i(v)|\le B^i(1+v^2)$.
As in (\ref{eq:EstimateHwithBC2}) and with Lemma \ref{lemma:CharacterizationKineticInvariantDomainsEntropy}, we get $f(t,0,\xi)\in \tilde D_\xi^i$ a.e. $t,\xi$. The claim follows from the first part.
\end{proof}
\section{Relaxation to the Macroscopic Limit}
\label{sec:RelaxationtotheMacroscopicLimit}
In this section we prove convergence of the kinetic solutions for $\epsilon\to 0$ based on the arguments in \cite{BeBo2002Boundary}.
Until the end of this section, we assume that the assumptions in Theorem \ref{thm:ExistenceMacroscopicEquation} are satisfied. 
\subsection{Interior Relaxation}
Since part (v) in Proposition \ref{prop:SmoothnessPropertiesHandS}, Proposition \ref{prop:SubdifferentialInequality} and Proposition \ref{prop:UniformLinftyBoundsinEpsilon}, we have $H'_S(f^i_\epsilon,\xi)\in L^\infty((0,T)_t\times (0,\infty)_x\times \mathbb R_\xi)$. A modification of Theorem 1.1 in \cite{Bo2001} for vector-valued equations gives
\begin{equation}
\partial_t (H_S(f_\epsilon^i,\xi))+\xi\partial_x(H_S(f_\epsilon^i,\xi))=H'_S(f^i_\epsilon,\xi)\frac{M[f_\epsilon^i]-f_\epsilon^i}{\epsilon},
\end{equation}
and $M[f^i_\epsilon]-f^i_\epsilon=0$ a.e. where $f^i_\epsilon=0$.
Let $\varphi^i\in\mathcal D([0,\infty)_t\times[0,\infty)_x)$. Using the continuity properties in Corollary \ref{cor:UniformIntegralBoundinEpsilon} justifies
\begingroup
\allowdisplaybreaks
\begin{align}
\begin{split}
&-\iiint_{(0,\infty)^2\times \mathbb R}H_S(f_\epsilon^i,\xi)\,\partial_t\varphi^i\;\text{d}t\text{d}x\text{d}\xi -\iint_{(0,\infty)\times \mathbb R}H_S(f_\epsilon^i(t=0),\xi)\,\varphi^i(0,x)\;\text{d}x\text{d}\xi\\
&-\iiint_{(0,\infty)^2\times \mathbb R}\xi\,H_S(f_\epsilon^i,\xi)\,\partial_x\varphi^i\;\text{d}t\text{d}x\text{d}\xi -\iint_{(0,\infty)\times \mathbb R}\xi\,H_S(f_\epsilon^i(x=0),\xi)\,\varphi^i(t,0)\;\text{d}t\text{d}\xi\\
&=\iiint_{(0,\infty)^2\times \mathbb R}H'_S(f_\epsilon^i,\xi)\frac{M[f^i_\epsilon]-f^i_\epsilon}{\epsilon}\varphi^i\;\text{d}t\text{d}x\text{d}\xi\\
&=\iiint_{(0,\infty)^2\times \mathbb R}\left(H'_S(f_\epsilon^i,\xi)-T_S(\rho^i_\epsilon,u^i_\epsilon)\right)\frac{M[f^i_\epsilon]-f^i_\epsilon}{\epsilon}\varphi^i\;\text{d}t\text{d}x\text{d}\xi ,\label{eq:IntegralKineticDissipation}
\end{split}
\end{align}
\endgroup
where we used that $T_S(\rho_\epsilon,u_\epsilon)$ is independent of $\xi$ and the definition of the Maxwellian for the second equality.
By Proposition \ref{prop:SubdifferentialInequality}, we obtain that
\begin{gather}
-\iiint_{(0,\infty)^2\times \mathbb R}H_S(f_\epsilon^i,\xi)\,\partial_t\varphi^i\;\text{d}t\text{d}x\text{d}\xi-\iiint_{(0,\infty)^2\times \mathbb R}\xi\,H_S(f_\epsilon^i,\xi)\,\partial_x \varphi^i\;\text{d}t\text{d}x\text{d}\xi\nonumber\\
-\iint_{(0,\infty)\times \mathbb R}\xi\,H_S(f_\epsilon^i(x=0),\xi)\,\varphi^i(t,0)\;\text{d}t\text{d}\xi\nonumber\le 0,\nonumber
\end{gather}
for $\varphi^i\in\mathcal D((0,\infty)_t\times [0,\infty)_x),\,\varphi^i\ge 0$ or equivalently
\begin{gather}
\begin{split}
-\iint_{(0,\infty)^2}\eta_S(\rho_\epsilon^i,u_\epsilon^i)\,\partial_t\varphi^i\;\text{d}t\text{d}x-\iint_{(0,\infty)^2}G_S(\rho_\epsilon^i,u_\epsilon^i)\,\partial_x\varphi^i\;\text{d}t\text{d}x\label{eq:KineticIntegralEntorpyEquation}\\
-\iint_{(0,\infty)\times \mathbb R}\xi\,H_S(f_\epsilon^i(x=0),\xi)\,\varphi^i(t,0)\;\text{d}t\text{d}\xi\,-\,\langle R_{S,\epsilon}^i,\varphi^i\rangle \,\le 0,
\end{split}
\end{gather}
with 
\begin{align}
\begin{split}
\langle R_{S,\epsilon}^i,\varphi^i\rangle=\iiint_{(0,\infty)^2\times \mathbb R}(H_S(f_\epsilon^i,\xi)-H_S(M[f_\epsilon^i],\xi))\,\partial_t \varphi^i\;\text{d}t\text{d}x\text{d}\xi\\
+\iiint_{(0,\infty)^2\times \mathbb R}\xi (H_S(f_\epsilon^i,\xi)-H_S(M[f_\epsilon^i],\xi))\,\partial_x\varphi^i\;\text{d}t\text{d}x\text{d}\xi.
\end{split}
\end{align}
Since (\ref{eq:IntegralKineticDissipation}) and Corollary \ref{cor:UniformIntegralBoundinEpsilon},
\begin{align}
\begin{split}
\iint_{(0,\infty)\times \mathbb R}& H_S(f_\epsilon^i(t=T),\xi)\;\text{d}x\text{d}\xi -\iint_{(0,\infty)\times \mathbb R}H_S(f_\epsilon^i(t=0),\xi)\;\text{d}x\text{d}\xi\\
&\quad -\iiint_{(0,\infty)\times \mathbb R}\xi\,H_S(f_\epsilon^i(x=0),\xi)\;\text{d}t\text{d}\xi\\
&=\iiint_{(0,T)\times(0,\infty)\times \mathbb R}\left(H'_S(f_\epsilon^i,\xi)-T_S(\rho^i_\epsilon,u^i_\epsilon)\right)\frac{M[f^i_\epsilon]-f^i_\epsilon}{\epsilon}\;\text{d}t\text{d}x\text{d}\xi.
\end{split}
\end{align}
Proposition \ref{prop:UniformLinftyBoundsinEpsilon} implies that
\begin{align}
\begin{split}\label{eq:boundedHprimeL1norm}
Q_{S,\epsilon}^i=\int_{\mathbb R}(H'(f_\epsilon^i,\xi)-T_{v^2/2}(\rho_\epsilon^i,u_\epsilon^i))\frac{M[f_\epsilon^i]-f_\epsilon^i}{\epsilon}\;\text{d}\xi,\quad \epsilon>0,\\
\text{is uniformly bounded in $L^1((0,T)_{t}\times (0,\infty)_x)$ for every $T>0$.}
\end{split}
\end{align}
This, together with the fact that $f_\epsilon^i,M[f_\epsilon^i]$ are bounded in $L^\infty((0,\infty)_t\times(0,\infty)_x\times \mathbb R_\xi)$ and the property of uniform compact support implies $f_\epsilon^i-M[f_\epsilon^i]\to 0$ a.e. $t,x,\xi$ with the arguments of Proposition 6.2 in \cite{BeBo2002Kinetic}. \\
Next, we prove the convergence $R^i_{S,\epsilon}\to 0$ in $W^{-1,p}_{\mathrm{loc}}$. We have that
\begin{equation}\label{eq:ConvergenceRemainderR1}
0\le \int_{\mathbb R}H_S(f^i_\epsilon,\xi)-H_S(M[f_\epsilon^i],\xi)\;\text{d}\xi\le\int_{\mathbb R}H'_S(f_\epsilon^i,\xi)\cdot (f_\epsilon^i-M[f_\epsilon^i])\;\text{d}\xi\to 0
\end{equation}
in $L^1_{\mathrm{loc}}((0,\infty)_t\times (0,\infty)_x)$, since (\ref{eq:boundedHprimeL1norm}). The same holds true for 
\begin{equation}\label{eq:ConvergenceRemainderR2}
\int_{\mathbb R}\xi\,(H_S(f^i_\epsilon,\xi)-H_S(M[f_\epsilon^i],\xi))\;\text{d}\xi,
\end{equation}
because $f_\epsilon^i-M[f_\epsilon^i]\to 0$ a.e. and the fact that $f_\epsilon$ has uniform compact support w.r.t. $\xi$ (see Proposition 6.4 in \cite{BeBo2002Kinetic}).
Since we have also boundedness of (\ref{eq:ConvergenceRemainderR1} -- \ref{eq:ConvergenceRemainderR2}) in $L^\infty((0,T)_t\times(0,R)_x)$, we get convergence in $L^p_{\mathrm{loc}}((0,\infty)_t\times(0,\infty)_x)$ for any $1\le p<\infty$.
We conclude that $ R_{S,\epsilon}^i\to 0$ in $W^{-1,p}_{\mathrm{loc}}$ for any $1<p<\infty$. Then, (\ref{eq:IntegralKineticDissipation}) with $\varphi^i\in\mathcal D((0,\infty)_t\times(0,\infty)_x)$ reads
\begin{equation}
\partial_t\eta_S(\rho_\epsilon^i,u_\epsilon^i)+\partial_x G_S(\rho_\epsilon^i,u_\epsilon^i)= Q_{S,\epsilon}^i+R_{S,\epsilon}^i,
\end{equation}
where 
\begin{gather}
\begin{split}
Q_{S,\epsilon}^i \quad \text{ lies in a bounded set of the space of measures and}\\
R_{S,\epsilon}^i\to 0 \quad \text{in }W^{-1,p}_{loc}\text{ for any $1<p<\infty$ as $\epsilon\to 0$}.
\end{split}
\end{gather}
Since $\rho_\epsilon^i,u_\epsilon^i$ are bounded in $L^\infty$, we can apply the compensated compactness result of \cite{LPS1996}. We summarize that, up to a subsequence, $(\rho_\epsilon^i,\rho_\epsilon^i u_\epsilon^i)$ converge a.e. in $(0,\infty)\times \mathbb R$ to an entropy solution $(\rho^i,\rho^i u^i)$ of (\ref{eq:MacroscopicEquation}), (\ref{eq:EntropySolution}). Furthermore, we have $(\rho^i,\rho^i u^i)\in\tilde D^i$ a.e. $x,t$ and the initial data is attained in the sense $\overline{(\rho^i,\rho^i u^i)}(x=0)=\int_{\mathbb R}f^{0,i}\;\text{d}\xi$ of the weak trace. The weak entropy flux boundary traces $\overline{G_S(\rho^i,u^i)}(t,0)$ exist and are unique since Theorem \ref{thm:WeakTraceDivMeasure}.
\subsection{Boundary Relaxation}
Next, we consider the relaxation at the boundary. For $S\colon\mathbb R\to\mathbb R$ convex, of class $C^1$ with $|S(v)|\le B(1+v^2)$ and $\epsilon>0$, we define
\begin{equation}
\psi^i_{S,\epsilon}(t)=\int_\mathbb R \xi\, H_S(f^i_\epsilon(t,x,\xi),\xi)\;\text{d}\xi,\quad t>0.
\end{equation}
The sequence $(\psi^i_{S,\epsilon})_{\epsilon>0}$ is bounded in $L^\infty_t(0,\infty)$ and there exists $\psi_S^i\in L^\infty_t(0,\infty)$ such that 
\begin{equation}
\psi_{S,\epsilon}^i\rightharpoonup \psi^i_S \quad \text{in }L^\infty_{w*}(0,\infty)\quad \text{as }\epsilon\to 0,
\end{equation}
after passing if necessary to a subsequence. Next, we derive a relation between $\psi^i_S$ and the weak traces $\overline{G_S(\rho^i, u^i)}$.
\begin{proposition}\label{prop:InequalityTraces}
Let all assumptions of Theorem \ref{thm:ExistenceMacroscopicEquation} be satisfied and fix $S\colon\mathbb R\to\mathbb R$ convex, of class $C^1$, with $|S(v)|\le B(1+v^2)$, then
\begin{equation*}
\overline{G_S(\rho^i,u^i)}(t,0)\le \psi^i_S(t)\quad \text{a.e. }t>0.
\end{equation*}
Furthermore, we have equality if $S(v)\in\{\mathbbm 1,v\}$.
\end{proposition}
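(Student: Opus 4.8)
The plan is to pass to the limit $\epsilon\to0$ in the kinetic entropy inequality (\ref{eq:KineticIntegralEntorpyEquation}) and then to identify the resulting boundary term with the weak flux trace $\overline{G_S(\rho^i,u^i)}(t,0)$ via the Gauss--Green formula for divergence-measure fields (Theorem \ref{thm:WeakTraceDivMeasure}). Fix $\varphi^i\in\mathcal D((0,\infty)_t\times[0,\infty)_x)$ with $\varphi^i\ge0$. In the two interior integrals of (\ref{eq:KineticIntegralEntorpyEquation}) I would use the a.e.\ convergence $(\rho_\epsilon^i,\rho_\epsilon^iu_\epsilon^i)\to(\rho^i,\rho^iu^i)$ together with the uniform $L^\infty$ bounds of Proposition \ref{prop:UniformLinftyBoundsinEpsilon} and dominated convergence to obtain $\eta_S(\rho_\epsilon^i,u_\epsilon^i)\to\eta_S(\rho^i,u^i)$ and $G_S(\rho_\epsilon^i,u_\epsilon^i)\to G_S(\rho^i,u^i)$ in $L^1_{\mathrm{loc}}$. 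The boundary term equals $-\int_0^\infty\psi^i_{S,\epsilon}(t)\varphi^i(t,0)\,\mathrm dt$ and converges to $-\int_0^\infty\psi^i_S(t)\varphi^i(t,0)\,\mathrm dt$ by the weak-$*$ convergence $\psi^i_{S,\epsilon}\rightharpoonup\psi^i_S$, while $\langle R^i_{S,\epsilon},\varphi^i\rangle\to0$ since $R^i_{S,\epsilon}\to0$ in $W^{-1,p}_{\mathrm{loc}}$. This yields
\[
-\iint_{(0,\infty)^2}\big(\eta_S(\rho^i,u^i)\partial_t\varphi^i+G_S(\rho^i,u^i)\partial_x\varphi^i\big)\,\mathrm dt\,\mathrm dx\le\int_0^\infty\psi^i_S(t)\,\varphi^i(t,0)\,\mathrm dt.
\]

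Next I would exploit the divergence-measure structure. Since $(\rho^i,u^i)$ is an entropy solution remaining in the compact set $\tilde D^i$, the field $(\eta_S(\rho^i,u^i),G_S(\rho^i,u^i))$ is bounded and $\mu_S:=\partial_t\eta_S+\partial_xG_S\le0$ by (\ref{eq:EntropySolution}); moreover $\mu_S$ is a finite measure on $(0,T)\times(0,\infty)$, being the weak-$*$ limit of the measures $Q^i_{S,\epsilon}$ that are bounded in $L^1$ (cf.\ (\ref{eq:boundedHprimeL1norm}) and the discussion following it). Hence Theorem \ref{thm:WeakTraceDivMeasure} applies: the weak normal trace $\overline{G_S(\rho^i,u^i)}(t,0)$ exists and the Gauss--Green identity
\[
\langle\mu_S,\varphi^i\rangle=-\iint_{(0,\infty)^2}\big(\eta_S\,\partial_t\varphi^i+G_S\,\partial_x\varphi^i\big)\,\mathrm dt\,\mathrm dx-\int_0^\infty\overline{G_S(\rho^i,u^i)}(t,0)\,\varphi^i(t,0)\,\mathrm dt
\]
holds, where the sign reflects the outward normal $(0,-1)$ at $\{x=0\}$. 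Substituting this into the previous inequality removes the interior integral and gives
\[
\int_0^\infty\big(\overline{G_S(\rho^i,u^i)}(t,0)-\psi^i_S(t)\big)\varphi^i(t,0)\,\mathrm dt\le-\langle\mu_S,\varphi^i\rangle.
\]

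To conclude I would localize at the boundary. Choosing $\varphi^i(t,x)=\beta(t)\zeta(x/h)$ with $\beta\in\mathcal D((0,\infty)_t)$, $\beta\ge0$, and $\zeta\in C_c^\infty([0,\infty))$, $\zeta(0)=1$, $0\le\zeta\le1$, $\operatorname{supp}\zeta\subset[0,1]$, the left-hand side equals $\int_0^\infty(\overline{G_S(\rho^i,u^i)}(t,0)-\psi^i_S(t))\beta(t)\,\mathrm dt$, while the right-hand side is bounded by $\|\beta\|_\infty\,|\mu_S|\big((0,T)\times(0,h)\big)$ with $T=\sup\operatorname{supp}\beta$. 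Because $\mu_S$ is a finite measure on the open domain which does not charge the line $\{x=0\}$, continuity of measures from above gives $|\mu_S|((0,T)\times(0,h))\to0$ as $h\to0$, so letting $h\to0$ yields $\int_0^\infty(\overline{G_S(\rho^i,u^i)}(t,0)-\psi^i_S(t))\beta(t)\,\mathrm dt\le0$ for every $\beta\ge0$, i.e.\ the claim a.e.\ $t>0$. I expect the \emph{main obstacle} to be precisely this step: verifying that the interior entropy dissipation $\mu_S$ is a finite measure that does not concentrate on $\{x=0\}$, so that the localization kills it; this is where the uniform $L^1$ bound on $Q^i_{S,\epsilon}$ and the trace theory of Theorem \ref{thm:WeakTraceDivMeasure} are essential.

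Finally, for the equality statement when $S(v)\in\{\mathbbm 1,v\}$, I would apply the inequality just proved to both $S$ and $-S$. Both functions are convex, of class $C^1$ and satisfy $|{-S}|=|S|\le B(1+v^2)$, and since $H_S$, $G_S$, $\psi^i_S$ and the trace $\overline{G_S(\rho^i,u^i)}(t,0)$ all depend linearly on $S$, the inequality for $-S$ reads $\overline{G_S(\rho^i,u^i)}(t,0)\ge\psi^i_S(t)$. Combined with $\overline{G_S(\rho^i,u^i)}(t,0)\le\psi^i_S(t)$ this forces equality, consistent with the fact that for affine $S$ the macroscopic relations are exact conservation laws and $\mu_S=0$.
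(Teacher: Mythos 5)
Your proposal is correct and follows essentially the same route as the paper: pass to the limit in the kinetic entropy inequality (\ref{eq:KineticIntegralEntorpyEquation}), invoke Theorem \ref{thm:WeakTraceDivMeasure} to introduce the weak trace, and localize with a cutoff $\zeta(x/h)$, letting $h\to 0$ so that the finite interior dissipation measure does not contribute. Your explicit treatment of the equality case via applying the inequality to both $S$ and $-S$ (both convex when $S$ is affine) is a valid addition that the paper leaves implicit.
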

\begin{proof}
We recall from (\ref{eq:KineticIntegralEntorpyEquation}), that 
\begin{gather*}
-\iint_{(0,\infty)^2}\eta_S(\rho_\epsilon^i,u_\epsilon^i)\,\partial_t\varphi^i\;\text{d}t\text{d}x-\iint_{(0,\infty)^2}G_S(\rho_\epsilon^i,u_\epsilon^i)\,\partial_x\varphi^i\;\text{d}t\text{d}x\\
-\int_{(0,\infty)}\psi^i_{S,\epsilon}(t)\,\varphi^i(t,0)\;\text{d}t\,-\,\langle R_{S,\epsilon}^i,\varphi^i\rangle \,\le 0,
\end{gather*}
for $\varphi^i\in\mathcal D((0,\infty)_t\times[0,\infty)_x),\,\varphi^i\ge 0$.
Taking the limit gives
\begin{gather*}
-\iint_{(0,\infty)^2}\eta_S(\rho^i,u^i)\,\partial_t\varphi^i\;\text{d}t\text{d}x-\iint_{(0,\infty)^2}G_S(\rho^i,u^i)\,\partial_x\varphi^i\;\text{d}t\text{d}x\\
-\int_{(0,\infty)}\psi^i_S(t)\,\varphi^i(t,0)\;\text{d}t\,\le 0
\end{gather*}
for a subsequence $\epsilon\to 0$.
Using Theorem \ref{thm:WeakTraceDivMeasure} with $(\eta_S,G_S)$ leads to
\begin{gather*}
\iint_{(0,\infty)^2}\operatorname{div}_{t,x}(\eta_S(\rho^i,u^i),G_S(\rho^i,u^i))\,\varphi^i\;\text{d}t\text{d}x\\
+\int_0^\infty \left(\overline{G_S(\rho^i,u^i)}(t,0)- \psi_S^i(t)\right)\,\varphi^i(t,0)\;\text{d}t\,\le 0.
\end{gather*}
We set $\varphi^i(t,x)=\varphi^i_{1,h}(x)\,\varphi^i_{2}(t)$ with $\varphi^i_{1,h}(x)=1$ for $x\le h/2$, $\varphi^i_{1,h}(x)=0$ for $x\ge h$ and $|(\varphi^i_{1,h})'|\le C/ h$. We take the limit $h\to0$ with Lebesque's theorem for the measure $\operatorname{div}_{t,x}(\eta_S(\rho^i,u^i),G_S(\rho^i,u^i))$ and get
\begin{gather*}
\int_0^\infty \left(\overline{G_S(\rho^i,u^i)}(t,0)- \psi^i_S(t)\right)\,\varphi_2^i(t)\;\text{d}t\text{d}\xi\,\le 0,
\end{gather*}
for every $\varphi^i_2\in\mathcal D((0,\infty)_t)$.
\end{proof}
This completes the proof of Theorem \ref{thm:ExistenceMacroscopicEquation}.
\begin{proof}[Proof of Corollary \ref{cor:MonotonicityGammaLimit}]
The uniform bound on $\Gamma[t,\cdot]$ ensures that the obtained quantities are still bounded functions. 
The result follows from Proposition \ref{prop:InequalityTraces} and the monotonicity of $\Gamma[t,\cdot]$.
\end{proof}
\section{Examples}
\label{sec:Examples}
In this section, we give examples for coupling functions which fit in the framework of Theorem \ref{thm:ExistenceMacroscopicEquation}. In the first part, we define three general classes of coupling functions and derive some of their basic properties. In the second part, we give more explicit coupling and boundary conditions and show that they fit in our framework. We begin with a remark about the physical interpretation of the functions $b_{\Gamma,\mathcal S}$.
\begin{remark}Let all assumptions in Theorem \ref{thm:ExistenceMacroscopicEquation} be satisfied. Let $i_l,S_l, \,l=1,\dots,k$ and $\Gamma\colon (0,\infty)_t\times \mathbb R^k\to \mathbb R$ be as usual with
\begin{equation*}
\Gamma[t,\overline{G_{S_1}(\rho^{i_1},u^{i_1})}(t,0),\dots,\overline{G_{S_k}(\rho^{i_k},u^{i_k})}(t,0)]\le b_{\Gamma, \mathcal S}(t) \quad \text{a.e. }t>0,
\end{equation*}
where $b_{\Gamma, \mathcal S}\in L^\infty_t(0,\infty)$ is independent of the initial data.
It is important to observe that $b_{\Gamma, \mathcal S}$ depends strongly on the choice of the kinetic invariant domains $\tilde D^i_\xi$:\\
Set for example $d=2$ and $\Psi^i[t,g](\xi)=(g^j_0(-\xi),-g^j_1(-\xi)),\,t>0,\,\xi>0,\, i\neq j$. The best function $b_{\Gamma,\mathcal S}(t)$ for $\Gamma[t,G_{S^1},G_{S^2}]=\sum_{i=1}^2 A^i G_{S^i},\mathcal S=(\mathbbm 1,0)$ is given by $b_{\Gamma,\mathcal S}(t)=\sup\{\int_{(-\infty,0)} |\xi| g_0(\xi)\;\mathrm{d}\xi;\, g\in L^1((-\infty,0)_\xi,D_\xi^2)\}$, but this constant depends on $D_\xi^2$ and goes to infinity as $\omega_{\min}^2\to -\infty$.
For several examples in this section, we get functions $b_{\Gamma,\mathcal S}$ which are independent of the kinetic invariant domain and depend only on $\Psi$. Such a behavior was expectable, since the $L^\infty$-bounds on the initial data and the kinetic invariant domains $D_\xi^i$ were introduced for technical reasons and are unphysical.
\end{remark}
\subsection{Maxwellian Coupling Conditions}
\label{sec:MaxwellianCouplingConditions}
Since $\Psi$ is used to couple the half-space problems on the kinetic level, we expect that some information will be lost, if we take the limit. Therefore, we are especially interested in the behavior of half-moments of $f$ and $H_S(f,\xi)$. As in \cite{BeBo2002Boundary} it can be useful to define the outgoing data to be the Maxwellian of certain macroscopic variables $\hat\rho^{i},\,\hat u^{i}$ depending on the incoming data. For a given coupling condition $\Psi$, we construct a coupling function $\hat{\Psi}$ with Maxwellian outgoing data by
\begin{gather}
\begin{split}\label{eq:DefHatPsi}
&\hat{\Psi}^i[t,g](\xi)=M(\hat\rho^{i},\hat u^{i},\xi),\quad \xi>0, \text{ where }(\hat\rho^i,\hat u^{i}) \text{ satisfy} \\
&\int_0^\infty \xi \,M(\hat \rho^{i},\hat u^{i},\xi)\;\text{d}\xi=\int_0^\infty \xi \,\Psi^i[t,g]\;\text{d}\xi.
\end{split}
\end{gather}
We get the following result:
\begin{proposition}
Let $\Psi$ be defined as in (\ref{eq:DefPsi}). Then, $\hat\Psi\colon (0,\infty)\times L^1_\mu((-\infty,0)_{\xi},D)^d \to  L^1_\mu((0,\infty)_{\xi},D)^d$ as in (\ref{eq:DefHatPsi}) is well-defined.
Furthermore, if the assumptions in Theorem \ref{thm:ExistenceMacroscopicEquation} are satisfied for $\Psi$, then the same holds true for $\hat{\Psi}$.
\end{proposition}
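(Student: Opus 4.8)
The plan is to separate the statement into two parts: the well-definedness of $\hat\Psi$, and the transfer of the hypotheses of Theorem \ref{thm:ExistenceMacroscopicEquation} from $\Psi$ to $\hat\Psi$. Throughout I would write $\phi^i=\phi^i[t,g]:=\int_0^\infty\xi\,\Psi^i[t,g](\xi)\,\mathrm d\xi\in\mathbb R^2$ for the outgoing flux of $\Psi$. Since $\Psi^i[t,g](\xi)\in D$ forces $(\Psi^i)_0\ge0$ and $\xi>0$ on the range of integration, one has $\phi^i_0\ge0$; moreover $\phi^i_0=0$ forces $(\Psi^i)_0=0$ a.e.\ on $(0,\infty)$, hence $\Psi^i=0$ there and $\phi^i=0$. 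Thus the target flux always lies in $\{\phi_0>0\}\cup\{0\}$. Definition (\ref{eq:DefHatPsi}) asks to realise $\phi^i$ as the outgoing flux of a Maxwellian, i.e.\ to invert the flux map $\Lambda(\rho,u):=\int_0^\infty\xi\,M(\rho,u,\xi)\,\mathrm d\xi$ at $\phi^i$.

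The well-definedness is the main obstacle. I would show that $\Lambda$ is a homeomorphism from the non-outflow states $\{(\rho,u):\rho>0,\ u+a_\gamma\rho^\theta>0\}$ onto $\{\phi_0>0\}$, and extend it by sending $\phi^i=0$ to vacuum, where $M(0,\cdot,\xi)=0$. The explicit support $[\,u-a_\gamma\rho^\theta,\,u+a_\gamma\rho^\theta\,]$ of $\chi(\rho,\cdot-u)$ makes $\Lambda\equiv0$ in the supersonic-outflow region and $\Lambda=F(\rho,u)$ in the supersonic-inflow region, while on the subsonic region one computes that $\phi_1/\phi_0$ sweeps all of $\mathbb R$ (letting $u\to-\infty$ with $u+a_\gamma\rho^\theta\downarrow0$, resp.\ $u,\rho\to\infty$) and $\phi_0$ ranges over $(0,\infty)$ by scaling $\rho$; this gives surjectivity. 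Injectivity is the genuinely hard point and needs a global monotonicity or degree argument rather than a local inverse function theorem. This is exactly the Maxwellian reconstruction underlying the kinetic boundary conditions of \cite{BeBo2002Boundary}, whose computation I would borrow. Once $(\hat\rho^i,\hat u^i)$ is fixed, $\hat\Psi^i[t,g]=M(\hat\rho^i,\hat u^i,\cdot)\in D$ has compact $\xi$-support, so all its moments are finite and $\hat\Psi^i[t,g]\in L^1_\mu((0,\infty)_\xi,D)$, which is the first assertion.

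With $\Lambda^{-1}$ a homeomorphism, the continuity property (\ref{eq:ContPsi}) for $\hat\Psi$ follows by composing four continuous maps: the continuity (\ref{eq:ContPsi}) of $\Psi$; the bounded linear functional $f\mapsto\int_0^\infty\xi\,f\,\mathrm d\xi$ on $L^1_\mu((0,\infty)_\xi,D)$; the inverse flux map $\phi^i\mapsto(\hat\rho^i,\hat u^i)$; and $(\rho,u)\mapsto M(\rho,u,\cdot)$, continuous into $L^1_\mu$ because the Maxwellian has locally uniformly bounded, compact $\xi$-support. For the three inequalities, the mass bound (\ref{eq:PsiMassConservation}) is immediate with the \emph{same} $b_0$: by (\ref{eq:DefHatPsi}) the zeroth flux component is preserved exactly, $\int_0^\infty\xi\,(\hat\Psi^i)_0\,\mathrm d\xi=\phi^i_0=\int_0^\infty\xi\,(\Psi^i)_0\,\mathrm d\xi$, so summing against $A^i$ and invoking (\ref{eq:PsiMassConservation}) for $\Psi$ gives the claim, with equality transferring likewise.

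For (\ref{eq:PsiEntropyConservation}) and (\ref{eq:KineticInvariancePsi}) the key is a half-space entropy-minimisation. For convex $S$ of class $C^1$ I would apply the subdifferential inequality (Proposition \ref{prop:SubdifferentialInequality}) at $(\rho,u)=(\hat\rho^i,\hat u^i)$ to $f=\Psi^i[t,g](\xi)\in D$, multiply by $\xi>0$ and integrate over $\xi>0$. Since $T_S(\hat\rho^i,\hat u^i)$ is independent of $\xi$ and $\int_0^\infty\xi\,(\Psi^i-\hat\Psi^i)\,\mathrm d\xi=0$ by construction, the linear term cancels and
\[
\int_0^\infty \xi\,H_S(\hat\Psi^i[t,g](\xi),\xi)\,\mathrm d\xi\;\le\;\int_0^\infty \xi\,H_S(\Psi^i[t,g](\xi),\xi)\,\mathrm d\xi ,
\]
the integrability being guaranteed by the quadratic bound on $S$ together with the finite-energy (and, under (\ref{eq:KineticInvariancePsi}), compact-support) control on $\Psi^i$. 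Taking $S(v)=v^2/2$ (so $H_S=H$ of (\ref{eq:defkineticenergy})) and $S=S_\omega^i$, then multiplying by $A^i$ and summing over $i$, the left-hand sides are dominated by the right-hand sides of (\ref{eq:PsiEntropyConservation}) resp.\ (\ref{eq:KineticInvariancePsi}) for $\Psi$; hence both transfer to $\hat\Psi$ with the same bound $b_H$. The vacuum case $\phi^i=0$ is covered since then $\hat\Psi^i=0$ and $H_S\ge0$ for the nonnegative choices $S=v^2/2$ and $S=S_\omega^i$. The remaining hypotheses of Theorem \ref{thm:ExistenceMacroscopicEquation} concern only the initial data and the codomain structure (\ref{eq:DefPsi}), which are unchanged, so all assumptions hold for $\hat\Psi$.
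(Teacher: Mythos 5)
Your proposal is correct and follows essentially the same route as the paper: the bijection of the half-flux map onto $\{\phi_0>0\}$ via monotonicity in the Riemann invariants, the entropy inequalities via the subdifferential inequality of Proposition \ref{prop:SubdifferentialInequality} with the linear term cancelling by construction of $(\hat\rho^i,\hat u^i)$, and continuity by combining (\ref{eq:ContPsi}) with continuity of the inverse flux map. The only presentational difference is that the paper makes explicit the truncation of $\hat\Psi$ to zero outside the kinetic invariant domains, which supplies the uniform $L^\infty$ bound needed to upgrade your ``composition of continuous maps'' to genuine $L^1_\mu$-continuity via dominated convergence.
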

\begin{proof}
To prove that $\hat\Psi$ is well-defined, we have to show that $\{f\in \mathbb R^2;\,f_0>0\}$ is in bijection with $\{(\rho,u)\in (0,\infty)\times\mathbb R);\,\omega_2(\rho,u)>0\}$ by $\int_0^\infty \xi \,M(\hat \rho,\hat u,\xi)\;\text{d}\xi=f$. One can prove this with monotonicity properties with respect to the Riemann invariants $\omega_1,\,\omega_2$. The entropy flux inequalities for $\hat\Psi$ follow from Proposition \ref{prop:SubdifferentialInequality}. Therefore, it remains to prove (\ref{eq:ContPsi}).\\
Since the obtained solution will only depend on $\Psi[t,g]$ with $g^i\in L_\mu^1((-\infty,0)_\xi,\tilde D_\xi^i)$, we can set $\hat\Psi$ to zero for $g^i\not \in L_\mu((-\infty,0)_\xi,\tilde D_\xi^i)$. We take $g^i_n\in L_\mu^1((0,\infty)_{\mathrm{loc},t}\times(-\infty,0)_\xi,\tilde D_\xi^i)$ converging to $g^i$ in $L_\mu^1((0,\infty)_{\mathrm{loc},t}\times(-\infty,0)_\xi,D)$. Proposition \ref{prop:SubdifferentialInequality} and (\ref{eq:KineticInvariancePsi}) imply that $\hat\Psi[t,g_n(t,\cdot)](\xi)$ is uniformly bounded in $L^\infty_\mu((0,\infty)_t\times(0,\infty)_\xi,D)^d$. Since Lebesque's theorem, it remains to prove point-wise convergence a.e. $t,\xi$. Since (\ref{eq:ContPsi}), we can take a subsequence such that $\Psi[t,g_n(t,\cdot)](\xi)\to \Psi[t,g(t,\cdot)](\xi)$ in $L^1_\mu((0,\infty)_\xi,D)^d$ for a.e. $t>0$. Since $f\mapsto (\hat\rho,\hat u)$ with $\int_0^\infty \xi \,M(\hat \rho,\hat u,\xi)\;\text{d}\xi=f$ is continuous on $\{f\in D;\,|f_1|\le A f_0\}$, we get $(\hat \rho_n,\hat u_n)(t)\to(\hat \rho,\hat u)(t)$ for a.e. $t>0$. This implies $\hat\Psi[t,g_n(t,\cdot)](\xi)\to \hat\Psi[t,g_n(t,\cdot)](\xi)$ a.e. $t,\xi$ and we get the result.
\end{proof}
\subsection{Linear Coupling Conditions}
Next, we introduce a simple class of linear coupling functions for $d\in \mathbb N$ pipelines. Let 
\begin{equation}\label{eq:defcij}
c^{ij}\ge 0 \text{ be such that } \sum_{j=1}^d c^{ij}=1 \text{ and }\sum_{i=1}^d A^i c^{ij}=A^j.
\end{equation}
Notice that the second condition is satisfied after possibly taking new $\tilde A^i$.
We define the coupling function by
\begin{equation}\label{eq:DefCouplingConditionLinear}
\Psi^{c,i}[t,g](\xi)=\sum_{j=1}^d  c^{ij} \begin{pmatrix}
g_0^j(-\xi)\\-g_1^j(-\xi)\end{pmatrix},\quad \xi>0.
\end{equation}
Furthermore, we fix a tuple $\mathcal S=(S^1,\dots,S^d)\in C^1(\mathbb R,\mathbb R^d)$ of convex functions with $|S^i(v)|\le B^i(1+v^2)$ and
\begin{equation}\label{eq:CouplingNetworkConditioncijandS}
A^{j} S^j(v)=\sum_{i=1}^d A^{i} c^{ij} S^i(-v),\quad \text{for every }v\in\mathbb R.
\end{equation}
Since (\ref{eq:defcij}), this condition is satisfied for $S^i(v)=S(v)$ and $S(v)=S(-v)$. Since Proposition \ref{prop:ConvexityofSandH} and the definition of $H_{S^i}$, we get
\begin{align*}
\sum_{i=1}^d A^i\int_0^\infty |\xi|\,H_{S^i}(\Psi^{c,i}[t,g](\xi),\xi)\;\text{d}\xi
&=\sum_{i=1}^d A^i\int_0^\infty |\xi|\,H_{S^i}(\sum_{j=1}^d c^{ij}\begin{pmatrix}g_0^j(-\xi)\\-g_1^j(-\xi)\end{pmatrix},\xi)\;\text{d}\xi\\
&\le\sum_{i=1}^d \sum_{j=1}^d A^{i} c^{ij}\int_0^\infty |\xi|\,H_{S^i}(\begin{pmatrix}g_0^j(-\xi)\\-g_1^j(-\xi)\end{pmatrix},\xi)\;\text{d}\xi\\
&=\sum_{i=1}^d  \sum_{j=1}^d A^{i} c^{ij}\int_{-\infty}^0 |\xi|\,H_{S^i}(\begin{pmatrix}g_0^j(\xi)\\-g_1^j(\xi)\end{pmatrix},-\xi)\;\text{d}\xi\\
&=\sum_{i=1}^d \sum_{j=1}^d  A^{i} c^{ij}\int_{-\infty}^0 |\xi|\,H_{S^i(-\cdot)}(g^j(\xi),\xi)\;\text{d}\xi\\
&=\sum_{j=1}^d A^j\int_{-\infty}^0 |\xi|\,H_{S^j}(g^{j}(\xi),\xi)\;\text{d}\xi,
\end{align*}
or equivalently
\begin{equation}\label{eq:EntropyFluxInequalityNetworkKinetic}
\sum_{i=1}^d A^i \int_{\mathbb R}\xi\,H_{S^i}(f^i_\epsilon(t,0,\xi),\xi)\;\text{d}\xi\le 0,\quad \text{a.e. }t>0,
\end{equation}
for every kinetic solution $f_\epsilon$ to $\Psi^c$ and every $\mathcal S$ with (\ref{eq:CouplingNetworkConditioncijandS}).
We set $S^i(v)=1$ and $S^i(v)=v^2/2$ in (\ref{eq:EntropyFluxInequalityNetworkKinetic}) and get conservation of mass and energy at the junction. 
After setting $\omega_{\operatorname{min}}^1=\dots=\omega_{\operatorname{min}}^d=-\omega_{\operatorname{max}}^1=\dots=-\omega_{\operatorname{max}}^d$ we apply Theorem \ref{thm:ExistenceMacroscopicEquation} and obtain for the macroscopic solution $(\rho^i,u^i)$:
\begin{equation}\label{eq:EntropyFluxInequalityNetworkMacro}
\sum_{i=1}^d A^i \overline{G_{S^i}(\rho^i,u^i)}(t,0)\le 0,\quad \text{a.e. }t>0,
\end{equation}
for every $\mathcal S$ with (\ref{eq:CouplingNetworkConditioncijandS}). 
\subsection{Convolutional Coupling Conditions}
\label{ConvolutionalCouplingConditions}
We present coupling conditions defined by a convolution operator. For $a^{ij}\in L^1_\mu((0,\infty)_\xi,L^\infty(-\infty,0)_{\xi'})$, $i,j=1,\dots,d$, we define
\begin{equation}
\Psi^{a,i}[t,g](\xi)=\sum_{j=1}^d\int_{-\infty}^0 |\xi'|\, a^{ij}(\xi,\xi')\begin{pmatrix}g_0^j(\xi')\\-g_1^j(\xi')\end{pmatrix}\;\text{d}\xi'.
\end{equation}
Notice that the limit case $a^{ij}(\xi,\cdot)=\tfrac{c^{ij}}{\xi} \delta_\xi(\cdot)$ gives the coupling function in (\ref{eq:DefCouplingConditionLinear}). In contrast to this special case and (\ref{eq:CouplingNetworkConditioncijandS}), we are not able to prove similar entropy flux inequalities under possibly additional restrictions on $\mathcal S$. Nevertheless, (\ref{eq:PsiMassConservation}) and a scaling argument imply $b_0=0$ and
\begin{equation}
\sum_{i=1}^d A^i \int_0^\infty \xi\, a^{ij}(\xi,\xi')\;\text{d}\xi=A^j,\quad \text{for all }\xi'<0.
\end{equation}
\subsection{Maxwellian Boundary Conditions}
This subsection is devoted to restoring the results from \cite{BeBo2002Boundary} for initial boundary value problems
\begin{equation}
\rho(t,0)=\rho^b(t),\quad \rho(t,0)u(t,0)=\rho^b(t)u^b(t),\quad t>0.
\end{equation}
Since the seminal paper by Dubois and LeFloch \cite{DuLe1988}, it is well-known that this problem is overdetermined and we have to use the weaker boundary conditions
\begin{equation}\label{eq:EntropyFluxInequalityBCMacro}
\overline{G_S(\rho,u)}-G_S(\rho^b,u^b)-\eta'_S(\rho^b,u^b)\cdot (\overline{F(\rho,u)}-F(\rho^b,u^b))\le 0,\quad \text{a.e. }t>0.
\end{equation}
We choose $d=1$ and 
\begin{equation}
\Psi^b[t,g](\xi)=M(\rho^b(t),u^b(t),\xi),\quad \xi>0,
\end{equation}
with $(\rho^b,u^b)\in L^\infty((0,\infty)_t,\tilde D^i)$. Proposition \ref{prop:SubdifferentialInequality} with equality for $\xi>0$ gives
\begin{align}
\begin{split}\label{eq:EntropyFluxInequalityBCKinetic}
\int_{\mathbb R}\xi\,H_S(f_\epsilon(t,0,\xi),\xi)\;\text{d}\xi\le& \int_{\mathbb R}\xi\,H_S(M(\rho^b(t),u^b(t),\xi),\xi)\;\text{d}\xi\\
&+T_S(\rho^b,u^b)\int_{\mathbb R}\xi\,(f_\epsilon(t,0,\xi)-M(\rho^b(t),u^b(t),\xi)\;\text{d}\xi,
\end{split}
\end{align}
for every kinetic solution $f_\epsilon$ to $\Psi^b$. The existence follows from Theorem \ref{thm:ExistenceMacroscopicEquation} and (\ref{eq:EntropyFluxInequalityBCMacro}) follows from (\ref{eq:EntropyFluxInequalityBCKinetic}) and Corollary \ref{cor:MonotonicityGammaLimit}.
\subsection{Solid Wall Boundary Conditions}
Solid wall boundary conditions can be modeled by the special case of (\ref{eq:DefCouplingConditionLinear}) with $d=1$ and $c^{11}=1$. The coupling function is 
\begin{equation}\label{eq:DefinitionSolidWall1}
\Psi^w[t,g](\xi)=\begin{pmatrix}g_0(-\xi)\\-g_1(-\xi)\end{pmatrix},\quad \text{for }\xi>0.
\end{equation}
The macroscopic boundary traces satisfy
\begin{align}
\overline{G_S(\rho,u)}(t,0)\le 0,\quad \text{a.e. } t>0,
\label{eq:MacroscopicSolidWallBC}
\end{align}
for every convex $S\in C^1(\mathbb R)$ with $S(v)=S(-v)$ and $|S(v)|\le B(1+v^2)$ for all $v\in \mathbb R$. In particular, we have
\begin{equation}
\overline{\rho u}(t,0)=0,\quad \text{a.e. } t>0.
\end{equation}
Another way to introduce solid wall boundary conditions is
\begin{align}
\begin{split}
&\Psi^{w'}[t,g](\xi)=M(\rho^{w},0,\xi),\quad \xi>0, \text{ where }\rho^w\ge 0 \text{ with}\\
&\int_0^\infty |\xi|\,M_0(\rho^{w},0,\xi)\;\text{d}\xi=\int_0^\infty |\xi|\,g_0(-\xi)\;\text{d}\xi. 
\end{split}
\end{align}
First notice that one can easily check that this definition is well-defined and different to the coupling condition (\ref{eq:DefHatPsi}) with $\Psi=\Psi^w$. Since Proposition \ref{prop:SubdifferentialInequality} and the definition of $\rho^w$, we have
\begin{align*}
\int_{\mathbb R}\xi\,H_S(f_\epsilon(t,0,\xi),\xi)\;\text{d}\xi&\le G_S(\rho^w(t),0)+T_S(\rho^w(t),0)\int_{-\infty}^0 \xi\,\Big(f(\xi)-M(\rho^w(t),0,\xi)\Big)\;\text{d}\xi\\
&=0,
\end{align*}
for every convex $S\in C^1(\mathbb R)$ with $S(v)=S(-v)$ and $|S(v)|\le B(1+v^2)$ for all $v\in \mathbb R$.
Again, we get (\ref{eq:MacroscopicSolidWallBC}) after applying Theorem \ref{thm:ExistenceMacroscopicEquation} with $\omega_1=-\omega_2$.
\subsection{Nozzles with discontinuous cross-sections}
Our results can be used to study pipelines or nozzles with discontinuous cross-section. Usually these problems are solved by an approach called non-conservative products introduced by Dal Maso, LeFloch and Murat \cite{DLM1995}, but these tools require $BV$-regularity of the solutions. We can tackle this problem by setting $d=2$ in the results of Section \ref{sec:MainResults} after a variable transformation on the second pipeline. For non-conservative products one has some freedom in picking different Lipschitz-paths, which give different coupling conditions at the discontinuity. We have a similar phenomenon in our approach: In most of the applications we expect $b_0=b_H=0$ in (\ref{eq:PsiMassConservation} -- \ref{eq:PsiEntropyConservation}) and equality in the mass constrained (\ref{eq:PsiMassConservation}). Now, we can use the arguments in the Subsections \ref{sec:MaxwellianCouplingConditions} -- \ref{ConvolutionalCouplingConditions} to construct many different coupling conditions which satisfy these assumptions. 
\section{Extensions and Outlook}
\label{sec:ExtensionsandOutlook}
\subsection{Networks with arbitrary many junctions}
We want to show how to deal with networks consisting of $m\in\mathbb N$ junctions and $d\in\mathbb N$ pipelines since some modifications are necessary. Notice, that networks with arbitrary many junctions may contain circles. These circles can possibly lead to circulations with increasing speed such that the speed goes to infinity after finite time. We will show that this does not occur if we assume to have kinetic invariant domains.\\
First, we introduce some new notation. A pipeline is modeled by a compact, non-empty interval $[a^i_-,a^i_+],\,i=1,\dots,d,\,a^i_\pm\in\mathbb R$ (\textit{Remark:} The following analysis can be extended to closed intervals). Every pipeline is connected to exactly one junction at each end $a^i_-$ and $a^i_+$ and the functions $\theta_-,\theta_+\colon \{1,\dots,d\}\to \{1,\dots,m\}$ give the junctions at $a_-$ and $a_+$. The sets $T_-(k),T_+(k)\subset\{1,\dots,d\}$ are the sets of pipelines $i$ which are connected to the junction $k=1,\dots,m$ at $a^i_-$ and $a^i_+$ or equivalently $T_\pm(k)=\theta_\pm^{-1}(\{k\})$. Sometimes we use the index $\pm$ to treat the cases $+$ and $-$ together and we write $\sum_\pm$ for the sum of both cases.\\
We couple the kinetic solutions $f^i$ by
\begin{align}
\begin{split}
f^i(t,a^i_-,\xi)=\Psi_-^{\theta_-(i)}[t,f^j(t,a^j_\pm,\cdot); \,j=1,\dots,d](\xi),\quad \xi>0,\\
f^i(t,a^i_+,\xi)=\Psi_+^{\theta_+(i)}[t,f^j(t,a^j_\pm,\cdot); \,j=1,\dots,d](\xi),\quad \xi<0.
\end{split}
\end{align} 
The coupling functions $\Psi^{k}$ are defined by
\begin{align}\label{eq:defPsimorethanonejunction}
\begin{split}
\Psi^k\colon (0,\infty)_t\times L_\mu^1((-\infty,0)_\xi,D)^{d}\times L_\mu^1((0,\infty)_\xi,D)^{d}&\to L_\mu^1((0,\infty)_\xi,D)^{d}\times L_\mu^1((-\infty,0)_\xi,D)^{d};\\
[t,g_-,g_+]&\mapsto (\Psi^k_-,\Psi^k_+),\\
\text{where $\Psi^k$ depends only on $g_\pm^i$ with $\theta_\pm(i)=k$}& \text{ and $\Psi^{k,i}_{\pm}[t,g_-,g_+]=0$ if $\theta_\pm(i)\neq k$}.
\end{split}
\end{align}
They satisfy the continuity property:
\begin{equation}
\begin{split}
L_\mu^1((0,\infty)_{\mathrm{loc},t}\times &(-\infty,0)_\xi,D)^{d}\times L_\mu^1((0,\infty)_{\mathrm{loc},t}\times(0,\infty)_\xi,D)^{d}\to\\
L_\mu^1((0,\infty)_{\mathrm{loc},t}&\times(0,\infty)_\xi,D)^{d}\times L_\mu^1((0,\infty)_{\mathrm{loc},t}\times(-\infty,0)_\xi,D)^{d};\\
g\mapsto \Big((t,\xi)&\mapsto(\Psi_-,\Psi_+)[t,g_-(t,\cdot),g_+(t,\cdot)](\xi)\Big)\quad \text{ is continuous.}
\end{split}\label{eq:ContPsimorethanonejunction}
\end{equation}
The conditions (\ref{eq:PsiMassConservation}), (\ref{eq:PsiEntropyConservation}) and (\ref{eq:KineticInvariancePsi}) can be generalized in the following way. There exists $b_{\mathcal S_-,\mathcal S_+}^k\in L^1_{\mathrm{loc},t}(0,\infty)$ such that 
\begin{align}\label{eq:ConditionsMassEntropyJunctionGeneralNetwork}
\begin{split}
\sum_{i=1}^{d}A^i\int_0^\infty |\xi| H_{S^i_-}(\Psi_-^{k,i}[t,g_-,g_+](\xi)\;\text{d}\xi+\sum_{i=1}^{d}A^i\int_{-\infty}^0 |\xi| H_{S^i_+}(\Psi_+^{k,i}[t,g_-,g_+](\xi)\;\text{d}\xi\\
\le \sum_{i=1}^d A^i \int_{-\infty}^0 |\xi| H_{S^i_-}(g_-(\xi),\xi)\;\text{d}\xi+\sum_{i=1}^d A^i \int_0^\infty |\xi| H_{S^i_+}(g_+(\xi),\xi)\;\text{d}\xi+b_{\mathcal S_-,\mathcal S_+}^k(t),
\end{split}
\end{align}
for a.e. $t\in (0,\infty)$, for $\mathcal S_-=\mathcal S_+ \in \{\mathcal S_0=(1,\dots,1),\,\mathcal S_H=(v^2/2,\dots,v^2/2),\,\mathcal S_\omega=(S_\omega^1,\dots,S_\omega^d)\}$, where $S_\omega^i(v)=(v-\omega_{\max}^i)_+^2+(\omega_{\min}^i-v)_+^2$, $b_{\mathcal S_\omega,\mathcal S_\omega}=0$ and $(g_-,g_+)\in  L_\mu^1((-\infty,0)_\xi,D)^d\times L_\mu^1((0,\infty)_\xi,D)^d$.
\begin{theorem}
Let $f^{0,i}\in L^1((a^i_-,a^i_+)_x\times \mathbb R_\xi,\tilde D_\xi^i)$ with $\iint_{(a^i_-,a^i_+)\times\mathbb R}H(f^{0,i}(x,\xi),\xi)\;\text{d}x\text{d}\xi<\infty$. Let $\Psi$ be such that (\ref{eq:ContPsimorethanonejunction}) holds and (\ref{eq:ConditionsMassEntropyJunctionGeneralNetwork}) holds for $\mathcal S_-=\mathcal S_+\in \{\mathcal S_1,\mathcal S_H,\mathcal S_\omega\}$, $b_{\mathcal S_\omega,\mathcal S_\omega}=0$.
Then, there exist coupled BGK solutions $f^i_\epsilon$ to $\Psi$ for every $\epsilon>0$.
After passing if necessary to a subsequence $(\rho_\epsilon^i,\rho_\epsilon^i u_\epsilon^i)$ converge a.e. to an entropy solution $(\rho^i,\rho^i u^i)$ to the isentropic gas equations with initial data $(\rho^{0,i},\rho^{0,i} u^{0,i})=\int_\mathbb R f^{0,i}\text{d}\xi$. \\
Furthermore, after passing if necessary to a subsequence again, we have
\begin{equation}
\pm \overline{G_S(\rho^i,u^i)}(t,a_\pm^i)\ge \pm\psi_S^{i,\pm}(t):=\pm \underset{\epsilon\to 0}{\operatorname{w*-lim}}\int_\mathbb R \xi\,H_S(f_\epsilon(t,a_\pm^i,\xi),\xi)\;\text{d}\xi
\end{equation}
a.e. $t>0$, where $S\colon \mathbb R\to\mathbb R$ is convex, of class $C^1$ and $|S(v)|\le B(1+v^2)$ for a constant $B$. 
\end{theorem}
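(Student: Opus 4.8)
The plan is to lift the three single-junction results, namely existence of the coupled BGK solutions (Theorem~\ref{thm:ExistenceKineticModel}), the maximum principle (Theorem~\ref{thm:MaximumPrinciple}) and the relaxation limit (Theorem~\ref{thm:ExistenceMacroscopicEquation}), to the network, handling the three assertions of the statement in turn. Because both the maximum principle and the compensated compactness argument are local in $x$, the only genuinely new features stem from the facts that each pipeline is now a bounded interval $[a^i_-,a^i_+]$ and that both endpoints carry a coupled inflow condition, so that a characteristic may traverse a pipeline and feed into a neighbouring junction, possibly around a cycle.

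\emph{Step 1 (existence of the BGK solutions).} I would redo the Tychonoff--Schauder fixed-point construction of Section~\ref{sec:SolutionToTheBGKModel}. The characteristics formula of Lemma~\ref{lemma:SolutionLinearProblem} must be rewritten on the bounded interval: a characteristic reaching $(t,x)$ with velocity $\xi$ originates either from the initial datum, if it does not meet a lateral boundary before time $t$, or from the inflow datum at $a^i_-$ (for $\xi>0$) or at $a^i_+$ (for $\xi<0$). The admissible sets $C$, $\tilde C$ and the map $F$ carry over once the mass and energy constraints are summed over all pipelines and the production terms $b^k_{\mathcal S_-,\mathcal S_+}$ from (\ref{eq:ConditionsMassEntropyJunctionGeneralNetwork}) are inserted at each junction. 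The decisive ingredient is the invariance assumption (\ref{eq:ConditionsMassEntropyJunctionGeneralNetwork}) with $\mathcal S_\omega$ and $b_{\mathcal S_\omega,\mathcal S_\omega}=0$: by Lemma~\ref{lemma:CharacterizationKineticInvariantDomainsEntropy} it forces $f^i_\epsilon\in\tilde D^i_\xi$, hence $\operatorname{supp}_\xi f^i_\epsilon\subset[\omega^i_{\min},\omega^i_{\max}]$ exactly as in Proposition~\ref{prop:UniformLinftyBoundsinEpsilon}. This uniform bound on the transport velocities is what keeps the circulation speed from blowing up in finite time, so that all mass and energy estimates remain finite on every $[0,T]$ and the analogues of Lemma~\ref{lemma:FinTildeC} and Proposition~\ref{prop:Stability} go through with only notational changes.

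\emph{Steps 2 and 3 (relaxation and boundary traces).} The uniform $L^\infty$-bounds and the interior relaxation analysis are purely local, so I would apply them pipeline by pipeline: the kinetic entropy dissipation identity, the splitting into $Q^i_{S,\epsilon}$ and $R^i_{S,\epsilon}$, and the compensated compactness theorem of \cite{LPS1996} yield, after extraction, a.e. convergence of $(\rho^i_\epsilon,\rho^i_\epsilon u^i_\epsilon)$ on $(a^i_-,a^i_+)$ to an entropy solution remaining in $\tilde D^i$. For the trace inequalities I would repeat the argument of Proposition~\ref{prop:InequalityTraces} at each endpoint, localising the weak trace theorem for $\operatorname{div}$-measure fields (Theorem~\ref{thm:WeakTraceDivMeasure}) near $a^i_-$ and near $a^i_+$. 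The only change is the orientation of the outward normal: at $a^i_+$ it points in the $+x$ direction, which flips the sign of the boundary flux term and therefore the direction of the inequality, which is precisely the content of the $\pm$ notation in the statement.

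\emph{Main obstacle.} The hard part is the well-posedness and compactness of the fixed-point map in the presence of cycles. The inflow datum at a junction is $\Psi$ applied to the outflow traces at the same junction, and those outflow traces depend, through transport across the incident pipelines, on the inflow data at the \emph{neighbouring} junctions; with a cycle this closes into a loop, so one faces a genuinely coupled system of boundary conditions rather than a single half-space problem. The resolution is again the uniform velocity bound together with the positive length of each interval: every characteristic needs a time bounded below by some $\tau>0$ to cross a pipeline, so the traces at time $t$ depend only on traces at times $\le t-\tau$. This causality removes any instantaneous feedback around a cycle and lets one close the coupling by marching in steps of length $\tau$, while the continuity assumption (\ref{eq:ContPsimorethanonejunction}) together with the Jensen-type energy estimates of Lemma~\ref{lemma:FinTildeC} supply the bounds needed to verify $F(\tilde C)\subset\tilde C$ and the relative compactness of $F(\tilde C)$. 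With this in hand the Tychonoff--Schauder theorem applies as before.
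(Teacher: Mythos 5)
Your proposal follows essentially the same route as the paper: the paper likewise reduces everything to the single-junction arguments and modifies exactly the two points you single out, namely (i) the stability estimate for the boundary traces, closed by iterating the characteristics formula $\lceil T/\Delta\rceil$ times with the minimal crossing time $\Delta=\inf_i (a^i_+-a^i_-)/\max\{|\omega^i_{\min}|,|\omega^i_{\max}|\}$ supplied by the kinetic invariant domains (your $\tau$-marching/causality argument), and (ii) the Jensen-type energy estimate summed over both endpoints using (\ref{eq:ConditionsMassEntropyJunctionGeneralNetwork}) with $S^i_\pm=v^2/2$. The interior relaxation and the sign-flipped trace inequality at $a^i_+$ are handled exactly as you describe.
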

\begin{proof}
We use the same arguments as we used to prove Theorem \ref{thm:ExistenceMacroscopicEquation}, but we have to modify two parts.\\
\underline{Part 1:} The first part is the stability estimate after (\ref{eq:stabilityestimatedifferentformorethanonejunction}). Since 
\begin{equation*}
M[g_n^i]\to M[g^i] \quad \text{in }L^1((0,T)_t\times (a_-^i,a_+^i)_{\mathrm{(loc)},x}\times \mathbb R_\xi),
\end{equation*}
we can handle all integrals containing $|M[g_n^i]-M[g^i]|$ easily and just denote all of them by $\delta(M[g_n])$ for simplicity. By the characteristics formula, we have
\begin{align*}
&\iint_{(a_-^i,b_+^i)\times\mathbb R}|F^i(g_n)-F^i(g)|(t,x,\xi)\,\mathbbm 1_{\pm \xi<0}\;\text{d}x\text{d}\xi\\
&\le \iint_{(0,t)\times\mathbb R} |\xi|\,|\Psi_\pm^{\theta_\pm(i)}[t,F(g_n)(t,a^i_\pm,\cdot)]-\Psi_\pm^{\theta_\pm(i)}[t,F(g)(t,a^i_\pm,\cdot)]|(\xi)\,\mathbbm 1_{\{\pm \xi<0\}}\,\text{d}x\text{d}\xi+\delta(M[g_n]),
\end{align*}
Since (\ref{eq:ContPsimorethanonejunction}), it remains to prove
\begin{align*}
\sum_{i=1}^d \sum_{\pm} \iint_{(0,T)\times\mathbb R} |\xi|\,|F(g_n)(t,a^i_\pm,\xi)-F(g)(t,a^i_\pm,\xi)|\,\mathbbm 1_{\{\pm \xi>0\}}\,\text{d}x\text{d}\xi\to 0.
\end{align*}
Because we assumed to have kinetic invariant domains, we get by the characteristics formula
\begin{align}
&\sum_{i=1}^d \sum_{\pm} \iint_{(0,T)\times\mathbb R} |\xi|\,|F(g_n)(t,a^i_\pm,\xi)-F(g)(t,a^i_\pm,\xi)|\,\mathbbm 1_{\{\pm \xi>0\}}\,\text{d}x\text{d}\xi \label{eq:morethanonejunctioncontinuityhilf}\\
&\le \sum_{i=1}^d \sum_{\pm}\iint_{(0,T-\Delta)\times\mathbb R} |\xi|\,|\Psi_\pm^{\theta_\pm(i)}[t,F(g_n)(t,a^i_\pm,\cdot)]-\Psi_\pm^{\theta_\pm(i)}[t,F(g)(t,a^i_\pm,\cdot)]|(\xi)\,\mathbbm 1_{\{\pm \xi<0\}}\,\text{d}x\text{d}\xi+\delta(M[g_n]),\nonumber
\end{align}
with
\begin{equation}
\Delta=\inf_{i} \frac{a_+^i-a_-^i}{\max\{|\omega_{\min}^i|,|\omega_{\max}^i|\}}>0.
\end{equation}
We do $\lceil T/\Delta\rceil$ iterations of the estimate (\ref{eq:morethanonejunctioncontinuityhilf}) and use (\ref{eq:ContPsimorethanonejunction}) to prove the desired stability result.\\
\underline{Part 2:} Additionally, we have to modify the estimates (\ref{eq:EstimateHwithBC1} -- \ref{eq:EstimateHwithBC3}). By the characteristics formula and Jensen's inequality, we get
\begin{align*}
&\sum_{i=1}^d A^i \iint_{(a^i_-,a^i_+)\times\mathbb R}H(F^i(g)(t,x,\xi),\xi)\;\text{d}x\text{d}\xi\\
&\quad+\sum_{i=1}^d\sum_\pm A^i\iint_{(0,t)\times\mathbb R}|\xi| H(F^i(g)(s,a^i_\pm,\xi),\xi)\,e^{(s-t)/\epsilon}\mathbbm 1_{\{\pm\xi>0\}}\;\text{d}t\text{d}\xi\\
&\le \sum_{i=1}^d A^i \iint_{(a^i_-,a^i_+)\times\mathbb R}H(f^{0,i}(t,x,\xi),\xi)e^{-t/\epsilon} \;\text{d}x\text{d}\xi\\
&\quad+\sum_{i=1}^d\sum_\pm A^i\iint_{(0,t)\times\mathbb R}|\xi| H(F(g)(s,a^i_\pm,\xi),\xi)\,e^{(s-t)/\epsilon}\mathbbm 1_{\{\pm\xi<0\}}\;\text{d}t\text{d}\xi\\
&\quad+\sum_{i=1}^d \frac{A^i}{\epsilon}\iiint_{(0,t)\times (a_-^i,a_+^i)\times \mathbb R} H(M[g^i](s,x,\xi),\xi)e^{(s-t)/\epsilon}\;\text{d}s\text{d}x\text{d}\xi.
\end{align*}
The entropy bound (\ref{eq:ConditionsMassEntropyJunctionGeneralNetwork}) with $S^i_\pm=v^2/2$ and integration by parts imply
\begin{align*}
&\sum_{i=1}^d A^i \iint_{(a^i_-,a^i_+)\times\mathbb R}H(F^i(g)(t,x,\xi),\xi)\;\text{d}x\text{d}\xi\\
&\le  \sum_{i=1}^d A^i \iint_{(a^i_-,a^i_+)\times\mathbb R}H(f^{0,i}(t,x,\xi),\xi)e^{-t/\epsilon} \;\text{d}x\text{d}\xi+\int_0^t b_{\mathcal S_H,\mathcal S_H}(s)\;\text{d}s.
\end{align*}
But this is the generalized version of (\ref{con:C2}) and we get the result.
\end{proof}\pagebreak
\begin{remark}~
\begin{itemize}
\item We used the kinetic invariant domains in part 1 of the proof. Therefore, the generalized version of Theorem \ref{thm:ExistenceKineticModel} is weaker than the original one.
\item We give a generalization of Corollary \ref{cor:MonotonicityGammaLimit}: Let $p\in \mathbb N$, $i_l\in\{1,\dots,d\}$, $S_l\colon\mathbb R\to\mathbb R$ convex, of class $C^1$ with $|S_l(v)|\le B_l(1+v^2)$, $l=1,\dots,p$.
Let $\Gamma\colon (0,\infty)_t\times \mathbb R^p \times \mathbb R^p\to \mathbb R$ be such that $\Gamma[t,\cdot,\cdot]$ is uniformly bounded on compact sets. Furthermore, let $\Gamma[t,\cdot,G_+]$ (resp. $\Gamma[t,G_-,\cdot]$) be increasing (resp. decreasing) in every argument with $S_l\notin \operatorname{span}\{\mathbbm 1,v\}$. Then, 
\begin{equation}
\Gamma[t,\overline{G_{S_1}(\rho^{i_1},u^{i_1})}(t,a_\pm^{i_1}),\dots,\overline{G_{S_p}(\rho^{i_p},u^{i_p})}(t,a_\pm^{i_p})]\le \Gamma[t,\psi_{S_1}^{i_1,\pm}(t),\dots,\psi_{S_p}^{i_p,\pm}(t)]\le b_{\Gamma, \mathcal S}(t),\label{eq:CouplingConditionmorethanonejunctionGamma}
\end{equation}
a.e. $t>0$, where $b_{\Gamma, \mathcal S}\in L^\infty_t(0,\infty)$ depends only on $\Psi$
\item Notice that (\ref{eq:CouplingConditionmorethanonejunctionGamma}) can be decomposed to local inequalities at the juncions $k=1,\dots,m$.
\end{itemize}
\end{remark}
\subsection{Further Generalizations and Outlook}
\subsubsection*{Non-local in time coupling conditions}
We considered coupling conditions local in time, what means that $\Psi[t,f_\epsilon]$ depend only on $f_\epsilon(t,0,\xi)$. Our arguments can be adapted to the more general case that $\Psi[t,f_\epsilon]$ depend on $f_\epsilon(s,0,\xi),\,s\in[0,t]$. This allows to model the case that gas entering the junction at a certain time leaves the junction at a later time. It turns out that (\ref{eq:PsiMassConservation} -- \ref{eq:PsiEntropyConservation}) and (\ref{eq:KineticInvariancePsi}) are still sufficient to prove convergence. Notice that $b_{\mathcal S}(t)$ in (\ref{eq:DefboundbS}) is a bound for the entropy leaving at $t$, but enters the junction at an possibly earlier time. Therefore, $b_{\mathcal S}$ can be a very bad bound and it seems to be necessary to introduce more precise conditions. Otherwise we can not expect to get (in some sense) uniqueness for the macroscopic problem.
\subsubsection*{Omitting the $L^\infty$-bounds}
As shown in \cite{LeWe2007}, it is possible to omit the $L^\infty$-bounds on the initial data to get existence of finite mass and energy solutions to the isentropic gas equations on the full line (with $1<\gamma<5/3$). The solutions are constructed by the limit of solutions with bounded initial data. The key problem in adapting these techniques to networks is to approximate the coupling condition $\Psi$ by $\Psi^n$, where $\Psi^n$ admits a family of kinetic invariant domains. In some cases we get this naturally by setting $\Psi^n=\Psi$ (for example (\ref{eq:DefCouplingConditionLinear})). Furthermore, we need a generalization of Theorem \ref{thm:WeakTraceDivMeasure} for equi-integrable solutions.
\subsubsection*{Outlook}
Our results could be used to study and justify numerical methods which use the kinetic BGK model. Furthermore, these techniques could be adapted to other hyperbolic equations with kinetic models, but notice that the rich family of entropies is very important to pass to the macroscopic limit. 
We obtained entropy-flux inequalities at the junction for our macroscopic limit. It is an interesting question if these inequalities ensure uniqueness of the solutions or at least in some special cases. On the other hand one could study if different kinetic coupling conditions converge to the same macroscopic limit and one could try to characterize the obtained equivalence classes.

\section{Appendix}
\label{sec:Appendix}
We recall an existence result for weak traces of divergence measure fields \cite{An1983,ChFr1999}.
\begin{theorem}\label{thm:WeakTraceDivMeasure}
Let $V=(V_0,V_1)\in L^\infty((0,\infty)_t\times(0,\infty)_x)$ be a vector field such that $\operatorname{div}_{t,x}V\in\mathcal M((t_1,t_2)_t\times (0,R)_x)$ for any $0<t_1<t_2<\infty$ and $R>0$. Then there exists a unique solution $\overline V_1\in L^\infty_t(0,\infty)$ to
\begin{gather}
-\iint_{(0,\infty)^2}\varphi \operatorname{div}V-\iint_{(0,\infty)^2}V_0\,\partial_t \varphi\;\text{d}t\text{d}x-\iint_{(0,\infty)^2}V_1\,\partial_x\varphi\;\text{d}t\text{d}x\nonumber\\
-\int_0^\infty \overline V_1 \,\varphi(t,0)\;\text{d}t=0
\end{gather}
for any $\varphi\in \mathcal D((0,\infty)_t\times [0,\infty)_x)$. In fact $\overline V_1$ depends only on $V_1$ and satisfies $\norm{\overline V_1}_{L^\infty}\le\norm{V_1}_{L^\infty}$.
\end{theorem}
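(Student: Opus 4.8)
The plan is to read the asserted identity as a representation theorem for the linear functional
\[
L(\varphi):=-\iint_{(0,\infty)^2}\varphi\,\operatorname{div}V-\iint_{(0,\infty)^2}V_0\,\partial_t\varphi\;\mathrm{d}t\mathrm{d}x-\iint_{(0,\infty)^2}V_1\,\partial_x\varphi\;\mathrm{d}t\mathrm{d}x,
\]
defined for $\varphi\in\mathcal D((0,\infty)_t\times[0,\infty)_x)$, and to show that it is given by integration of some $\overline V_1\in L^\infty_t(0,\infty)$ against the boundary trace $\varphi(\cdot,0)$. I would proceed in three steps: first show that $L(\varphi)$ depends only on $g:=\varphi(\cdot,0)\in\mathcal D((0,\infty)_t)$; then establish the bound $\abs{L(\varphi)}\le\norm{V_1}_{L^\infty}\norm{g}_{L^1}$; and finally invoke the Riesz representation of the dual of $L^1$. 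Throughout I use that $\operatorname{div}V$ is a Radon measure on the open quarter plane, finite on every $(t_1,t_2)_t\times(0,R)_x$.

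For the first step, suppose $\varphi(\cdot,0)=0$ and insert a cutoff $\chi_h(x)$ with $\chi_h\equiv0$ on $\{x\le h\}$, $\chi_h\equiv1$ on $\{x\ge 2h\}$ and $\abs{\chi_h'}\le C/h$. Since $\chi_h\varphi$ is compactly supported in the open quarter plane, the very definition of $\operatorname{div}V$ as a distributional divergence gives $L(\chi_h\varphi)=0$. Letting $h\to0$, the contributions of $L((1-\chi_h)\varphi)$ supported on the shrinking strip $\{x\le 2h\}$ vanish, because both $\abs{\operatorname{div}V}$ and the Lebesgue measure of that strip (intersected with $\operatorname{supp}\varphi$) tend to $0$; the only dangerous term is $\iint V_1\,\chi_h'\,\varphi$, which I control by the Lipschitz estimate $\abs{\varphi(t,x)}\le\norm{\partial_x\varphi}_{L^\infty}\,x\le 2h\norm{\partial_x\varphi}_{L^\infty}$ valid on $\operatorname{supp}\chi_h'\subset\{h\le x\le 2h\}$. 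This beats the factor $C/h$ and leaves an $O(h)$ bound, so $L(\varphi)=\lim_{h\to0}L(\chi_h\varphi)=0$. Consequently $L$ factors through the boundary trace $g$.

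For the bound I fix a monotone profile $\beta\in C^\infty([0,\infty))$ with $\beta(0)=1$, $\operatorname{supp}\beta\subset[0,1]$ and $\beta'\le0$, and test with $\varphi_\delta(t,x):=g(t)\,\beta(x/\delta)$, which has boundary trace $g$ for every $\delta>0$; by the previous step $L(\varphi_\delta)=L(\varphi)$ is independent of $\delta$. In $L(\varphi_\delta)$ the measure term and the $V_0$ term are supported in $\{x\le\delta\}$ and tend to $0$ as $\delta\to0$, while
\[
\abs{\iint_{(0,\infty)^2}V_1\,g(t)\,\tfrac1\delta\beta'(x/\delta)\;\mathrm{d}t\mathrm{d}x}\le\norm{V_1}_{L^\infty}\norm{g}_{L^1}\int_0^\infty\abs{\beta'(y)}\;\mathrm{d}y=\norm{V_1}_{L^\infty}\norm{g}_{L^1},
\]
since $\int_0^\infty\abs{\beta'}=\beta(0)-\beta(\infty)=1$ by monotonicity. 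Letting $\delta\to0$ yields $\abs{L(\varphi)}\le\norm{V_1}_{L^\infty}\norm{g}_{L^1}$. By density of $\mathcal D((0,\infty)_t)$ in $L^1$ and the identification $(L^1)^*=L^\infty$, there is a unique $\overline V_1\in L^\infty_t(0,\infty)$ with $\norm{\overline V_1}_{L^\infty}\le\norm{V_1}_{L^\infty}$ and $L(\varphi)=\int_0^\infty\overline V_1\,g\;\mathrm{d}t$, which is exactly the claimed identity; uniqueness is immediate since $\int_0^\infty(\overline V_1-\overline V_1')\,g\;\mathrm{d}t=0$ for all $g\in\mathcal D((0,\infty)_t)$. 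Finally, because only the $V_1$-term survives the limit $\delta\to0$, $\overline V_1$ is the weak-$*$ limit in $L^\infty_t$ of $t\mapsto-\int_0^\infty V_1(t,x)\tfrac1\delta\beta'(x/\delta)\;\mathrm{d}x$, so it depends only on $V_1$.

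I expect the main obstacle to be the first step, namely showing that $L$ only sees the boundary trace: the term $\iint V_1\,\chi_h'\,\varphi$ involves a derivative $\chi_h'$ of size $1/h$, and the required gain comes entirely from combining the vanishing of $\varphi$ at $x=0$ with its Lipschitz bound in $x$. The quantitative estimate with the monotone $\beta$ (so that $\int\abs{\beta'}=1$) is what pins down the sharp constant $\norm{V_1}_{L^\infty}$; the remaining measure-theoretic and density arguments are routine.
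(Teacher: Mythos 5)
Your proof is correct. Note, however, that the paper itself does not prove this statement: Theorem \ref{thm:WeakTraceDivMeasure} is explicitly \emph{recalled} from the literature on divergence-measure fields (Anzellotti; Chen--Frid), so there is no in-paper argument to compare against. What you have written is essentially the standard normal-trace construction from those references: (i) the functional $L$ annihilates test functions vanishing on $\{x=0\}$, via a cutoff $\chi_h$ supported away from the boundary, where the dangerous term $\iint V_1\chi_h'\varphi$ is tamed by the Lipschitz bound $\abs{\varphi(t,x)}\le x\norm{\partial_x\varphi}_{L^\infty}$; (ii) testing with $g(t)\beta(x/\delta)$ and using $\int_0^\infty\abs{\beta'}=\beta(0)=1$ gives the sharp bound $\abs{L}\le\norm{V_1}_{L^\infty}\norm{g}_{L^1}$; (iii) duality $(L^1)^*=L^\infty$ produces $\overline V_1$ and its norm bound, and the surviving term in the $\delta\to0$ limit shows $\overline V_1$ is the weak-$*$ limit of $-\int_0^\infty V_1(t,x)\delta^{-1}\beta'(x/\delta)\,\mathrm dx$, hence depends only on $V_1$. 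All the limiting steps are justified by the finiteness of $\abs{\operatorname{div}V}$ on $(t_1,t_2)\times(0,R)$ (continuity from above on the shrinking strips) together with the $L^\infty$ bounds on $V$; uniqueness follows since every $g\in\mathcal D((0,\infty)_t)$ is realized as a boundary trace. This is a complete and self-contained substitute for the citation, with no gaps.
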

%
\bibliographystyle{plain}
\bibliography{99_Bibliography}
\end{document}